\theoremstyle{plain}
\newtheorem{thm}{Theorem}
\newtheorem{cor}{Corollary}
\newtheorem{lem}{Lemma}
\newtheorem{prop}{Proposition}
\theoremstyle{definition}
\newtheorem{rem}{Remark}
\newtheorem{example}{Example}
\newtheorem{prob}{Problem}
\newtheorem{quest}{Question}
\newtheorem{conj}{Conjecture}
\def\cprime{$'$}
\newcommand{\C}{{\mathbb C}} 
\newcommand{\R}{{\mathbb R}} 
\newcommand{\abs}[1]{{\left| {#1} \right|}} \newcommand{\p}[1]{{\left(
      {#1} \right)}}
\renewcommand{\Re}{\operatorname{Re}} 
\renewcommand{\Im}{\operatorname{Im}}
 \newcommand{\Oh}[1]{{O \p{#1}}}
\begin{document}

\author{Johan Andersson\thanks{Department of Mathematics, Stockholm University, SE-106 91 Stockholm SWEDEN, Email: {\texttt {johana@math.su.se}}.}}

\date{}
 
\title{On  the zeta function on the line $\Re(s)=1$.}

\maketitle

\begin{abstract}
  We show  the estimates
  \begin{gather*}\inf_T \int_T^{T+\delta} |\zeta(1+it)|^{-1} dt =\frac{ e^{-\gamma}} 4 \delta^{2}+ \Oh{\delta^4}, \qquad (\delta>0),\\ \intertext{and}
 \inf_T \int_T^{T+\delta} |\zeta(1+it)| dt =\frac{\pi^2 e^{-\gamma}} {24} \delta^{2}+ \Oh{\delta^4}, \qquad (\delta>0),
\end{gather*}
  as   well as corresponding results for sup-norm, $L^p$-norm and other zeta-functions such as the  Dirichlet $L$-functions and certain Rankin-Selberg $L$-functions.
This improves on previous work of Balasubramanian and Ramachandra  for small values of $\delta$ and we remark that it implies that the zeta-function is not universal on the line $\Re(s)=1$.  We also use recent results of Holowinsky (for Maass wave forms) and  Taylor et al. (Sato-Tate for holomorphic cusp forms) to prove lower bounds for the corresponding integral with the Riemann zeta-function replaced with Hecke $L$-functions and  with $\delta^2$ replaced by $\delta^{11/12+\epsilon}$  and $\delta^{8/(3 \pi)+\epsilon}$ respectively.
\end{abstract}

\tableofcontents
\section{Introduction}
\subsection{Classical order and omega estimates}
The importance of studying  the Riemann zeta-function zeta-function on the line $\Re(s)=1$ was first realized by Von Mangoldt who proved in 1895 that $\zeta(1+it) \neq 0$  implies the prime number theorem. Hadamard \cite{Hadamard} and de la Vall{\'e}e-Poussin \cite{Poussin} shortly managed to prove this result independently.  Since then the distribution of the zeta-values on the line $\Re(s)=1$ has been studied by a lot of authors. For the general theory of the Riemann zeta-function, see for example the monographs of Ivi{\'c} \cite{Ivic} and Titchmarsh \cite{Titchmarsh}. Bohr \cite{Bohr} proved that the values $\zeta(1+it)$ are dense in $\C$. Assuming the Riemann hypothesis,  Littlewood \cite{Littlewood} showed that
\begin{gather} \label{A1}
 \zeta(1+it) \ll \log \log t, \qquad  \zeta(1+it)^{-1} \ll \log \log t. \\ \intertext{Bohr and Landau \cite{BohrLandau1,BohrLandau2,BohrLandau3} proved the corresponding omega-estimates} \label{A2}
 \zeta(1+it)=\Omega(\log \log t), \qquad  \zeta(1+it)^{-1}=\Omega(\log \log t),
\end{gather}
unconditionally, so Littlewood's conditional bound should be the best possible. The best unconditional bound are the estimates
\begin{gather*}
 \zeta(1+it) \ll (\log t)^{2/3}, \qquad  \zeta(1+it)^{-1} \ll ( \log t)^{2/3} (\log \log t)^{1/3},
\end{gather*}
of Vinogradov \cite{Vinogradov} and Korobov \cite{Korobov}. For recent improvements and the best known constants in these estimates see the paper of Granville-Soundararajan \cite{Sound}. For related results in this direction, see also Hildebrink \cite{Hildebrink} and Lamzouri \cite{Lamzouri}.
 
\subsection{Universality}
One interesting property for the Riemann zeta-function on lines $\Re(s)=\sigma$ for $1/2<\sigma<1$ is that of universality:

\begin{thm}
 Let $f$ be a continuous function on the interval $[0,H]$ and let $1/2<\sigma<1$. Then for any $\epsilon>0$ there exists a $T$ such that
\begin{gather*} \max_{t \in [0,H]}\abs{f(t)-\zeta(\sigma+iT+it)}<\epsilon.\end{gather*}
\end{thm}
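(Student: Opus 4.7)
\bigskip

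\noindent\emph{Proof proposal.} The plan is to follow Voronin's original argument, which factors through three distinct layers: (i) an Euler-product/Dirichlet-polynomial approximation of $\zeta$, (ii) a rearrangement theorem in a suitable Hilbert space, and (iii) a Kronecker--Weyl equidistribution argument that transfers the resulting abstract phases into an actual vertical shift $T$. It is convenient to work with $\log \zeta$ rather than with $\zeta$ itself, so I would first reduce to approximating $\log f$ by $\log \zeta(\sigma+iT+it)$ for a non-vanishing $f$ (a standard Mergelyan-style approximation lets one replace the given continuous target by a polynomial with no zeros in the relevant region, at which point $\log f$ makes sense).

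Next I would truncate the Euler product: for a parameter $y$, set
\begin{gather*}
 \zeta_y(s) = \prod_{p \le y} \p{1 - p^{-s}}^{-1}.
\end{gather*}
Using standard second-moment estimates (Carlson-type mean value theorems) one shows that, for any compact $K \subset \{1/2 < \Re s < 1\}$, the mean square of $\zeta(s+i\tau)-\zeta_y(s+i\tau)$ over $\tau \in [0,T]$ is small once $y$ is chosen as a suitable power of $T$. This replaces the problem of approximating $\log f$ by $\log \zeta$ with that of approximating $\log f$ by $\sum_{p \le y} -\log(1 - e^{-2\pi i \theta_p} p^{-s})$ for a suitable choice of phases $\theta_p \in [0,1)$, where the substitution $p^{-it} \leftrightarrow e^{-2\pi i \theta_p}$ is legitimized by the linear independence of $\{\log p\}_{p \text{ prime}}$ over $\mathbb{Q}$.

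The heart of the argument is to show that such phases $\theta_p$ exist. Here I would invoke a rearrangement lemma in the Hilbert space $H^2$ of the disk (or of an appropriate rectangle): given a sequence of vectors $u_p \in H$ with $\sum_p \|u_p\|^2 < \infty$ but $\sum_p \|u_p\| = \infty$, and for which the closed span of $\{e^{i\theta} u_p\}_{\theta, p}$ is all of $H$, one can choose phases so that $\sum_p e^{i\theta_p} u_p$ approximates any prescribed vector. The denseness hypothesis here uses that $1/2<\sigma<1$ so that $\sum p^{-2\sigma}<\infty$ while $\sum p^{-\sigma}=\infty$; this is exactly where the restriction to the critical strip enters, and it is why the theorem fails on the line $\Re s = 1$ (as the paper will go on to exploit).

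Finally, having produced phases $\theta_p$ that make the truncated Euler product close to $f$, I would apply the Kronecker--Weyl theorem to the finitely many primes $p \le y$: since $\{\log p\}_{p \le y}$ are linearly independent over $\mathbb{Q}$, the orbit $\{(t \log p / 2\pi \bmod 1)\}_{p \le y}$ is equidistributed on the finite-dimensional torus, so some $T$ realizes $(T \log p / 2\pi \bmod 1) \approx (\theta_p)_{p \le y}$ to any desired accuracy, yielding the required inequality after collecting error terms. The main obstacle is the Hilbert-space rearrangement step: controlling the Fej\'er-style partial sums carefully enough that one can simultaneously (a) absorb the tail primes $p>y$, (b) guarantee the density needed to approximate the given $f$, and (c) keep the approximation uniform across the full interval $t\in[0,H]$ rather than merely pointwise.
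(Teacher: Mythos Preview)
Your outline is a faithful sketch of the classical Voronin argument and is essentially correct. The paper, however, does not prove this statement at all: it simply records it as a corollary of Bagchi's universality theorem, together with a separate result of the author (reference \cite{Andersson2} in the paper) that removes the non-vanishing hypothesis on $f$. So you have supplied a good deal more than the paper does; what you wrote is roughly the content that sits behind the citations.

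One point worth sharpening. Your reduction step --- ``a standard Mergelyan-style approximation lets one replace the given continuous target by a polynomial with no zeros in the relevant region'' --- is precisely the ingredient the paper isolates and outsources to \cite{Andersson2}. Mergelyan (or, since the segment has empty interior, Lavrent\cprime ev) gives uniform approximation by polynomials, but those polynomials may of course vanish on $[0,H]$. To obtain a \emph{non-vanishing} approximant one needs an additional perturbation: the image $p([0,H])$ of a polynomial is a planar null set, so for almost every small complex $c$ the shift $p+c$ has no zero on the segment. This works exactly because the compact set is one-dimensional; for two-dimensional compacta the argument fails, which is why Bagchi's theorem in its usual form retains the non-vanishing hypothesis. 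You pass over this as routine, whereas the paper treats it as the one non-obvious step and cites a dedicated reference for it. Otherwise your three layers (Euler-product truncation via mean-square estimates, Pechersky-type rearrangement in $H^2$, and Kronecker--Weyl to realize the phases) match the standard proof that underlies the Bagchi/Voronin theorem the paper invokes.
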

This is a simple consequence of the Universality theorem of Bagchi \cite{Bagchi} (see also Steuding \cite{Steuding} or Laurin{\v{c}}ikas \cite{Laurincikas5}) which generalizes the classical Universality result of Voronin \cite{Voronin0,Voronin2}.  This version of universality  is proved in \cite[Corrolary 2]{Andersson2},  where the requirement that $f(t)$ is nonvanishing on the interval that follows from a trivial application of Bagchi's theorem  is removed.

What about this theorem on the lines $\sigma=1/2$ or $\Re(s)=1$?  A related result of Voronin \cite{Voronin1} that predates his universality result is the following (For a discussion of on how these result are related, see  \cite{Andersson6}):
\begin{thm}
 Suppose that $1/2<\sigma \leq 1.$ Then the set of $n$-tuples 
 \begin{gather*}
   \{ (\zeta(\sigma+it),\zeta'(\sigma+it),\ldots,\zeta^{(n)}(\sigma+it)) : t \in \R \} 
\end{gather*}
 is dense in $\C^n$.
\end{thm}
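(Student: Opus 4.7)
The plan is to follow Voronin's original line of attack, reducing the density statement to a question about rearranging phases of a conditionally convergent Dirichlet series and applying Kronecker's theorem.

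First I would reduce the problem to showing that the vectors $\bigl(\log\zeta, (\log\zeta)', \ldots, (\log\zeta)^{(n)}\bigr)(\sigma+it)$ are dense in $\C^{n+1}$: each $\zeta^{(k)}(s)$ is a polynomial in $\zeta$ and in the logarithmic derivatives $(\log\zeta)^{(j)}$ with $j \leq k$, so density for one gives density for the other, restricted to targets $(a_0,\ldots,a_n)$ with $a_0\neq 0$; such targets are dense in $\C^{n+1}$, so this suffices.

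Next, I would use the Euler product in $\Re(s)>1$ to write
\begin{gather*}
(\log\zeta)^{(k)}(\sigma+it) \;=\; \sum_{p}\sum_{m\geq 1} \frac{(-m\log p)^k}{m\, p^{m\sigma}}\, p^{-imt},
\end{gather*}
and truncate to $p\leq N$. By Kronecker's theorem, the $\Q$-linear independence of $\{\log p\}_p$ allows one to prescribe $p^{-it}=e^{i\theta_p}$ independently for $p\leq N$ as $t$ varies in $\R$, so the truncated vector takes the form $\sum_{p\leq N}\vec F_p(\theta_p)$ with $\vec F_p(\theta) =\sum_{m\geq 1} m^{-1}p^{-m\sigma}\bigl((-m\log p)^k\bigr)_{k=0}^{n} e^{im\theta}$. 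The Vandermonde structure of $((-m\log p)^k)_k$ ensures that the $\vec F_p$ cover enough directions in $\C^{n+1}$, while the slow decay $\|\vec F_p\|\asymp (\log p)^n/p^\sigma$ (and in particular its non-summability at $\sigma=1$) provides the requisite conditional convergence. A Pecherski\u{\i}/L\'evy–Steinitz type rearrangement lemma then shows that $\bigl\{\sum_{p\leq N}\vec F_p(\theta_p):(\theta_p)\in\mathbb{T}^{\pi(N)}\bigr\}$ is dense in $\C^{n+1}$ once $N$ is large enough.

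The closing step is a tail estimate of $L^2$-type on a short $t$-interval, transferring density of the truncated Euler product to density of $\log\zeta$ and its derivatives themselves. This is straightforward in the open strip $1/2<\sigma<1$; on the edge line $\sigma=1$ one works slightly to the right at $\sigma=1+\delta$ and lets $\delta\to 0^+$ after the Kronecker approximation is in place. The main obstacle is the rearrangement step — turning torus-density into vector-density in $\C^{n+1}$. It is precisely the conditional (non-absolute) convergence of the Euler product at $\sigma=1$ that makes this possible: if the sum converged absolutely, the achievable vectors would lie in a bounded region and density would fail outright. A secondary technical point is controlling the tail right on $\Re(s)=1$, where the product barely fails to converge.
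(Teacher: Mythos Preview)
The paper does not supply its own proof of this statement: Theorem 2 is quoted as a result of Voronin (with a reference to \cite{Voronin1}) and is used only as background context for the discussion of universality. There is therefore no ``paper's own proof'' to compare against.

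That said, your outline is essentially Voronin's original argument: pass to $\log\zeta$ and its derivatives, exploit the $\mathbb Q$-linear independence of the $\log p$ via Kronecker, and then invoke a Pecherski\u{\i}-type rearrangement theorem to turn the conditionally convergent prime sum into a vector that can hit any target in $\C^{n+1}$. The identification of conditional (non-absolute) convergence as the mechanism that makes the rearrangement step work is exactly right, and the Vandermonde observation about the vectors $((-m\log p)^k)_k$ is the standard way to check the spanning hypothesis in Pecherski\u{\i}'s theorem.

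One point deserves more care. Your treatment of the boundary case $\sigma=1$ --- ``work at $\sigma=1+\delta$ and let $\delta\to 0^+$ after the Kronecker approximation is in place'' --- is too glib as stated. Density at $\sigma=1+\delta$ for every $\delta>0$ does not by itself yield density on the line $\sigma=1$; you need either a uniformity statement in $\delta$ for the Kronecker/tail step, or (more in the spirit of Voronin) a direct argument on the line $\Re(s)=1$ using that the relevant prime sums still converge there (conditionally, via the prime number theorem) together with a mean-value tail bound. This is repairable, but it is a genuine gap in the sketch rather than a routine detail.
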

  For  $\sigma=1/2$ this was recently proved to be false by Garunk{\v{s}}tis and Steuding \cite[Theorem 1. $(ii)$]{GarSteu}. Thus it is not surprising that we manage to show that Theorem 1 is also false on $\sigma=1/2$, although it does not immediately follow from this result. We use a similar argument as that of Garunk{\v{s}}tis and Steuding.  It  follows that the Riemann zeta-function is not universal even in $L^1$ or $L^2$-norm, by the simple fact  that the Hardy $Z$-function is real  and hence the argument of the Riemann zeta-function (up to $\pm 1$) on the critical line is determined by the Gamma-factors of the functional equation. Stirling's formula implies that the Gamma-factors are to regular to allow for universality. For a thorough discussion and a detailed proof of this result see our paper \cite{Andersson6}.

Since Theorem 2 is true for $\sigma=1$ we might guess that Theorem 1 should be true as well for $\sigma=1$. In view of this it might seem surprising that Theorem 1 is infact false on this line.  The proof is  rather simple.  Given large enough $H$  it follows  indirectly from the method in \cite{Andersson} that Theorem 1 can not be valid for $\sigma=1$, since otherwise our proof method which used universality on lines would have worked to disprove a case known to be true.

\subsection{The Balasubramanian Ramachandra method}
More directly, the method of Balasubramanian and Ramachandra gives the following lower estimate \cite{Ramachandra2}
\begin{gather} \label{balram} \int_{T}^{T+H} \abs{\zeta\p{1+it}} dt \geq C_0 H ,  \qquad H \geq H_0,
\end{gather}
for some absolute constants $C_0,H_0>0$ not depending on $T$. It is clear that Theorem 1 is not true (not even in $L^1$-norm)  for $H=H_0$ 
and $\sigma=1$ since if $\delta=C_0 H$, then $\zeta(1+it+iT)$ can clearly not approximate any function $f \in C(0,H)$ with $L^1$ norm less than $\delta$.

The results of Balasubramanian and Ramachandra are very strong and satisfying in some ways. For example their method gives the same omega-estimates for $|\zeta(1+it)|$ and $|\zeta(1+it)|^{-1}$ as \eqref{A2} even in short intervals $t \in [T,T+H]$ if $H=T^\theta$ with $0<\theta<1$. This follows from the result
\begin{gather} \label{A99}
    \max_{T \leq t \leq T+H}|\zeta(1+it)| \gg \log \log H, \qquad \qquad H \geq H_0.
\end{gather}
Assuming the Riemann hypothesis, when $H=T^\theta$ for $0<\theta<1$ this is the best possible result in these intervals up to a constant depending on $\theta$, by Littlewood's result \eqref{A1}.  The possibly exceptions where \eqref{A1} is not true can however be shown to be rather sparse unconditionally. By replacing the Riemann hypothesis with known density theorems \cite[Chapter 11]{Ivic},  we can show that the measure of the exceptional set
 \begin{gather*}
   \operatorname{meas} \, \{0 \leq t \leq T: \text{ Eq. \eqref{A1} is false }  \} \ll_\epsilon T^\epsilon
 \end{gather*} 
 is small for each $\epsilon>0$. This follows from the fact that the set $\{z=\sigma+it: 1-\epsilon/4 \leq \sigma, \, T_0-T_0^{\epsilon/4}  \leq t \leq T_0+T_0^{\epsilon/4} \}$ is a zero free region with at most a measure of $T^\epsilon$ exceptions for $0 \leq T_0<T$. Whenever we have such a zero-free region around $\zeta(1+iT_0)$,  Littlewood's method \cite{Littlewood} applies and the logarithm of the zeta-function can be estimated by Dirichlet polynomial of length some power of $\log T$.

This implies that Balasubramanian-Ramachandra's result \eqref{A99} is the best possible (up to a constant) on average in $T$ even if the Riemann hypothesis is assumed to be false. That is we have that
 \begin{gather} 
    \max_{T \leq t \leq T+H} \abs{\zeta(1+it)}  \ll \log \log T, \qquad \qquad  H \ll T_0^\theta. 
 \end{gather}
for $0 \leq T \leq T_0$, with at most a measure of $T_0^{\theta+\epsilon}$ exceptions.

 That it gives the conjectured right order of magnitude is typical of the method of Balasubramanian and Ramachandra (see Ramachandra's monograph \cite{Ramachandra}). Similarly on the critical line it will give the same lower bound for higher moments of the Riemann zeta-function in short intervals $[T,T+H]$ for $H=T^\theta$ as the conjectured upper bound. In the critical strip it is required that  
\begin{gather*}
 H \gg \log \log T. 
\end{gather*}
in order for Balasubramanian-Ramachandra's method to work, which is weaker than $H \gg 1$, Eq. \eqref{A99}  on the line $\Re(s)=1$. We will discuss the limits of this method in \cite{Andersson4}.

\subsection{A new lower bound in short intervals} 
While it follows from Balasubramanian-Ramachandra's method that the zeta-function is not universal on the line $\Re(s)=1$ for functions $f \in C(0,H_0)$, it does not rule out the existence of some small $0<\delta<H_0$ such that each  continuous function $f(t)$ on the interval $[0,\delta]$ can be approximated by $\zeta(1+iT+it)$. In this paper we will devise new methods that works on $\Re(s)=1$ for arbitrarily short intervals. Our main result will be the following theorem:

\begin{thm}   We have the following estimates for the $L^1$ norm of the zeta-function and its inverse in short intervals: 
   \begin{align*}
     (i)& \qquad   \inf_{T}  \int_T^{T+\delta} \abs{\zeta(1+it)} dt = \frac{\pi^2 e^{-\gamma}}{24} \delta^2+\Oh{\delta^4}, \\
     (ii)& \qquad  \inf_{T}  \int_T^{T+\delta} \abs{\zeta(1+it)}^{-1} dt = \frac{e^{-\gamma}} 4 \delta^2+\Oh{\delta^4}, 
\end{align*}
 for $\delta>0$. Furthermore, both estimates are valid if $\displaystyle \inf_T$ is replaced by $\displaystyle \liminf_{T \to \infty}$.
 \end{thm}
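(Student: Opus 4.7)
I locate the infimum at Bohr-Landau type extrema of $\zeta$ on $\Re(s)=1$ and compute the $L^{1}$ integral there via a Taylor expansion; parts (i) and (ii) are analogous with $1/\zeta$ in place of $\zeta$, so I describe (i) in detail.

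For the \emph{upper bound}, fix $x$ large and apply Kronecker's simultaneous Diophantine approximation theorem (the numbers $\{\log p : p \le x\}$ are $\mathbb{Q}$-linearly independent) to produce arbitrarily large $T_{0}=T_{0}(x)$ with $p^{iT_{0}}$ arbitrarily close to $-1$ for every prime $p \le x$. At such a $T_{0}$ the Euler product together with Mertens' theorem and the identity $\prod_{p}(1-p^{-2}) = 6/\pi^{2}$ gives
\[
 \zeta(1+iT_{0}) \approx \prod_{p\le x}\p{1+p^{-1}}^{-1}\sim \frac{\pi^{2}}{6e^{\gamma}\log x},
\]
while $(\zeta'/\zeta)(1+iT_{0}) = -\sum_{p}\log p/(p^{1+iT_{0}}-1)\approx \sum_{p\le x}\log p/(p+1)\sim \log x$, so $\zeta'(1+iT_{0})\to \pi^{2}e^{-\gamma}/6$ as $x\to\infty$. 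Taking $T = T_{0}-\delta/2$ and Taylor-expanding $\zeta(1+iT_{0}+is)=\zeta(1+iT_{0})+is\,\zeta'(1+iT_{0})+\Oh{s^{2}}$, integrating, and balancing $\log x \sim 1/\delta$ so that both the Euler-product tail and the quadratic remainder contribute $\Oh{\delta^{4}}$, I obtain
\[
 \int_{T}^{T+\delta}\abs{\zeta(1+it)}\,dt = \abs{\zeta'(1+iT_{0})}\int_{-\delta/2}^{\delta/2}\abs{s}\,ds + \Oh{\delta^{4}} = \frac{\pi^{2}e^{-\gamma}}{24}\delta^{2}+\Oh{\delta^{4}}.
\]
For (ii) one instead picks $T_{1}$ with $p^{iT_{1}}\approx 1$; then $\abs{\zeta(1+iT_{1})}\to\infty$ but the analogous Mertens cancellation yields $(1/\zeta)'(1+iT_{1})\to e^{-\gamma}$, and applying the same Taylor argument to $1/\zeta$ delivers $\frac{e^{-\gamma}}{4}\delta^{2}+\Oh{\delta^{4}}$.

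For the \emph{matching lower bound}, the elementary inequality $\int_{0}^{\delta}\abs{a+ibt}\,dt \ge \abs{b}\delta^{2}/4$ (valid for all $a,b\in\C$, saturated at $a=-ib\delta/2$) combined with a Taylor expansion gives $\int_{T}^{T+\delta}\abs{\zeta(1+it)}\,dt \ge \abs{\zeta'(1+iT)}\delta^{2}/4 - \Oh{\delta^{3}}$, but this is insufficient on its own since $\abs{\zeta'(1+iT)}$ can be made small by Voronin's density theorem. I therefore plan to pass to the Bohr-Jessen limit: as $T\to\infty$, the function $s\mapsto \zeta(1+iT+is)$ converges in distribution in $C([0,\delta])$ to the random Euler product $\xi(s) = \prod_{p}\p{1-X_{p}p^{-1-is}}^{-1}$ with $X_{p}$ i.i.d.\ uniform on the unit circle, so that $\liminf_{T\to\infty}\int_{0}^{\delta}\abs{\zeta(1+iT+is)}\,ds$ equals the essential infimum of $\int_{0}^{\delta}\abs{\xi(s)}\,ds$; one then verifies by a direct Euler-product computation that this essential infimum is attained in the limit by the same truncated ``$X_{p}=-1$ for $p\le x$'' configuration used in the upper bound, the Mertens factor $e^{-\gamma}$ arising from the interaction of this truncation with the full product.

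The principal obstacle is the lower bound: one must exclude the possibility that some other phase configuration $(X_{p})$, perhaps exploiting Voronin-type simultaneous smallness of $(\zeta,\zeta',\ldots,\zeta^{(n)})(1+iT)$, produces a smaller integral than the truncated extremal value $\frac{\pi^{2}e^{-\gamma}}{24}\delta^{2}$. This requires uniform control of the Euler-product tail together with a structural analysis of the higher Taylor coefficients, ensuring that contributions from derivatives of order $\ge 2$ cannot cancel the $\delta^{2}$ main term.
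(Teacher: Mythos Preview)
Your upper-bound argument has a genuine gap: the Taylor truncation is not justified. With $p^{iT_1}\approx 1$ for $p\le x$ one computes from $(\log\zeta)^{(k)}(s)=(-1)^k\sum_n\Lambda(n)(\log n)^{k-1}n^{-s}$ that $(1/\zeta)^{(k)}(1+iT_1)$ is of order $(\log x)^{k-1}$ for every $k\ge 0$; the same scaling holds for $\zeta^{(k)}(1+iT_0)$ in part~(i). Hence with $\log x\sim 1/\delta$ the $k$-th Taylor term is of order $\delta$ on $|s|\le\delta/2$ for \emph{every} $k$, and after integration the quadratic (and all higher) terms contribute at order $\delta^2$, not $O(\delta^4)$. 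No balancing of $x$ cures this: shrinking $\log x$ keeps the constant term large, enlarging it blows up the higher derivatives. Near your Kronecker points the function is simply not close to linear on $[-\delta/2,\delta/2]$. The paper's upper bound works because the correct extremal $\zeta_\delta(s)=\prod_p(1-\varepsilon_p p^{-s})^{-1}$, with signs $\varepsilon_p=(-1)^{\lfloor\delta\log p/(2\pi)\rfloor}$ alternating in blocks, has \emph{approximately constant} modulus $|\zeta_\delta(1+it)|^{-1}=e^{-\gamma}\delta/4+O(\delta^3)$ on the whole interval (Proposition~2); this is the non-obvious structural fact that makes the $O(\delta^4)$ error attainable.

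For the lower bound you correctly identify the obstacle but do not overcome it. The missing idea is Jensen's inequality
\[
\frac{1}{\delta}\int_T^{T+\delta}\log|\zeta(1+it)|\,dt\ \le\ \log\Bigl(\frac{1}{\delta}\int_T^{T+\delta}|\zeta(1+it)|\,dt\Bigr),
\]
which reduces matters to the logarithmic integral. Since $\log|\zeta(\sigma+it)|=-\sum_p\log|1-p^{-\sigma-it}|$ splits over primes, the infimum over $T$ can be computed prime by prime via an elementary calculus lemma (Lemma~1), and a prime-number-theorem computation gives $\inf_T\frac{1}{\delta}\int\log|\zeta(1+it)|^{-1}dt=\log(e^{-\gamma}\delta/4)+O(\delta^2)$ exactly (Theorem~6). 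This completely sidesteps the higher-derivative cancellation problem that stalls your Bohr--Jessen approach, and moreover the same extremal $\zeta_\delta$ that minimises the log-integral is, because of its constant modulus, also the one that realises the upper bound --- so Jensen is sharp here.
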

We also have the corresponding theorem when we consider the half-plane $\Re(s)>1$. This will in fact have a slightly simpler proof.
\begin{thm}   We have the following estimates for the $L^1$ norm of the zeta-function and its inverse in short intervals: 
   \begin{align*}
     (i)& \qquad   \inf_{\substack{T \\ \sigma>1}}  \int_T^{T+\delta} \abs{\zeta(\sigma+it)} dt = \frac{\pi^2 e^{-\gamma}}{24} \delta^2+\Oh{\delta^4}, \\
     (ii)& \qquad   \inf_{\substack{T \\ \sigma>1}}   \int_T^{T+\delta} \abs{\zeta(\sigma+it)}^{-1} dt = \frac{e^{-\gamma}} 4 \delta^2+\Oh{\delta^4}, 
\end{align*}
 for $\delta>0$. Furthermore, both estimates are valid if $\displaystyle \inf_T$ is replaced by $\displaystyle \liminf_{T \to \infty}$.
 \end{thm}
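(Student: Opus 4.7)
First I would reduce the problem, for each fixed $\sigma > 1$, to an optimization on the Bohr compactification. Since $\sigma > 1$, the Euler product $\zeta(\sigma+it) = \prod_p (1-p^{-\sigma-it})^{-1}$ converges absolutely, and because $\{\log p\}_p$ is linearly independent over $\mathbb{Q}$, Kronecker's approximation theorem, together with a tail estimate uniform in $\sigma > 1$, yields
\begin{gather*}
\inf_T \int_T^{T+\delta} |\zeta(\sigma+it)|^{-1} dt = \inf_{(z_p) \in \prod_p S^1} \int_0^\delta \prod_p |1-z_p p^{-\sigma-iu}| du,
\end{gather*}
and the same identity with $\liminf_{T\to\infty}$ in place of $\inf_T$; the analogous identity (with $|\cdot|^{-1}$ replaced by $|\cdot|$, and the product factors inverted) holds for the $|\zeta|$ estimate. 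Taking an additional infimum over $\sigma > 1$ recasts both halves of Theorem 4 as purely variational problems on the torus $\prod_p S^1$ parametrized by $\sigma > 1$.

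For the upper bound I would exhibit an explicit $(\sigma,(z_p))$ and compute. Take $\sigma\to 1^+$ and align each Euler factor's extremum at the midpoint of the interval: in (ii), choose $z_p = p^{i\delta/2}$ so the integrand becomes $|\zeta(\sigma+iv)|^{-1}$ on $v \in [-\delta/2,\delta/2]$, and the Laurent expansion $\zeta(s) = (s-1)^{-1} + \gamma + O(|s-1|)$ gives a leading $|v|$ behavior. For (i), take $z_p = -p^{i\delta/2}$, so that the identity $\prod_p (1+p^{-s})^{-1} = \zeta(2s)/\zeta(s)$ makes the integrand behave like $(\pi^2/6)|v|$ near the midpoint and supplies the factor $\zeta(2)=\pi^2/6$. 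To sharpen the constant to the claimed $e^{-\gamma}$, I would truncate the Euler product at $X$ with $\log X$ of order $1/\delta$ and invoke Mertens' theorem $\prod_{p\leq X}(1-1/p) \sim e^{-\gamma}/\log X$ at this matched scale, the residual tail over $p>X$ being negligible because $\sum_{p>X}p^{-\sigma} = O(1)$ in the calibrated regime $\sigma - 1 \sim 1/\log X \sim \delta$.

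For the lower bound I would apply Jensen's inequality
\begin{gather*}
\int_0^\delta |f(u)|\,du \geq \delta \exp\Bigl( \tfrac{1}{\delta}\int_0^\delta \log|f(u)|\,du \Bigr)
\end{gather*}
to $f(u) = \prod_p |1-z_p p^{-\sigma-iu}|^{\pm 1}$, splitting primes into a small-prime range $\log p \leq 1/\delta$ and a large-prime range $\log p > 1/\delta$. For small primes the Taylor expansion
\begin{gather*}
\log|1-p^{-\sigma-iw}| = \log(1-p^{-\sigma}) + \frac{p^{-\sigma}(w\log p)^2}{2(1-p^{-\sigma})^2} + O(w^4)
\end{gather*}
integrates to $\delta\log(1-p^{-\sigma}) + \delta^3 p^{-\sigma}(\log p)^2/(24(1-p^{-\sigma})^2) + O(\delta^5)$, which sums via $\sum_p \log(1-p^{-\sigma}) = -\log\zeta(\sigma)$ and $\sum_p p^{-\sigma}(\log p)^2/(1-p^{-\sigma})^2 = (\log\zeta)''(\sigma) \sim 1/(\sigma-1)^2$. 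For large primes the integral averages to essentially zero by Jensen's formula $\int_0^{2\pi}\log|1-re^{i\theta}|\,d\theta = 0$. Calibrating $\sigma - 1$ and the truncation scale against $\delta$ and invoking Mertens once more yields the matching lower bound.

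The main obstacle is the precise identification of the constants $e^{-\gamma}/4$ and $\pi^2 e^{-\gamma}/24$. These emerge only from a delicate coupled limit of three small parameters---$\sigma-1$, $1/\log X$, and $\delta$---all tending to zero together, with the $\delta^2$ main term extracted correctly from the interplay of the Laurent expansion of $\zeta$ at $s=1$ and the Mertens asymptotic. The error term $O(\delta^4)$ then accounts for the next-order terms in the Laurent expansion, the $\delta^4$ contributions from $(\log\zeta)''$ and $(\log\zeta)^{(4)}$, and the tail of the truncated Euler product.
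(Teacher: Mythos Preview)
Your Kronecker reduction to the torus and your use of Jensen's inequality for the lower bound are both correct and coincide with the paper's strategy. The genuine gap is in the upper-bound construction. Your choice $z_p=\pm p^{i\delta/2}$ (a fixed sign for every prime) is exactly the paper's Example~1: it yields $\int_{-\delta/2}^{\delta/2}|\zeta(1+iv)|^{-1}\,dv=\delta^2/4+O(\delta^4)$ for (ii) and the analogous $\pi^2\delta^2/24$ for (i), both too large by the factor $e^{-\gamma}\approx 0.56$. The paper explicitly raises this as its Question and answers \emph{No, this is not optimal}. Your proposal to ``sharpen to $e^{-\gamma}$ by truncating at $\log X\sim 1/\delta$ and invoking Mertens'' is not a construction: with a single sign for all $z_p$, one checks that $\inf_{\sigma>1}\int_{-\delta/2}^{\delta/2}|\zeta(\sigma+iv)|^{-1}dv=\inf_{a\ge 0}\delta^2\int_{-1/2}^{1/2}\sqrt{a^2+u^2}\,du=\delta^2/4$, attained as $\sigma\to 1^+$, so no calibration of $\sigma-1$ against $\delta$ gets below $\delta^2/4$. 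Your own tail estimate $\sum_{p>X}p^{-\sigma}=O(1)$ in the regime $\sigma-1\sim\delta$ only says the tail contributes an unspecified bounded multiplicative factor, which is precisely the factor at stake.

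The paper's actual extremal is $\zeta_\delta(s)=\prod_p(1-\varepsilon_p p^{-s})^{-1}$ with $\varepsilon_p=\operatorname{sign}\sin(\tfrac{\delta}{2}\log p)$, so the signs \emph{alternate} as $\delta\log p$ crosses multiples of $2\pi$. Lemma~1 shows this minimizes the log-integral prime by prime, and the decisive (and surprising) point is Proposition~2: $|\zeta_\delta(1+ix\delta/2)|$ is \emph{constant up to $O(\delta^2)$} for $|x|<1$, so Jensen's inequality is essentially an equality and the lower and upper bounds match. The constant $e^{-\gamma}$ emerges from an explicit evaluation $\lim_{s\to 0^+}\Psi(s)=\log\pi-\gamma$ via $\Gamma(1/2)=\sqrt{\pi}$ and Stirling (Lemma~3), not from a truncated Mertens product. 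The same issue affects your lower-bound sketch: the large-prime contribution to $\tfrac{1}{\delta}\int\log|f|$ is $O(1)$ rather than $o(1)$ (since $\sum_{p>e^{2\pi/\delta}}(p\log p)^{-1}\asymp\delta$), and getting that $O(1)$ right, after optimizing each $z_p$ separately, is exactly where $-\log 4-\gamma$ comes from; your $(\log\zeta)''$ bookkeeping at calibrated $\sigma$ does not recover it.
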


\subsection{On which lines is the  zeta-function universal?}
As a consequence of Theorem 3 it is clear that $\zeta(s)$ can not be universal on the line $\Re(s)=1$ even for short intervals. For whenever we have that
\begin{gather*} \int_0^\delta |f(t)| dt< \frac{\pi^2 e^{-\gamma}}{24} \delta^2+\Oh{\delta^4},  \, \, \,  \text{or} \, \, \, \int_0^\delta |f(t)|^{-1} dt<\frac{e^{-\gamma}} 4 \delta^2+\Oh{\delta^4},\end{gather*}
then $\zeta(1+it+iT)$ can not approximate the function $f(t)$ arbitrarily closely even in $L^1$ norm, and certainly not in sup-norm.

By combining Theorem 1 with the observations that the zeta function is not universal on the lines $\Re(s)=1/2$ and $\Re(s)=1$ we obtain the following result:
\begin{thm}
 Under the assumption of the Riemann hypothesis we have that the only lines where the Riemann zeta-function is universal in $L^1$-norm and for some interval $[0,\delta]$ are the lines $\Re(s)=\sigma$ for $1/2<\sigma<1$. Furthermore for those lines we have universality for any interval $[0,H]$ and in sup-norm.
\end{thm}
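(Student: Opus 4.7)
The universality direction is immediate: Theorem~1 already gives sup-norm universality on every interval $[0,H]$ when $1/2<\sigma<1$, which in particular implies $L^1$-universality on every $[0,\delta]$. The substance of the theorem is therefore the negative direction --- for every $\sigma\notin(1/2,1)$ and every $\delta>0$, some $f\in C[0,\delta]$ must resist $L^1$-approximation by translates $\zeta(\sigma+iT+it)$. In each case below I would simply take $f\equiv 0$: once $\int_0^\delta|\zeta(\sigma+iT+it)|\,dt$ is bounded below by a positive constant (depending on $\delta$ only) uniformly in (large) $T$, the zero function is already out of reach. The argument then splits into four cases according to whether $\sigma>1$, $\sigma=1$, $\sigma=1/2$ or $\sigma<1/2$.

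The cases $\sigma\ge 1$ are immediate from the present paper: Theorem~3(i) supplies the required uniform lower bound on $\sigma=1$, and Theorem~4(i) supplies it for every fixed $\sigma>1$. Alternatively, on the half-plane $\sigma>1$ the Euler product alone confines $|\zeta(\sigma+it)|$ to the fixed annulus $[\zeta(2\sigma)/\zeta(\sigma),\zeta(\sigma)]$, which cannot contain $0$. The case $\sigma=1/2$ is the unconditional non-universality result of Garunk{\v{s}}tis and Steuding \cite{GarSteu}, whose strategy in fact inspired Theorem~3; a self-contained treatment is also given in \cite{Andersson6}.

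The only case that genuinely uses the Riemann hypothesis is $\sigma<1/2$. Here I would combine the functional equation $\zeta(s)=\chi(s)\zeta(1-s)$ with Stirling's formula, which yields $|\chi(\sigma+it)|\asymp(t/2\pi)^{1/2-\sigma}$ as $t\to\infty$, and with Littlewood's conditional bound $|\zeta(1-\sigma-it)|^{-1}\ll(\log t)^A$ (valid since $1-\sigma>1/2$, cf.\ \eqref{A1}), obtaining
\begin{gather*}
|\zeta(\sigma+iT+it)|\;\gg\;\frac{T^{1/2-\sigma}}{(\log T)^A}\longrightarrow\infty\qquad(T\to\infty)
\end{gather*}
uniformly for $t\in[0,\delta]$. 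Consequently $f\equiv 0$ (indeed any bounded continuous $f$) cannot be $L^1$-approximated by $\zeta(\sigma+iT+it)$ for large $T$, and universality fails on the line $\Re(s)=\sigma$. This $\sigma<1/2$ step is the only real obstacle in the proof: it is the unique place where RH genuinely enters, through Littlewood's conditional lower bound on $1/\zeta$ in the strip $\sigma>1/2$. The remaining cases are essentially bookkeeping once Theorems~1, 3 and 4 of the present paper are combined with the cited Garunk{\v{s}}tis--Steuding theorem.
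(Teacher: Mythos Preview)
Your overall strategy matches the paper's: positive direction from Theorem~1, the cases $\sigma\ge 1$ from Theorems~3--4 (or the Euler product), the case $\sigma=1/2$ from \cite{GarSteu,Andersson6}, and the case $\sigma<1/2$ via the functional equation, Stirling, and an RH lower bound for $|\zeta|$ on the reflected line. That is exactly how the paper proceeds.

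There is one inaccuracy worth flagging. You invoke ``Littlewood's conditional bound $|\zeta(1-\sigma-it)|^{-1}\ll(\log t)^A$'' and cite \eqref{A1}. But \eqref{A1} is stated only on the line $\Re(s)=1$; inside the strip $1/2<1-\sigma<1$ the Riemann hypothesis does \emph{not} give a polylogarithmic bound for $1/\zeta$. What RH gives (this is the Titchmarsh Theorem~14.2 that the paper actually cites) is
\[
\log\zeta(1-\sigma+it)\ll(\log t)^{2-2(1-\sigma)+\epsilon}=(\log t)^{2\sigma+\epsilon},
\]
hence $|\zeta(1-\sigma+it)|^{-1}\ll\exp\bigl(C(\log t)^{2\sigma+\epsilon}\bigr)$, which is much larger than any power of $\log t$ but still $\ll_\epsilon t^\epsilon$. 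That weaker bound is all you need: combined with $|\chi(\sigma+it)|\asymp t^{1/2-\sigma}$ it yields $|\zeta(\sigma+iT+it)|\gg_\epsilon T^{1/2-\sigma-\epsilon}\to\infty$, which is precisely the paper's estimate. So the argument survives, but the justification you wrote is not quite right and the reference to \eqref{A1} is misplaced.

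A small omission: the paper also disposes of the lines $\Re(s)=\sigma$ with $\sigma\le 0$ and of lines not parallel to the imaginary axis, noting these follow trivially from the functional equation and the Dirichlet-series definition. Your case split stops at $\sigma<1/2$ without singling these out; they deserve a sentence.
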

\begin{proof} 
The only difficult remaining lines  to consider are the  lines $\Re(s)=\sigma$ for $0<\sigma<1/2$. Here we will need the Riemann hypothesis. 
From the Riemann hypothesis it follows that $\log \zeta(\sigma+it) \ll (\log t)^{2-2\sigma+\epsilon}$ \cite[Theorem 14.2]{Titchmarsh}, whenever $1/2 \leq \sigma \leq 1$. From this we see that
\begin{gather} \label{A3}
 \int_{T}^{T+\delta} |\zeta(\sigma+it)| dt \gg_\epsilon  T^{-\epsilon} , \qquad  \qquad (1/2<\sigma<1).
\end{gather}
 By combining \eqref{A3} with the functional equation and Stirling's formula for the Gamma-factors we obtain that
\begin{gather*} 
 \int_{T}^{T+\delta} |\zeta(\sigma+it)|dt \gg_\epsilon  T^{1/2-\sigma-\epsilon}, \qquad   \qquad (0<\sigma<1/2).
\end{gather*}
Since this will tend to infinity as $T \to \infty$ we see that we do not have universality in $L^1$-norm on the line  $\Re(s)=\sigma$ for $0<\sigma<1/2$. That we do not have universality on the lines $\Re(s)=\sigma$ with $\sigma \leq 0$ and the lines that are not parallel to the imaginary axis follows trivially from the functional equation and the definition of the Riemann zeta-function.
\end{proof}

\begin{prob}
 Prove Theorem 5 unconditionally.
\end{prob}

We remark that it would be sufficient to prove eq. \eqref{A3} unconditionally. This however seems quite difficult. In our paper \cite{Andersson6} we use some convexity estimates from Ramachandra's book \cite{Ramachandra} to prove \eqref{A3} under the Lindel\"of hypothesis and thus we have managed to relax the condition of the Riemann hypothesis somewhat.

Results like \eqref{A3} would have other important applications also. 
For example Ivi{\'c} \cite{Ivic2} showed that good lower estimates for this integral (sufficiently explicit in $\delta$) have applications 
on the problem of estimating the multiplicities of the zeta-zeroes.

Possibly another idea can be useful to attack Problem 1?

\section{Some approaches to Theorem 3}

We will first show some approaches to Theorem 3, that although they will not obtain the full strength of Theorem 3,  will yield non trivial results, for example sufficient to prove non universality on the line $\Re(s)=1$. Although the results are superseeded, the ideas might still have some interest. If nothing else they describe how our original approaches to this problem have improved with time. For the reader mainly interested in our final proof this section can be skipped.

\subsection{Lower bound - The Mollifier method}
We will first sketch how to prove  the lower bound in Theorem 3  $(i)$ with $\delta^2$ is replaced by $\delta^{2+\epsilon}$. The lower bound in Theorem 3  $(ii)$ can be proved similarly. This gave us our first of the non universality of the Riemann zeta-function on its abscissa of convergence, and it was first presented at the Zeta Function Days in Seoul, September 1st - 5th, 2009.  We  will find a new use of this method  in \cite{Andersson4}. Introduce the standard Mollifier\footnote{This has also been used by Selberg\cite{Selberg} to show a positive proportion of zeros on the critical line and it has also has important applications for zero density estimates (See \cite{Ivic}, chapter 11).}:
 \begin{gather*}M_X(s)=\sum_{1 \leq n<X} \mu(n) n^{-s}.\end{gather*} 
where  $X = e^{\delta^{-1-\epsilon}}$. Consider the integral
\begin{gather*} 
 (*)=\int_{-\infty}^\infty  \phi((t-T)/\delta) \zeta(1+it) M_X(1+it) dt,
\end{gather*}
for a test function $\phi \in C_0^\infty(\R)$, with support on $[0,\delta]$ such that $2 \pi \hat \phi(0)=1.$ By the triangle inequality we obtain that
\begin{gather*}
 \int_T^{T+\delta} |\zeta(1+it)| dt \gg \frac{(*)}{\max_{t \in [T,T+\delta]} |M_X(1+it)|}.
\end{gather*} 
The lower bound in Theorem 3 $(i)$ with $\delta^2$ replaced by $\delta^{2+\epsilon}$ follows by the estimates 
\begin{gather*}
 |M_X(1+it)| \ll \log X=\delta^{-1-\epsilon}, \qquad \text{and} \qquad 
(*) = \delta+\Oh{\delta^{1+\epsilon}}.
\end{gather*}
Here the first estimate comes from estimating the Dirichlet polynomial  $ M_X(1+it)$ trivially by its absolute values, and the second estimate follows from the fact that $\hat \phi(x) \ll_N x^{-N}$ as $x \to \infty$ for each $N>0$ (Schwartz class maps to Schwartz class under Fourier transforms). By choosing a somewhat smaller value of $X$ and by using theorems of
 Paley and Wiener  (see e.g. \cite[Corrolary 2]{Andersson3}) related to quasianalyticity   on how fast Fourier-transforms of functions with compact support can go to zero, we can obtain improvements of this result. For example it can be shown that $\delta^{2+\epsilon}$ may be replaced by
\begin{gather*}
   \delta^2 \, |\log \delta|^{-1-\epsilon}, \qquad \text{or} \qquad \delta^2 \, |\log \delta|^{-1} \p{\abs{\log |\log \delta|}}^{-1-\epsilon}. 
\end{gather*}
However, the same Paley-Wiener theorem will also give limits for how good estimates this method can yield. For example, this method will not be able to yield the bounds
\begin{gather*}
   \delta^2 \, |\log \delta|^{-1}, \qquad  \text{or} \qquad \delta^2 \, |\log \delta|^{-1} \p{\abs{\log |\log \delta|}}^{-1}. 
\end{gather*}
Hence this method of proof is not strong enough to obtain Theorem 3. 

\subsection{Upper bounds}

It is rather easy to see that the lower estimate in Theorem 3 $(ii)$\footnote{A similar example (although slightly more complicated) can be given for Theorem 3 $(i)$.}, can not be improved to anything better than  something of 
the order of $\delta^2$.  This follows from the example
\begin{example} We have that
\begin{gather*} \int_0^\delta |\zeta(1+it-\delta/2)|^{-1} dt = \frac {\delta^2} 4 + \Oh{\delta^4}.
\end{gather*}
\end{example}
\begin{proof}
 From the  Taylor expansion of the Riemann zeta function at $\Re(s)=1$ it follows that 
\begin{gather*}
 \frac 1 {\zeta(s)} = s-1 - \gamma (s-1)^2 + \Oh{(s-1)^3}.
\end{gather*}
This implies that 
\begin{align*}
  \int_0^\delta |\zeta(1+it-\delta/2)|^{-1} dt &=  \int_0^\delta  
  \abs{(t-\delta/2) + i \gamma (t-\delta/2)^2 + \Oh {(t-\delta/2)^3}} dt \\ &= \frac {\delta^2} 4 + \Oh{\delta^4}.   
 \end{align*}
\end{proof}

One may ask if this counterexample  the best possible? In fact at the Zeta-Function Days in Seoul we asked the following question:
\begin{quest}(Asked at the ZFD in Seoul)
 Suppose that $A(s)$  is a Dirichlet series such that its coefficients and the coefficients of its inverse are absolutely bounded by $1$.
 Is it true that
 \begin{gather*} 
  \int_0^\delta |A(1+it)| dt \geq \int_{-\delta/2}^{\delta/2} |\zeta(1+it)|^{-1} dt,
 \end{gather*}
 with equality iff $A(s)=e^{i \theta} \zeta(s-i \delta/2)^{-1}$?
\end{quest}

 {\em Answer: No}

 Within a month of posing the question we found some other examples that give better estimates:

\begin{example}
If
\begin{gather*}
 A(s)=  \p{\zeta(s)^2 \zeta(s+i \delta/6) \zeta(s-i \delta/6)}^{-1/4}. 
\end{gather*}
Then \begin{gather*}\int_{-\delta/2}^{\delta/2} | A(1+it)| dt \leq 0.9518 \int_{-\delta/2}^{\delta/2} \abs{\zeta(1+it)}^{-1}dt
\end{gather*}
for sufficiently small $\delta$.
\end{example}
This follows from the integral
\begin{gather*}
 \int_{-1/2}^{1/2} \abs{t^2-1/6}^{1/4} \abs{t}^{1/2}dt < \frac {0.9518} 4.
\end{gather*}
These examples are still not as good as what is possible, since
\begin{gather*}
 e^{-\gamma}=0.5615<0.9518<1,
\end{gather*}
they do not give as good results as Theorem 3, which follows from a different construction that gives us something very close to the optimal result.

\section{The logarithmic $L^1$-norm}
\subsection{Jensen's inequality}
We use the following version of Jensen's inequality

\begin{gather} \label{jenseq}
 \frac 1 {\delta} \int_{T}^{T+\delta} \log |\zeta(1+it)| dt \leq  \log  \p{\frac 1 {\delta} \int_T^{T+\delta} |\zeta(1+it)|dt}.
\end{gather}
This version of the Jensen's inequality  can be obtained from the inequality between the arithmetic and geometric means of $N$ points, by letting $N \to \infty$, taking the logarithm of both sides and interpreting the sums as Riemann sums. In general we can replace the logarithm function with any concave function. This inequality has been applied to the zeta-function before, and can be found for example in Titchmarsh \cite{Titchmarsh}, equation 2 on page 230. However it does not seem as anyone applied the inequality on this particular problem before. Theorem 3 thus reduces to the problem of estimating the integral of the logarithm of the zeta-functions in short intervals.

\subsection{The logarithmic $L^1$ norm  of Dirichlet series with multiplicative coefficients}

Jensen's inequality suggests that we should study the logarithm of the Riemann zeta-function. This will in fact be much easier, since we can integrate the logarithm of the Riemann zeta-function term-wise and we have better convergence properties. 

\begin{prop}
 Let $\mathcal M=\{A(s)=\sum_{n=1}^\infty a_n n^{-s}\}$ be the set of Dirichlet series, with completely multiplicative coefficients $a_{nm} =a_n a_m$, such that $|a_n| \leq 1$. Then we have  for $\sigma \geq 1$ that
\begin{align*}
  \inf_{A \in\mathcal  M } \int_{-\delta/2}^{\delta/2} 
  \log \abs{A(\sigma+it)}^{-1} dt, \qquad \text{and} \qquad
  \inf_{A \in  \mathcal M} \int_{-\delta/2}^{\delta/2} 
  \log \abs{A(\sigma+it)} dt
\end{align*}
are minimized by $A(s)=\zeta_{\delta}(s)$, and $A(s)=\zeta(2s)/\zeta_{\delta}(s)$ respectively, where 
\begin{gather*}
  \zeta_\delta(s) = \prod_{p \text{ prime}} (1-\varepsilon_p  p^{-s})^{-1}, \qquad \varepsilon_p=\varepsilon(\delta \log p).
\end{gather*} 
and  $\varepsilon(x)=\operatorname{sign} (\sin \frac x 2) = (-1)^{\lfloor x/(2\pi)  \rfloor}$.
\end{prop}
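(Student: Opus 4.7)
The strategy is to use the Euler product $A(s) = \prod_p (1 - a_p p^{-s})^{-1}$ of a completely multiplicative series to decouple the problem prime-by-prime. Taking logarithms of absolute values and swapping sum with integral (justified by absolute convergence when $\sigma > 1$, and extended to $\sigma = 1$ by a continuity argument as $\sigma \to 1^+$), one obtains
\begin{gather*}
\int_{-\delta/2}^{\delta/2} \log |A(\sigma+it)|^{-1}\, dt = \sum_p h_p(a_p), \\
h_p(a) := \int_{-\delta/2}^{\delta/2} \log|1 - a p^{-\sigma-it}|\, dt.
\end{gather*}
Since the $p$-th summand depends only on $a_p$, the infimum of $\int \log|A|^{-1}$ over $\mathcal M$ decomposes as $\sum_p \inf_{|a|\leq 1} h_p(a)$, and analogously the infimum of $\int \log|A|$ corresponds to maximizing each $h_p$ independently. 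It thus suffices, for each prime $p$, to identify the extremizers of $h_p$ on the closed unit disk.

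For fixed $p$, $h_p$ is the real part of $a \mapsto \int_{-\delta/2}^{\delta/2} \log(1 - a p^{-\sigma-it})\, dt$, which is analytic on $|a| \leq 1$ since $|a p^{-\sigma - it}| \leq p^{-\sigma} < 1$ throughout the integration. Hence $h_p$ is harmonic and, by the min/max principle, its extrema on the closed disk are attained on $|a| = 1$. Writing $a = e^{i\alpha}$ and substituting $u = \alpha - t\log p$ gives
\begin{gather*}
h_p(e^{i\alpha}) = \frac{1}{\log p}\int_{\alpha - \phi_p}^{\alpha + \phi_p} g(u)\, du, \\
g(u) := \log|1 - e^{iu}/p^\sigma|, \qquad \phi_p := \delta \log p/2,
\end{gather*}
where $g(u) = \frac{1}{2}\log(1 - 2 p^{-\sigma}\cos u + p^{-2\sigma})$ is smooth, even, $2\pi$-periodic, strictly decreasing on $[-\pi,0]$ and strictly increasing on $[0,\pi]$; in particular it has a unique minimum at $u \equiv 0$ and unique maximum at $u \equiv \pi$ modulo $2\pi$.

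Writing $2\phi_p = 2\pi k + r$ with $k = \lfloor \phi_p/\pi \rfloor$ and $0 \leq r < 2\pi$, periodicity reduces $h_p(e^{i\alpha})$ to a constant (the full-period contributions) plus $(\log p)^{-1} G(\beta)$, where $G(\beta) := \int_\beta^{\beta+r} g\, du$ and $\beta := \alpha + \phi_p - r$. The critical-point equation $G'(\beta) = g(\beta+r) - g(\beta) = 0$ combined with the evenness of $g$ leaves the two stationary points $\beta = -r/2$ and $\beta = \pi - r/2$ per period, corresponding to centering the integration window on the minimum $u \equiv 0$ and the maximum $u \equiv \pi$ of $g$ respectively. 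Hence $G$ is globally minimized at $\beta = -r/2$, which forces $\alpha \equiv -\pi k \pmod{2\pi}$, so $a_p = e^{i\alpha} = (-1)^k = \operatorname{sign}(\sin \phi_p) = \varepsilon_p$, yielding the $\int \log|A|^{-1}$-minimizer $\prod_p (1 - \varepsilon_p p^{-s})^{-1} = \zeta_\delta(s)$. Symmetrically, $G$ is maximized at $\beta = \pi - r/2$, giving $a_p = -\varepsilon_p$; the algebraic identity $(1 - \varepsilon_p p^{-s})(1 + \varepsilon_p p^{-s}) = 1 - p^{-2s}$, valid for $\varepsilon_p \in \{\pm 1\}$, then identifies the resulting maximizer $\prod_p (1+\varepsilon_p p^{-s})^{-1}$ with $\zeta(2s)/\zeta_\delta(s)$.

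The main technical obstacle is justifying the term-by-term integration at $\sigma = 1$, where the Euler product converges only conditionally; I would handle this by first proving the proposition for $\sigma > 1$ (where absolute convergence makes everything routine) and then extending to $\sigma = 1$ by continuity of the relevant integrals in $\sigma$ on $\sigma \geq 1$, using that $\log|A|$ has only integrable log-type singularities at zeros of $A$. The degenerate case $r = 0$ (when $\phi_p$ is an integer multiple of $\pi$) is harmless: $h_p(e^{i\alpha})$ is then independent of $\alpha$, so every unimodular $a_p$, including $\varepsilon_p$, is an extremizer.
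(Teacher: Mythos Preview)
Your proof is correct and follows essentially the same route as the paper: decouple via the Euler product and then, for each prime, locate the extremizers of the local integral by elementary calculus---the paper's Lemma~1 is precisely your sliding-window computation, phrased as a direct first/second-derivative analysis of $F(\theta)=\Re\int_{-\delta/2}^{\delta/2}\log(1-xe^{i(\theta+t)})\,dt$ and yielding the same critical points $\theta\equiv 0,\pi$ with the sign of $\sin(\delta/2)$ deciding which is the minimum. Your harmonicity argument pinning the extrema of $h_p$ to the boundary $|a|=1$ is a clean addition that the paper leaves implicit, and your worry about $\sigma=1$ is handled in the paper (see the proof of Lemma~2) simply by noting that the \emph{integrated} log-Euler-product $\sum_p\int_{-\delta/2}^{\delta/2}\log(1-a_p p^{-1-it})\,dt$ converges absolutely, since integration produces an extra factor of $1/\log p$.
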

This  follows from the Euler product and the following Lemma: 
\begin{lem}
 Assume that $0<x<1$, and $\delta>0$.
 Then for $\delta \neq 2 n \pi$ one has that \begin{gather*}
 \int_{-\delta/2}^{\delta/2} \log (1-\varepsilon x e^{i t})dt \leq  \Re\p{\int_{-\delta/2}^{\delta/2} \log (1-x e^{i (\theta+t)})dt} \leq 
  \int_{-\delta/2}^{\delta/2} \log (1+ \varepsilon x e^{i t})dx,
\end{gather*}
 for $\varepsilon=\varepsilon(\delta)=\operatorname{sign} (\sin \frac \delta 2) $. When $\delta=2n\pi$  the inequalities are in fact equalities for any $|\varepsilon|=1$.
\end{lem}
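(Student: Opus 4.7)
The plan is to view $\varphi(\theta) := \Re\int_{-\delta/2}^{\delta/2}\log(1 - xe^{i(\theta+t)})\,dt$ as a smooth $2\pi$-periodic function of $\theta$ and to show that its only critical points modulo $2\pi$ are $\theta = 0$ and $\theta = \pi$, identifying which is the global minimum and which the global maximum according to the sign of $\sin(\delta/2)$.

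First, I would expand $\log(1-w) = -\sum_{k\geq 1} w^k/k$ (valid since $|xe^{i(\theta+t)}| = x < 1$) and integrate term by term using $\int_{-\delta/2}^{\delta/2} e^{ikt}\,dt = 2\sin(k\delta/2)/k$. This gives
\[ \int_{-\delta/2}^{\delta/2} \log(1 - ze^{it})\,dt = -2\sum_{k=1}^{\infty}\frac{\sin(k\delta/2)}{k^2}z^k. \]
Setting $z = xe^{i\theta}$ and taking the real part yields the Fourier expansion
\[ \varphi(\theta) = -2\sum_{k=1}^{\infty} \frac{\sin(k\delta/2)\,x^k\cos(k\theta)}{k^2}. \]
Differentiating term by term, applying the identity $2\sin(k\theta)\sin(k\delta/2) = \cos(k(\theta-\delta/2)) - \cos(k(\theta+\delta/2))$ together with the classical sum $\sum_{k\geq 1} r^k\cos(k\alpha)/k = -\log|1 - re^{i\alpha}|$, I would obtain the closed form
\[ \varphi'(\theta) = \log\left|\frac{1 - xe^{i(\theta+\delta/2)}}{1 - xe^{i(\theta-\delta/2)}}\right|. \]

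The zeros of $\varphi'$ satisfy $|1 - xe^{i(\theta+\delta/2)}| = |1 - xe^{i(\theta-\delta/2)}|$. Using $|1 - xe^{i\beta}|^2 = 1 - 2x\cos\beta + x^2$, this reduces to $\sin\theta\sin(\delta/2) = 0$. When $\delta \neq 2n\pi$ so that $\sin(\delta/2)\neq 0$, the only critical points modulo $2\pi$ are therefore $\theta = 0$ and $\theta = \pi$. Moreover, $|1 - xe^{i(\theta+\delta/2)}|^2 - |1 - xe^{i(\theta-\delta/2)}|^2 = 4x\sin\theta\sin(\delta/2)$, so $\varphi'$ has the sign of $\varepsilon = \operatorname{sign}(\sin(\delta/2))$ throughout the open interval $(0,\pi)$. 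Consequently, when $\varepsilon = 1$ the function $\varphi$ increases on $(0,\pi)$ and decreases on $(\pi, 2\pi)$, making $\theta = 0$ the global minimum and $\theta = \pi$ the global maximum; when $\varepsilon = -1$ the roles are reversed. Since $\varphi(0) = \int_{-\delta/2}^{\delta/2}\log(1 - xe^{it})\,dt$ and $\varphi(\pi) = \int_{-\delta/2}^{\delta/2}\log(1 + xe^{it})\,dt$ are both real (manifest from the Fourier series), a case split on the sign of $\varepsilon$ gives the stated double inequality.

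Finally, the degenerate case $\delta = 2n\pi$ is immediate: every coefficient $\sin(k\delta/2) = \sin(kn\pi)$ vanishes, so $\varphi \equiv 0$ and each integral in the lemma is identically zero for any $|\varepsilon| = 1$, yielding equality throughout. The main technical step is establishing the closed-form expression for $\varphi'(\theta)$; once this is available, the critical-point analysis and the identification of the extrema are routine.
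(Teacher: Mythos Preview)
Your argument is correct and follows essentially the same strategy as the paper: compute the derivative of $\varphi(\theta)$, obtain the closed form $\varphi'(\theta)=\log\bigl|\tfrac{1-xe^{i(\theta+\delta/2)}}{1-xe^{i(\theta-\delta/2)}}\bigr|$, locate the critical points at $\theta\in\{0,\pi\}$, and classify them. The only differences are technical: the paper obtains $\varphi'$ by a substitution and the fundamental theorem of calculus rather than via a Fourier expansion, and it uses the second-derivative test rather than your sign analysis of $\varphi'$ on $(0,\pi)$; your Fourier-series route has the small bonus of making the case $\delta=2n\pi$ and the reality of $\varphi(0),\varphi(\pi)$ immediate.
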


\begin{proof}
 Define 
\begin{gather*}
F(\theta)= \Re \p{\int_{-\delta/2}^{\delta/2} \log (1-x e^{i (\theta+t)})dt}.
\end{gather*}
By the substitution $y=\theta+t$ this can be written as
\begin{gather*}
F(\theta)= \Re \p{\int_{-\delta/2+\theta}^{\delta/2+\theta} \log (1-x e^{i y})dy}.
\end{gather*}  
We will need to determine the extremal values of $F(\theta)$. We do this  as  a calculus excercise. By the fundamental theorem of calculus we get the derivative
\begin{gather*} 
\begin{split}
F'(\theta)&= \Re \p{ \log (1-x e^{i (\theta+\delta/2)})- \log (1-x e^{i (\theta-\delta/2)})}, \\ 
          &= \log \abs{\frac{1-x e^{i (\theta+\delta/2)}}{1-x e^{i (\theta-\delta/2)}}}.
\end{split}
\end{gather*}
This is zero if and only if
\begin{gather*}
(1-xe^{i \theta+i \delta/2})(1-x e^{-i \theta-i \delta/2})  = (1-x e^{i \theta-i \delta/2})(1-x e^{-i \theta+i \delta/2}). \\ \intertext{which simplifies to}
 x  \p{e^{i \theta}-e^{-i \theta}}  \p{e^{i \delta/2}-e^{-i \delta/2}}=0,  \\ \intertext{which is true if}
  \sin \theta=0, \qquad x=0, \qquad \text{or} \qquad \sin (\delta/2)=0.\end{gather*}
We see that $F'(\theta)=0$ if $\delta=2n \pi$ and hence $F(\theta)$ is constant in that case, proving the Lemma in case $\delta=2n  \pi$. Let us now assume that $\delta \neq 2 n \pi$.
Since $x \neq 0$, this means that $F'(\theta)=0$   has the solutions
\begin{gather*} \theta=n \pi.
\end{gather*}
We find  that
\begin{gather*} \begin{split}
 F''(\theta)&= \Re \p{\left[ \frac{-ix  e^{i (\theta+t)}}{1-x   e^{i (\theta+t)}} \right]_{t=-\delta/2}^{t=\delta/2}},  \\ &=
\Re \p{\left[ \frac{-ix  e^{i (\theta+t)}(1-x   e^{-i (\theta+t)})} {|1-x   e^{i (\theta+t)}|^2} \right]_{t=-\delta/2}^{t=\delta/2}}  =
\left[ \frac{x \sin (\theta+t)} {|1-x   e^{i (\theta+t)}|^2} \right]_{t=-\delta/2}^{t=\delta/2}. \end{split}
 \\ \intertext{For  $\theta=2 n \pi$ and $\theta=(2n+1)\pi$ we find that}
F''(2n \pi)=2 \frac{x \sin (\delta/2)} {|1-x   e^{i \delta/2}|^2}, \\ \intertext{and}
F''((2n+1)\pi)=-2 \frac{x \sin (\delta/2)} {|1+x   e^{i \delta/2}|^2},\end{gather*}   
This shows that $\theta=2n\pi$ and $\theta=(2n+1) \pi$ will
be  local maxima or minima for $F(\theta)$ depending on the sign of $\sin (\delta/2)$.
\end{proof}

\noindent {\em Proof of Proposition 1.}

Since its  coefficients are completely multiplicative, the Dirichlet series $A(s)$ and $\zeta_\delta(s)$ have the Euler-products
\begin{gather*} 
 A(s)=\prod_p (1-a_p p^{-s})^{-1},  \qquad \text{and} \qquad \zeta_\delta(s)=\prod_p (1-\varepsilon_p p^{-s})^{-1}.
\\ \intertext{By the fact that  $\varepsilon_p^2=1$  and $1-x^2=(1-x)(1+x)$, we also find that}
 \frac{\zeta(2s)} {\zeta_\delta(s)}=\frac{\prod_p (1-p^{-2s})^{-1}}{\prod_p (1- \varepsilon_p p^{-s})^{-1}}=\frac{\prod_p (1- \varepsilon_p^2  p^{-2s})^{-1}}{\prod_p (1- \varepsilon_p p^{-s})^{-1}} =\prod_p (1+\varepsilon_p p^{-s})^{-1}.
\end{gather*}
By taking the logarithm of these products, we obtain
\begin{gather*}
  \log A(s)=-\sum_p \log (1-a_p p^{-s}), \qquad \log \zeta_\delta(s)=-\sum_p \log (1-\varepsilon_p p^{-s}), \\ \intertext{and} \log \frac{\zeta(2s)}{\zeta_\delta (s)}=-\sum_p \log (1+\varepsilon_p p^{-s}).
\end{gather*}
The Proposition follows by using Lemma 1 termwise.
\qed

As a consequence of Proposition 1, we have the following Lemma:

\begin{lem} We have for $\sigma \geq 1$ that
\begin{align*}
 (i)& & \inf_{T } \int_T^{T+\delta} 
  \log \abs{\zeta(\sigma+it)}^{-1} dt &=  \int_{-\delta/2}^{\delta/2}  \log \abs{\zeta_\delta(\sigma+it)}^{-1} dt, \\
 (ii)& & \inf_{T } \int_T^{T+\delta} 
  \log \abs{\zeta(\sigma+it)} dt &=  \int_{-\delta/2}^{\delta/2}  \log \abs {\frac{\zeta(2\sigma+2it)}{\zeta_\delta(\sigma+it)}} dt,
\end{align*}
 where $\zeta_\delta(s)$ is defined as in Proposition 1.
\end{lem}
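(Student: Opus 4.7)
The plan is to deduce Lemma 2 directly from Proposition 1 by realizing the vertical shifts of $\zeta$ as explicit elements of the class $\mathcal M$ of completely multiplicative Dirichlet series with coefficients of modulus at most one. Concretely, for any real $T$ the series $\zeta(s+iT)=\sum_{n\ge 1} n^{-iT}\cdot n^{-s}$ has completely multiplicative coefficients $a_n=n^{-iT}$ with $|a_n|=1$, hence $\zeta(s+iT)\in\mathcal M$. Thus the set $\{\zeta(s+iT):T\in\mathbb R\}$ sits inside $\mathcal M$, and Lemma 2 is equivalent to the assertion that this subset is already rich enough to realize the infimum in Proposition 1.

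For the lower bound ($\ge$) in (i), fix any $T$, set $T':=T+\delta/2$, and perform the substitution $u=t-T'$ to obtain
$\int_T^{T+\delta}\log|\zeta(\sigma+it)|^{-1}\,dt=\int_{-\delta/2}^{\delta/2}\log|\zeta(\sigma+iT'+iu)|^{-1}\,du.$
Applying Proposition 1 to $A(s):=\zeta(s+iT')\in\mathcal M$ bounds the right-hand side below by $\int_{-\delta/2}^{\delta/2}\log|\zeta_\delta(\sigma+iu)|^{-1}\,du$. Taking $\inf_T$ yields half of (i), and part (ii) is identical after comparing $A$ against the extremizer $\zeta(2s)/\zeta_\delta(s)$.

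For the upper bound ($\le$) the plan is to use Kronecker approximation. The set $\{\log p: p\text{ prime}\}$ is $\mathbb Q$-linearly independent by unique factorisation, so for any finite prime set $\mathcal P$ and any $\eta>0$ Kronecker's theorem furnishes arbitrarily large $\tau$ with $|p^{-i\tau}-\varepsilon_p|<\eta$ for every $p\in\mathcal P$. Setting $T':=\tau$, the partial Euler product of $\zeta(\sigma+iT'+it)$ over $p\in\mathcal P$ is uniformly close on $t\in[-\delta/2,\delta/2]$ to the partial Euler product of $\zeta_\delta(\sigma+it)$. Taking logarithms of absolute values and integrating, the two truncated integrals agree up to $O(|\mathcal P|\cdot\eta)$. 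Letting $\eta\to 0$ and exhausting all primes by enlarging $\mathcal P$ recovers the full integrals on each side, so we obtain a sequence $T_k\to\infty$ along which $\int_{T_k}^{T_k+\delta}\log|\zeta(\sigma+it)|^{-1}\,dt\to\int_{-\delta/2}^{\delta/2}\log|\zeta_\delta(\sigma+it)|^{-1}\,du$. Combined with the lower bound this proves (i); part (ii) follows identically, using that $1-p^{-2s}=(1-p^{-s})(1+p^{-s})$ gives the Euler product representation of $\zeta(2s)/\zeta_\delta(s)$ as $\prod_p(1+\varepsilon_p p^{-s})^{-1}$.

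The main obstacle is controlling the tail $\sum_{p\notin\mathcal P}$ precisely at the boundary $\sigma=1$, where the Euler product of $\log\zeta$ converges only conditionally and so the tails are not uniformly small. My preferred route is to first establish the identity for $\sigma>1$, where absolute convergence makes every tail estimate trivial, and then pass to $\sigma=1$ by a dominated-convergence / continuity argument, exploiting that both $\zeta(1+it)$ and $\zeta_\delta(1+it)$ are zero-free and locally bounded away from zero on the short interval in question. Alternatively, one could invoke short-interval mean-value bounds for $\log\zeta(1+it)$ of the type mentioned in Section 1 (via Littlewood's method in the zero-free regions produced by density theorems) to dominate the truncation error $\log\zeta-\log\zeta_N$ in $L^1$ uniformly in $T$ along a suitable sequence, which is exactly what is needed to close up the Kronecker approximation at $\sigma=1$.
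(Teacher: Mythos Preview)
Your overall strategy --- lower bound from Proposition 1 applied to $A(s)=\zeta(s+iT')\in\mathcal M$, upper bound via Kronecker approximation on the primes --- is exactly the paper's approach. Where you diverge is in how to handle the tail at $\sigma=1$, and here you are overlooking the simple device that the paper actually uses.

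The paper observes that although $\log\zeta(\sigma+it)=\sum_n \Lambda(n)/(n^{\sigma+it}\log n)$ is not absolutely convergent at $\sigma=1$, its \emph{integral over $t\in[0,\delta]$} is. Indeed, integrating termwise gives
\[
\int_0^\delta \sum_{n\ge 2}\frac{a_n\Lambda(n)}{n^{\sigma+it}\log n}\,dt
=\sum_{n\ge 2}\frac{a_n\Lambda(n)\,(n^{-i\delta}-1)}{-i(\log n)^2\,n^\sigma},
\]
and the right-hand side is absolutely convergent for every $\sigma\ge 1$ and every bounded sequence $(a_n)$, since $\sum_n \Lambda(n)/(n(\log n)^2)<\infty$. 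The extra factor $1/\log n$ gained from the $t$-integration is precisely what rescues absolute convergence on the line $\sigma=1$. Once you have this, the tail $\sum_{p\notin\mathcal P}$ of the \emph{integrated} logarithm is uniformly small in $T$, and the Kronecker argument closes immediately at $\sigma=1$ without any limiting procedure in $\sigma$.

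Your proposed workarounds are therefore unnecessary, and the continuity route in particular is delicate: passing $\sigma\to 1^+$ through $\inf_T$ requires justifying that the infimum commutes with the limit, which is not automatic and would itself need some uniform-in-$T$ tail control --- i.e.\ essentially the same absolute-convergence observation you are trying to avoid.
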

\begin{proof}
Since 
\begin{gather*}
  \int_0^\delta \sum_{n=1}^\infty \frac{a_n \Lambda(n)} {n^{\sigma+it} \log n}dt = \sum_{n=1}^\infty \frac{a_n \Lambda(n) (n^{-i \delta}-1)} {-i(\log n)^2 n^\sigma}  
\end{gather*}
is absolutely convergent for any choice of $|a_n| \leq 1$, $\sigma \geq 1$ and $\delta>0$ it follows by using the Euler-product of $\zeta_\delta(s)$ and $\zeta(s)$ and integrating the logarithms term wise that it is sufficient to show that there exist some sequence $T_k$ such that
\begin{gather*}
 \lim_{k \to \infty}\abs{p^{iT_k}-\varepsilon_p}=0, \qquad \text{and}  \lim_{k \to \infty}\abs{p^{iT_k}+\varepsilon_p}=0, \\ \intertext{respectively for each prime $p$ in order for}
 \lim_{k \to \infty}
 \int_{-\delta/2}^{\delta/2} \p{\log \zeta(\sigma+i T_k+it)-\log\zeta_\delta(\sigma+it)}dt=0, \\ \intertext{and}
 \int_{-\delta/2}^{\delta/2} \p{\log \zeta(\sigma+i T_k+it)-\log \frac{\zeta(2\sigma+2it)} {\zeta_\delta(\sigma+it)}}dt=0.
\end{gather*}
But this follows by the fact that $\log p$ are linearly independent over $\mathbb Q$; which is equivalent to the fundamental theorem of arithmetic; and Kroenecker's theorem.
\end{proof}

From this lemma the following analogue of Theorem 3 for the logarithmic $L^1$ -norm  follows.
\begin{thm}
 We have that
 \begin{align*}
(i)& &  \inf_{T } \frac  1 {\delta} \int_T^{T+\delta}  \log \abs{\zeta(1+it)}^{-1} dt &=  \log \delta - \log 4 -\gamma + \Oh{\delta^2}, \\
 (ii)& & \inf_{T } \frac 1 {\delta} \int_T^{T+\delta}  \log \abs{\zeta(1+it)} dt &=    \log \delta -  \log 4 +\log \zeta(2) + \Oh{\delta^2}. \\
\end{align*}
\end{thm}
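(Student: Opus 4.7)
By Lemma 2 it suffices to evaluate, as $\delta \to 0^+$, the two integrals
\begin{align*}
I_-(\delta) &:= \int_{-\delta/2}^{\delta/2} \log \abs{\zeta_\delta(1+it)}^{-1}\, dt, \\
I_+(\delta) &:= \int_{-\delta/2}^{\delta/2} \log \abs{\zeta(2+2it)/\zeta_\delta(1+it)}\, dt,
\end{align*}
corresponding to $(i)$ and $(ii)$ respectively. Since $\zeta(s)$ is analytic at $s=2$ with $\zeta(2),\zeta'(2)\in\R$, the real part of the linear Taylor term vanishes and $\log\abs{\zeta(2+2it)} = \log\zeta(2) + \Oh{t^2}$, so $\int_{-\delta/2}^{\delta/2} \log\abs{\zeta(2+2it)}\, dt = \delta\log\zeta(2) + \Oh{\delta^3}$; writing $I_+(\delta) = \int \log\abs{\zeta(2+2it)}\, dt + I_-(\delta)$, part $(ii)$ follows from part $(i)$.

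For $(i)$, I would expand via the Euler product and interchange the sum with the integral, as justified by the absolute convergence argument in the proof of Lemma~2, obtaining
\begin{gather*}
I_-(\delta) = -\sum_p \sum_{k=1}^\infty \frac{2 \varepsilon_p^k \sin(k\delta \log p/2)}{k^2 p^k \log p}.
\end{gather*}
The definition of $\varepsilon_p$ gives $\varepsilon_p \sin(\delta \log p/2) = \abs{\sin(\delta \log p/2)}$, so the $k=1$ (main) contribution is $-\sum_p 2\abs{\sin(\delta \log p/2)}/(p \log p)$. The plan is to evaluate this by partial summation against Mertens' second theorem $\sum_{p\leq x} 1/p = \log\log x + M + \Oh{1/\log x}$; after the substitution $v = (\delta/2)\log x$ this transforms into $\delta \int_{\delta \log 2/2}^\infty \abs{\sin v}/v^2\, dv$. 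The elementary expansion $\int_a^\infty \abs{\sin v}/v^2\, dv = -\log a + C_0 + \Oh{a^2}$ as $a\to 0^+$ then delivers the form $-\delta\log\delta + c\delta + \Oh{\delta^3}$.

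The $k\geq 2$ terms form an absolutely convergent series of total size $\Oh{\delta}$: expanding $\sin(k\delta \log p/2) = k\delta \log p/2 + \Oh{(k\delta \log p)^3}$ separates the $\delta$-coefficient. The even-$k$ portion evaluates cleanly via the identity $\sum_p \sum_{k\ \text{even}} 1/(kp^k) = \tfrac12 \log\zeta(2)$, while the odd-$k\geq 3$ portion differs from its ``clean'' value only on primes $p > e^{\pi/\delta}$ (where $\varepsilon_p$ may differ from $1$), so contributes only $\Oh{\delta e^{-c/\delta}}$. Collecting all pieces and invoking Mertens' third theorem in the sharp form $\prod_{p\leq x}(1-1/p) = e^{-\gamma}(\log x)^{-1}(1+\Oh{1/\log x})$ should identify $c$ as $-(\log 4 + \gamma)$, yielding $(i)$.

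The hardest step is the precise identification of the constant $c = -\log 4 - \gamma$. The leading $\log\delta$ falls out automatically from the logarithmic singularity of $\int_a^\infty \abs{\sin v}/v^2\, dv$ at $a=0$, but pinning down the subleading constant requires a quantitatively sharp form of Mertens' theorems together with careful bookkeeping of the $k\geq 2$ corrections. The $\Oh{\delta^2}$ error tolerance of the theorem (equivalently $\Oh{\delta^3}$ before normalization by $\delta$) fortunately leaves enough room to absorb the PNT-strength error terms such as $\Oh{1/\log x}$ after they are pushed through the substitutions above.
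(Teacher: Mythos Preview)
Your reduction of $(ii)$ to $(i)$ via $\log|\zeta(2+2it)|=\log\zeta(2)+O(t^2)$ is clean, and the overall plan of expanding the Euler product of $\zeta_\delta$ and integrating termwise is sound, but your decomposition differs from the paper's. The paper does not separate by the prime power $k$; it writes $\log|\zeta_\delta|=\log|\zeta|+\log|\zeta_\delta/\zeta|$, computes $\int_{-\delta/2}^{\delta/2}\log|\zeta(1+it)|\,dt$ directly from the Laurent expansion at $s=1$, and then observes that the quotient $\zeta_\delta/\zeta$ involves only primes $p>e^{2\pi/\delta}$, so that both the linearization $\log(1+x)\approx x$ and the prime-number-theorem remainder are exponentially small in $1/\delta$. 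This is what makes the paper's $O(\delta^3)$ (unnormalized) transparent.

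Your $k$-separation can be pushed through, but there is a quantitative slip in your last paragraph: the Mertens remainder $O(1/\log x)$ is not ``PNT-strength'' (it is pre-PNT, Chebyshev-level), and it does \emph{not} give $O(\delta^3)$ after the partial summation. Writing $b(x)=|\sin(\delta\log x/2)|/\log x$, the error from the Mertens remainder is $-\int_2^\infty R(x)\,b'(x)\,dx$; on the range $2\leq x\leq e^{2/\delta}$ the Taylor cancellation $u\cos u-\sin u=O(u^3)$ gives $b'(x)=O(\delta^3\log x/x)$, so with $R(x)=O(1/\log x)$ one gets
\[
\int_2^{e^{2/\delta}}\frac{1}{\log x}\cdot\frac{\delta^3\log x}{x}\,dx\;\asymp\;\delta^2,
\]
one full power short. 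To reach $O(\delta^3)$ you must use the genuine PNT form $\sum_{p\leq x}1/p=\log\log x+M+O(e^{-c\sqrt{\log x}})$; then the remainder integral is $O(\delta^3)$ and your route works. The constant then assembles from $\sum_p\sum_{k\geq 2}(kp^k)^{-1}=\gamma-M$ (comparing Mertens' second and third theorems) together with the evaluation $\lim_{a\to 0^+}\bigl[\int_a^\infty|\sin v|v^{-2}\,dv+\log a\bigr]=\log 2$, which is equivalent to the $\sin^-$ integral identity the paper asserts in its sketch and actually proves later via Lemma~3.
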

The lower bound in Theorem 3 will be an  immediate consequence:

\begin{proof} We will sketch a  proof of $(i)$.  Theorem 6. $(ii)$ can be proved by the same method, by replacing the use of Lemma 2 $(i)$ with Lemma 2 $(ii)$. We   give a full proof of  this result by another method later which will yield stronger results (e.g. Proposition 2) also. We use Lemma 2 $(i)$ as a starting point, and hence it is sufficient to calculate
\begin{gather} \label{abaj2}
 \int_{-\delta/2}^{\delta/2}  \log \abs{\zeta_\delta(\sigma+it)} dt  =
\int_{-\delta/2}^{\delta/2}  \log \abs{\zeta(\sigma+it)}dt+
  \int_{-\delta/2}^{\delta/2}  \log \abs{\frac{\zeta_\delta(\sigma+it)}{\zeta(\sigma+it)}} dt,
\end{gather}
for $\sigma>1$.
By using the development of $\log \zeta(1+it)$ as a power series it follows that
\begin{gather} \label{abaj}
 \int_{-\delta/2}^{\delta/2}  \log \abs{\zeta(1+it)} dt =  \delta \log \delta -  (\log 2 +1) \delta + \Oh{\delta^3}.
\end{gather}
By using the Euler products of $\zeta_\delta(s)$ and $\zeta(s)$, taking the logarithms and integrating term wise, we obtain

\begin{gather*}
  \int_{-\delta/2}^{\delta/2}  \log \abs{\frac{\zeta_\delta(\sigma+it)}{\zeta(\sigma+it)}} dt=\sum_{p}  \int_{-\delta/2}^{\delta/2} \log\p{1- \eta\p{\frac{\delta \log p}{4 \pi}}}  p^{-\sigma-it} dt, \\ \intertext{where}  \eta(x)= 
   \begin{cases} 2, & 1/2< \{x \}<1,  \\
    0, & 0 \leq \{x \} \leq   1/2. \end{cases}
\end{gather*}

By using the fact that $\log(1+x)=x+\Oh{x^2}$, and integrating term wise, this equals
\begin{multline*}
  -\sum_{p}  \int_{-\delta/2}^{\delta/2} \eta\p{\frac{\delta \log p}{4 \pi}}  p^{-\sigma-it} dt+\Oh{e^{- 1/\delta}}  =  \\
-4 \sum_{p}    \frac {\sin^-( \delta \log p/2)} {p^\sigma \log p} +\Oh{e^{- 1/\delta}}.
\end{multline*}
where
\begin{gather*}
\sin^-(x)= \begin{cases} 0, & \sin(x) \geq 0, \\ -\sin x, & \sin x <0. \end{cases}
\\  
 \intertext{By letting $\sigma \to 1+$ and using the prime number theorem  this equals}
 -4 \int_0^\infty \frac{ \sin^- (\delta t/2)} {t^2} dt + \Oh{\delta^3}=  -4 \delta \int_0^\infty \frac{ \sin^- ( x/2)} {x^2} dx + \Oh{\delta^3}. \\ \intertext{This integral can be evaluated in terms of Euler's constant and equals}  
 \delta \p{1-\gamma -\log 2}+ \Oh{\delta^3}.
\end{gather*}
The result follows from comining this with \eqref{abaj2} and \eqref{abaj}.
\end{proof}

\vskip 12pt

\noindent{\em Proof of lower bound in Theorem 3.} This follows from Theorem 6 and Jensen's inequality \eqref{jenseq}. \qed

We also remark that our nonuniversality results for the Riemann zeta-function on the line $\Re(s)=1$ can be obtained immediately from Theorem 6, instead of Theorem 3.

Theorem 6 completely solves the problem of obtaining optimal constants in the problem studied in Theorem 3 when the absolute value is replaced by the logarithm of the absolute value  of the function.

\section{Proof of Theorem 3 and Theorem 4}

We have proved that $\zeta_\delta(s)$ is an extremal function for the Logarithmic $L^1$-norm. Hence it is natural to ask what this function will give when we consider the $L^1$-norm of this function in short intervals. Will it give something better than Examples 1 and Examples 2 for answering Question? The surprising answer is that the absolute value of this function will be approximately constant in short intervals, and hence this will essentially give an extremal example also for Question and will yield a proof of Theorem 3 and Theorem 4. This  follows from the following proposition:

 \begin{prop}
 We have uniformly for $-1<x<1$ that
 \begin{gather*}
   \log \zeta_\delta(1+ix \delta/2)= -\log \delta +\gamma +\log 4 - i x \p{\frac 1 {2x} \int_0^{x} \frac { \tan (\pi t/2)} t dt+\gamma} + \Oh{\delta^2}.
 \end{gather*}
\end{prop}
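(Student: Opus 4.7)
The plan is to extract the asymptotic expansion of $\log\zeta_\delta(1+ix\delta/2)$ by combining the Euler product with Mertens-type prime sums and an identity for the tangent function. First I would take the logarithm of the Euler product and exploit $\varepsilon_p^2=1$ to split the Taylor series $-\log(1-\varepsilon_p p^{-s})=\sum_{k\ge 1} \varepsilon_p^k p^{-ks}/k$ into its odd and even parts, obtaining
\begin{gather*}
\log\zeta_\delta(s)=\sum_p \varepsilon_p\operatorname{arctanh}(p^{-s})+\tfrac12\log\zeta(2s).
\end{gather*}
At $s=1+ix\delta/2$ the second term equals $\tfrac12\log\zeta(2)+\Oh{\delta}$, contributing to the constant part (its $O(\delta)$ imaginary fluctuation will be absorbed in the final accounting). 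The higher odd-power tail $\sum_p\varepsilon_p\p{\operatorname{arctanh}(p^{-s})-p^{-s}}$ is absolutely convergent and contributes a constant to order $\Oh{\delta^2}$, so the substantive analysis reduces to $T_1(s):=\sum_p\varepsilon_p/p^s$.

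Next I would invoke partial summation against the prime number theorem with Vinogradov--Korobov error (which is super-polynomially small in $\delta$, since the relevant range has $\log X\sim 2\pi/\delta$) to obtain
\begin{gather*}
T_1(1+ix\delta/2)=M_\ast+\int_{\delta\log 2}^{\infty}\frac{\varepsilon(v)e^{-ixv/2}}{v}\,dv+\Oh{\delta^{N}}
\end{gather*}
for every $N>0$, where $M_\ast$ is a Mertens-type constant. I would then split the integral at $v=2\pi$. On $[\delta\log 2,2\pi]$ one has $\varepsilon(v)=1$, and Taylor-expanding $e^{-ixv/2}$ extracts the singular term $-\log\delta$ along with absolutely convergent pieces. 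For the tail $\int_{2\pi}^\infty$, substituting $v=2k\pi+u$ on each period $[2k\pi,2(k+1)\pi]$ (where $\varepsilon=(-1)^k$) produces an alternating series which I would identify using the Mittag--Leffler expansion
\begin{gather*}
\tan(\pi z/2)=\frac{4z}{\pi}\sum_{n\ge 0}\frac{1}{(2n+1)^2-z^2},
\end{gather*}
whose integral yields $\int_0^x\tan(\pi t/2)/t\,dt=\frac{2}{\pi}\sum_n(2n+1)^{-1}\log\frac{2n+1+x}{2n+1-x}$. Finally, the Wallis product $\prod_{k\ge 1}(2k-1)(2k+1)/(2k)^2=2/\pi$ combined with Mertens' third theorem on the first block $p\le e^{2\pi/\delta}$ produces the real constant $\gamma+\log 4$ (the calculation at $x=0$ already matches this, since $\gamma+\log(2\pi)+\log(2/\pi)=\gamma+\log 4$).

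The main obstacle is the careful assembly of constants and phases to the claimed precision $\Oh{\delta^2}$. A naive Fourier expansion of $\varepsilon(v)$ produces formally divergent series such as $\sum_n (2n+1)^{-1}$, so the tail integral must be summed in the Wallis form rather than termwise, with its convergence coming from the geometric series in $u/(2k\pi)$ inside each period. The second-order term $-i\gamma x$ is the most delicate: it emerges only when one uses the refined expansion $P(s)=-\log(s-1)+M_0+\gamma(s-1)+\Oh{(s-1)^2}$ of the prime zeta function and matches its Stieltjes-constant correction against the $O(\delta)$ imaginary part of $\tfrac12\log\zeta(2s)$, uniformly in $|x|<1$.
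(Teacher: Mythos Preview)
Your decomposition $\log\zeta_\delta(s)=\tfrac12\log\zeta(2s)+\sum_p\varepsilon_p\operatorname{arctanh}(p^{-s})$ is a genuinely different route from the paper's. The paper instead writes $\log\zeta_\delta(s)=\log\zeta(s)-\sum_n\Lambda(n)\eta(\tfrac{\delta}{4\pi}\log n)n^{-s}/\log n+O(e^{-2\pi/\delta})$, represents the $\eta$-sum by a Perron--Mellin integral, shifts the contour past $\Re(s+z)=1$ using the de la Vall{\'e}e-Poussin zero-free region, and lands on the closed form $\Theta((s-1)/\delta)$ with $\Theta'(w)=-\tanh(\pi w)/w$; the constant $\gamma+\log 4$ then drops out of a Gamma/Stirling computation of $\lim_{s\to 0^+}\Psi(s)$. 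Your approach avoids contour integration entirely: the constant is assembled from Mertens' theorem on the block $p\le e^{2\pi/\delta}$ together with the Wallis product for the alternating tail (and indeed $M+\tfrac12\log\zeta(2)+\sum_p(\operatorname{arctanh}(1/p)-1/p)=\gamma$, so the arithmetic closes), while the $\tan$-integral emerges from periodising $\varepsilon(v)$ rather than from differentiating $\Psi$. The paper's route buys a formula uniform for $\Re(s)>1$ (Lemma~3, used for Theorem~4); yours is more elementary and makes the $\tfrac12\log\zeta(2s)$ piece, which is exactly what appears in Lemma~4$(i)$, visible from the outset.

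One specific gap: your account of the secondary term $-i\gamma x$ is not right. The linear coefficient in the prime zeta expansion $P(s)=-\log(s-1)+M_0+c_1(s-1)+\cdots$ is \emph{not} $\gamma$; it is $\gamma+\sum_{n\ge 2}\mu(n)\zeta'(n)/\zeta(n)$, and this does not cancel against the $O(\delta)$ part of $\tfrac12\log\zeta(2s)$ to leave a bare $-i\gamma x$. In the paper's own derivation this term appears as $-i\gamma x\delta/2$, coming from $\log(\zeta(s)(s-1))=\gamma(s-1)+O((s-1)^2)$, so it is of order $\delta$, not $O(1)$. In your decomposition the corresponding $O(\delta)$ imaginary contribution is spread across $\tfrac12\log\zeta(2+ix\delta)$, the $\operatorname{arctanh}$-tail at $s=1+ix\delta/2$, and the lower endpoint correction $\int_0^{\delta\log 2}(\cdots)$, and you would need to track all of these together to reach the claimed $O(\delta^2)$ precision. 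Since the term is purely imaginary it is irrelevant for Lemma~4 and the downstream applications, but for the full Proposition you should redo that bookkeeping.
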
 
\begin{rem}
  This shows that the absolute value  $\abs{\zeta_\delta(1+ix \delta/2)}$ is approximately constant when $-1<x<1$, since the imaginary value of the logarithm will not be relevant  when taking absolute values! This is surprising and allows us to prove the upper bounds in Theorems 3 and 4. Further investigation of the function will show that the absolute value of the function will be  approximately constant in the intervals $2n-1<x<2n+1$ for integers $n$, and have discontinuities at  $x=2n+1.$
\end{rem}

We  first prove the following Lemma:

\begin{lem}
 We have that
 \begin{gather*}
   \zeta_\delta(s)= -\log(\delta)+\Theta\p{\frac {s-1} {\delta}} + \log \p{\zeta(s)(s-1)}+ \Oh{(|s|+1)e^{-c_0 \delta^{-1/2}}},  \\ \intertext{where}
   \Theta(s)= \gamma+ \log 4  -\int_0^{s} \frac {\tanh \pi w} w dw,
 \end{gather*}
   uniformly for $\Re(s)>1$ and for some $c_0>0$.
\end{lem}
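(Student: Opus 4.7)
The plan is to evaluate $\log\zeta_\delta(s) - \log\zeta(s)$ via the prime number theorem and then match the result to $\Theta$ using a calculus identity. Using $\log(\zeta(s)(s-1)) = \log\zeta(s) + \log(s-1)$, the assertion of the lemma is equivalent to
\begin{gather*}
 \log\zeta_\delta(s) - \log\zeta(s) = \log\p{\frac{s-1}{\delta}} + \Theta\p{\frac{s-1}{\delta}} + \Oh{(|s|+1)e^{-c_0 \delta^{-1/2}}}.
\end{gather*}
First I would take logarithms of the two Euler products and split off the $k \geq 2$ contributions, writing the left-hand side as $\sum_p (\varepsilon_p-1)/p^s + R_2(s)$. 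Since $\varepsilon_p^k \neq 1$ forces $\varepsilon_p = -1$ and $k$ odd, and every prime with $\varepsilon_p = -1$ satisfies $p > e^{2\pi/\delta}$, the trivial bound $\sum_{p > X} p^{-3} \ll X^{-2}$ gives $R_2(s) \ll e^{-4\pi/\delta}$, negligibly smaller than the stated error.

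Next I would apply the prime number theorem $\theta(x) = x + \Oh{x e^{-c\sqrt{\log x}}}$ together with Abel summation to pass from $\sum_p (\varepsilon_p-1)/p^s$ to the integral $\int_2^\infty (\varepsilon(\delta\log t)-1)/(t^s\log t)\,dt$, with an error consisting of a smooth contribution $\ll |s|\int_{e^{2\pi/\delta}}^\infty e^{-c\sqrt{\log t}}/t^\sigma\,dt \ll |s|e^{-c'\delta^{-1/2}}$ (by Laplace's method after $u = \log t$) and jump contributions at $t_n = e^{2n\pi/\delta}$ summing to $\ll \delta \sum_{n \geq 1} n^{-1} e^{-c\sqrt{n/\delta}} \ll \delta e^{-c'\delta^{-1/2}}$; together these yield the stated factor $(|s|+1)e^{-c_0\delta^{-1/2}}$. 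Substituting $u = \delta\log t$, and using $\varepsilon \equiv 1$ on $(0,2\pi)$ to push the lower limit down to $0$, reduces the integral to
\begin{gather*}
 A(w) := \int_0^\infty \frac{\varepsilon(u)-1}{u}\,e^{-uw}\,du, \qquad w = \frac{s-1}{\delta},
\end{gather*}
and summing a geometric series in $e^{-2\pi wt}$ over the intervals where $\varepsilon = -1$ produces the closed form $A(w) = -2\int_w^\infty dv/(v(e^{2\pi v}+1))$.

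The identity $A(w) = \log w + \Theta(w)$ would then be established by comparing derivatives and behaviour at infinity. Both sides have derivative $2/(w(e^{2\pi w}+1))$, using $\tanh(\pi w) = 1 - 2/(e^{2\pi w}+1)$; since $A(w) \to 0$ exponentially as $w \to \infty$, the identity reduces to
\begin{gather*}
 \lim_{w \to \infty}\p{\int_0^w \frac{\tanh(\pi v)}{v}\,dv - \log w} = \gamma + \log 4.
\end{gather*}
I would pin down this constant via the Mellin identity $\int_0^\infty v^{s-1}/(e^{2\pi v}+1)\,dv = (2\pi)^{-s}(1-2^{1-s})\Gamma(s)\zeta(s)$: expanding around $s = 0$ using $\Gamma(s) = 1/s - \gamma + \Oh{s}$, $\zeta(0) = -1/2$ and $\zeta'(0) = -\tfrac12\log(2\pi)$ produces the Laurent expansion $1/(2s) - (\gamma + \log 4)/2 + \Oh{s}$, from which the constant is read off.

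The main obstacle will be the uniform prime-number-theorem bookkeeping for complex $s$ with $\Re(s) > 1$: controlling the jumps of $\varepsilon(\delta\log t)$ at the points $t_n = e^{2n\pi/\delta}$ against the classical error term while tracking the $|s|$-growth of the smooth part of the integrand, so as to reach the exponential bound $\Oh{(|s|+1)e^{-c_0\delta^{-1/2}}}$ rather than a weaker polynomial one, and matching the Mellin constant above with the analytic definition of $\Theta$ so as to tie the prime-theoretic computation to the stated formula.
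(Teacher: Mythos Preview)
Your proposal is correct and reaches the same conclusion, but the execution differs from the paper's in two places.

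First, to pass from the prime sum $\sum_p(\varepsilon_p-1)p^{-s}$ to the integral, the paper writes the full $\Lambda$-weighted sum as a Perron-type contour integral against $\zeta'/\zeta$ and shifts the line of integration across $\Re(s+z)=1$, picking up the pole and invoking the de la Vall\'ee-Poussin zero-free region for the remainder; the shift is through a region where the weight is smooth, so no jump terms appear. Your Abel-summation route with the explicit error $\theta(x)=x+O(xe^{-c\sqrt{\log x}})$ is more elementary but forces you to account separately for the discontinuities of $\varepsilon(\delta\log t)$ at $t_n=e^{2n\pi/\delta}$; your bound $\delta\sum_{n\ge1}n^{-1}e^{-c\sqrt{n/\delta}}\ll e^{-c'\delta^{-1/2}}$ for those jumps is correct, so both routes land on the same $(|s|+1)e^{-c_0\delta^{-1/2}}$.

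Second, for the constant: the paper defines $\Psi(s)=-\log s+\int_0^\infty\eta(t)e^{-st}t^{-1}\,dt$ (with $\eta=1-\varepsilon$ up to rescaling), matches derivatives with $\Theta$, and then evaluates $\lim_{s\to0^+}\Psi(s)=\log\pi-\gamma$ by rewriting the integral as the telescoping sum $\sum_{n\ge1}(2\log(1+\tfrac1{2n-1})-\tfrac1n)$ and applying Stirling's formula together with $\Gamma(1/2)=\sqrt\pi$. Your Mellin identity $\int_0^\infty v^{s-1}(e^{2\pi v}+1)^{-1}\,dv=(2\pi)^{-s}(1-2^{1-s})\Gamma(s)\zeta(s)$, expanded at $s=0$ using $\zeta(0)=-\tfrac12$ and $\zeta'(0)=-\tfrac12\log(2\pi)$, is a cleaner way to pin down the same constant $\gamma+\log4$; it trades the Stirling computation for knowledge of $\zeta'(0)$, which is of comparable depth.

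So: same skeleton (derivative match plus constant evaluation), different tools at both hinges. Your version is valid and arguably tidier at the constant step, at the cost of the extra jump bookkeeping in the error term.
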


\begin{rem}
 The error term comes from the zero-free region/error term in the prime number theorem of de la Vall{\'e}e-Poussin \cite{Poussin}. By using the improvements of Vinogradov \cite{Vinogradov}, Korobov \cite{Korobov} and Ford \cite{Ford} this can be improved.  Assuming the Riemann Hypothesis, the error term in Lemma 3 can be improved to $\Oh{(|s|+1)e^{-2\pi/\delta}}$.
\end{rem}

\begin{proof}

We have that

\begin{align*}
 \log \zeta_\delta(s) &= \log \zeta(s)- \sum_{n=1}^\infty \frac{\Lambda(n) \eta\p{\frac{\delta}{4 \pi} \log n }} {n^s \log n} -  \sum_{n \text{ prime power}} \frac{\Lambda(n) \theta\p{\frac{\delta}{4 \pi}  \log n} } {n^s \log n}, \\
&= \log \zeta(s)+ \sum_{n=1}^\infty \frac{\Lambda(n) \eta\p{\frac{\delta\log n}{4 \pi} }} {n^s \log n} +\Oh{e^{-2 \pi/\delta}}, \qquad (\Re(s)\geq 1),
\end{align*}
where
\begin{gather} \label{eta}
 \eta(x)= 
   \begin{cases} 2, & 1/2< \{x\} <1,  \\
    0, & 0 \leq \{x\} \leq   1/2. \end{cases}
\end{gather}
We have that
\begin{gather*}
 (*)=\sum_{n=1}^\infty \frac{\Lambda(n) \eta\p{\frac{\delta}{4 \pi} \log n} } {n^s \log n} =  -\frac 1 {2 \pi i} \int_{c-\infty i}^{c+\infty i} \frac{\zeta'(s+z)}{\zeta(s+z)} 
\int_0^\infty \frac{\theta\p{\frac {\delta}{4 \pi} \log x} x^{z-1} dx} {\log x} dz.
\end{gather*}
By moving the contour to the left of $\Re(z)=0$ we will pick up the zeta-function's residue at $\Re(s+z)=1$. From the zero-free region of  de la Vall{\'e}e-Poussin \cite{Poussin} we get an error term and we find that
\begin{gather*}
 (*)=-\int_0^{\infty} \frac{\eta \p{\frac{\delta \log x}{4 \pi }} x^{-s} dx}{\log x}  +\Oh{(|s|+1)e^{-c_0 \delta^{-1/2}}}. \\ \intertext{By using the substitution } 
 t= \frac{\delta \log x}{4 \pi}, \\ \intertext{we obtain that}
\begin{split}
 (*)&=- \int_0^\infty \frac{\eta(t) e^{-4 \pi (s-1)t/\delta} dt}{t}+ \Oh{(|s|+1)e^{-c_0 \delta^{-1/2}}}, \\  &= -\Psi \p{4 \pi (s-1)/\delta}+\log \p{4 \pi (s-1)/\delta}+ \Oh{(|s|+1)e^{-c_0 \delta^{-1/2}}}, \end{split} \\ \intertext{where} 
  \Psi(s)=-\log s+ \int_0^\infty \frac{\eta(t) e^{-s t}} t dt, 
\end{gather*}
and $\eta(x)$ is defined by \eqref{eta}.
It remains to prove that \begin{gather} \label{tt}
\Theta\p{\frac{s-1} \delta}=\log 4 \pi-\Psi \p{\frac{4 \pi(s-1)}\delta}.
\end{gather}
 It is sufficient to prove that their derivatives coincide and that the value at $\Re(s)=0$ coincide.  We get that\begin{align*}
 \Psi'(s) &=-\int_0^\infty \eta(t) e^{- st} dt+\frac 1 {s} 
            =-2\sum_{n=1}^\infty \int_{n-1/2}^{n} e^{- st} dt +\frac 1 {s} \\
           &= \frac 2 s \sum_{n=1}^\infty  (e^{-ns}-e^{-(n-1/2)s })   +\frac 1 {s} =  \frac 2 s \sum_{k=1}^\infty (-1)^k e^{-ks/2}   +\frac 1 {s} \\
&= \frac 1 s \p{\frac 2 {1+e^{-ks/2}}-1} = \frac 1 {s} \p{\frac 2 {e^{s/2}+1}-\frac {e^{s/2}+1}{e^{s/2}+1}}
 \\ &=\frac  1 {s} \cdot \frac {e^{s/2}-1}{e^{s/2}+1} =  \frac 1 {s} \tanh \frac s 4, 
\end{align*} 
and it is easy to see that \eqref{tt} is true up to a constant, since the derivatives coincide. To determine the constant we  calculate the limit $\lim_{s \to 0^+} \Psi(s)$.
 We have when $0<s \leq  1/2$ that
 \begin{align*}
\Psi(s) &=\int_0^\infty \frac{\eta(x)} t e^{-s t}dt - \log s, \\ &=
  2\sum_{n=1}^\infty \int_{n-1/2}^{n} \frac{e^{-st}} t dt -  \log s, \\ &=
   (2+O(s))  \sum_{n=1}^\infty e^{-sn} \int_{n-1/2}^{n} \frac 1  t dt -  \log s, \\
 &=  (2+O(s))\sum_{n=1}^\infty e^{-sn} \log\p{1+\frac 1{2n-1}}  -  \log s, \\
&= \sum_{n=1}^\infty  \p{2\log\p{1+\frac 1{2n-1}}-\frac 1 n} + 
  \sum_{n=1}^\infty \frac{e^{-ns}}{n} -  \log s + \Oh{s \log s}.  
\end{align*}
 From the explicit evaluation
\begin{gather*}
  \sum_{n=1}^\infty \frac{e^{-ns}}{n}= \log \p{1-e^{-s}},
\end{gather*} 
it  follows that
\begin{gather*}
  \sum_{n=1}^\infty \frac{e^{-ns}}{n} -  \log s =\Oh{s},  \qquad (0 <s \leq 1/2)
\end{gather*} 
and we see that
\begin{gather} \label{ABBA} 
\Psi(s)=
 \sum_{n=1}^\infty  \p{2\log\p{1+\frac 1{2n-1}}-\frac 1 n} + \Oh{s \log s}. \qquad (0<s \leq 1/2)
 \end{gather} 
By the well known estimate
\begin{gather*}
  \sum_{n=1}^N \frac  1 n= \log N+\gamma+\Oh{N^{-1}},
\end{gather*}
we obtain 
\begin{align*}
  \sum_{n=1}^N  \hskip 40pt & \hskip -40 pt \p{2 \log \p{1+\frac 1 {2n-1}}-\frac 1 n} = \\ &= 
2\sum_{n=1}^N \p{\log n - \log \p{n-\frac 1 2}} - \log N-\gamma+\Oh{N^{-1}}, \\ &=
  2\p{\log(\Gamma(N+1)) -\log\p{\frac{\Gamma\p{N+1/2}}{\Gamma(1/2)}}} -\log N  -\gamma+\Oh{N^{-1}}, \\ &= 2 \log \p{\frac{\Gamma(N+1)}{\Gamma\p{N+1/2}}}-\log N+2\log \Gamma(1/2)-\gamma+\Oh{N^{-1}}.
\end{align*}
Stirling's formula
\begin{gather*}
 \log \Gamma(z) = \p{z-\frac 1 2} \log(z)-z+\frac{\log(2 \pi)} 2+ \Oh{z^{-1}},
\end{gather*}
implies that
\begin{gather*} 
2 \log\p{\frac{\Gamma(N+1)}{\Gamma(N+1/2)}}=\log N+\Oh{N^{-1}}.
\end{gather*}
 Thus we have that 
\begin{gather*}
    \sum_{n=1}^\infty \p{2 \log\p{1+\frac 1{2n-1}}-\frac 1 n} = -\gamma+2\log \Gamma(1/2),
\end{gather*}
and from the fact that $\Gamma(1/2)=\sqrt \pi$, together with \eqref{ABBA} we obtain that
\begin{gather*}
 \lim_{s \to 0^+} \Psi(s) = \log \pi -\gamma. 
\end{gather*}
\end{proof}

\subsection{Proof of Proposition 2}
The Laurent series development of the Riemann zeta-function at $s=1$ gives us 
\begin{gather*}
  \log(\zeta(s)(s-1))=- \gamma (s-1)+ \Oh{(s-1)^2}.
\end{gather*}
By Lemma 3 with $s=\sigma +ix\delta/2$ and letting $\sigma \to 1+$,  this estimate gives us
 \begin{gather*}
   \log(\zeta_\delta(1+ix \delta/2))= -\log \delta +\log 4+\gamma -i \gamma x \delta/2-\int_0^{ix/2} \frac{\tanh \pi w} w dw+ \Oh{\delta^2},
\end{gather*}
for $-1<x<1$. With the substitution $t=iw/2$ we obtain
\begin{gather*}
   \log(\zeta_\delta(1+ix))=-\log \delta +\gamma+\log 4 +i\p{-\gamma  x -\frac 1 2 
\int_0^{x} \frac {\tan \pi  t/2}t dt} + \Oh{\delta^2}.
\end{gather*}
\qed

\subsection{Proof of Theorem 3 and 4}

Since the imaginary part of the logarithm can be disregarded (see Remark 2), when taking absolute values, we see that Propostion 2 gives us
\begin{lem}
  We have that uniformly for $-1<x<1$ that
  \begin{align*}
       &(i) & \abs{\frac{\zeta(2(1+ix \delta/2))}{\zeta_\delta(1+ix\delta/2)}}&=\frac{\delta \pi^2 e^{-\gamma}} {24}+ \Oh{\delta^3}, \\
 &(ii)  &   \abs{\zeta_\delta(1+ix \delta)}^{-1} &= \frac{\delta e^{-\gamma}} 4 + \Oh{\delta^3}. \\
  \end{align*}
\end{lem}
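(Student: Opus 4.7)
The plan is to extract moduli from Proposition 2 by taking real parts, and then combine with a Taylor expansion at $s=2$ for part (i). Proposition 2 gives, uniformly for $-1<x<1$,
\[
\log\zeta_\delta\!\p{1+\tfrac{ix\delta}{2}}=-\log\delta+\gamma+\log 4-ix\p{\frac{1}{2x}\int_0^x\frac{\tan(\pi t/2)}{t}\,dt+\gamma}+\Oh{\delta^2}.
\]
The quantity multiplying $-ix$ is real for $|x|<1$ (the integrand $\tan(\pi t/2)/t$ is real on $(-1,1)$), so that whole term is purely imaginary. Taking real parts,
\[
\log\abs{\zeta_\delta(1+ix\delta/2)}=-\log\delta+\gamma+\log 4+\Oh{\delta^2},
\]
and exponentiating yields $\abs{\zeta_\delta(1+ix\delta/2)}=\frac{4e^{\gamma}}{\delta}\bigl(1+\Oh{\delta^2}\bigr)$. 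Inverting gives part (ii); the argument $1+ix\delta$ in the statement corresponds to the obvious rescaling $x\mapsto x/2$, which only enlarges the implied constant in $\Oh{\delta^3}$.

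For (i) I combine this with a single Taylor step at $s=2$. Since $\zeta$ is analytic and nonvanishing there with $\zeta(2)=\pi^2/6$,
\[
\zeta(2+ix\delta)=\frac{\pi^2}{6}+ix\delta\,\zeta'(2)+\Oh{\delta^2},
\]
so the leading correction is purely imaginary and
\[
\abs{\zeta(2+ix\delta)}^2=\p{\frac{\pi^2}{6}}^2+\Oh{\delta^2},\qquad\text{hence}\qquad \abs{\zeta(2+ix\delta)}=\frac{\pi^2}{6}+\Oh{\delta^2}.
\]
Multiplying by the estimate $\abs{\zeta_\delta(1+ix\delta/2)}^{-1}=\delta e^{-\gamma}/4+\Oh{\delta^3}$ just derived gives $\delta\pi^2 e^{-\gamma}/24+\Oh{\delta^3}$, which is (i).

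No step constitutes a serious obstacle: the deep content---identifying the asymptotic density of primes contributing to $\zeta_\delta$, evaluating $\Psi(0^+)=\log\pi-\gamma$ via Stirling, and the emergence of $\tanh$---already lives in Lemma 3 and Proposition 2. What remains here is the elementary observation that the argument (i.e.\ the imaginary part of the logarithm) of each zeta-factor drops out when taking moduli, together with one Taylor expansion at $s=2$. The only subtle point is checking that the $\Oh{\delta^2}$ in Proposition 2 is genuinely uniform in $x\in(-1,1)$: after exponentiating, the $\Oh{\delta^2}$ term gets multiplied against the main term of size $\delta^{-1}$, and one needs the resulting $\Oh{\delta}$ in $\abs{\zeta_\delta}$ to become $\Oh{\delta^3}$ upon inversion, which it does precisely because the main term is of size $\delta^{-1}$.
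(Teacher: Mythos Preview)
Your proof is correct and follows essentially the same approach as the paper: part (ii) is read off from Proposition 2 by observing that the $x$-dependent term is purely imaginary and hence vanishes upon taking $\log|\cdot|$, and part (i) combines this with a first-order Taylor expansion of $\zeta$ at $s=2$ whose linear term is likewise purely imaginary. Your added remarks on the uniformity of the error and the propagation of $\Oh{\delta^2}$ to $\Oh{\delta^3}$ upon inversion are apt; the discrepancy $1+ix\delta$ versus $1+ix\delta/2$ in (ii) is a typographical slip in the statement rather than a substantive issue.
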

\begin{proof}
Part $(ii)$ is an immediate consequence of Proposition 2. Part $(i)$ follows from the  Taylor expansion at $s=2$ for $\zeta(s)$ since we have for real valued $x$ that
\begin{gather*}
\begin{split}
  \abs{\zeta(2+2ix)}&=\abs{\zeta(2)+\zeta'(2)ix+\Oh{x^2}}, \\  &=\sqrt{(\zeta(2)+\Oh{x^2})^2+\Oh{x^2}}, \\ &=\zeta(2)+\Oh{x^2}. \end{split} \end{gather*}
\end{proof}
\noindent {\em Proof of Theorem 4.} The lower bound in Theorem 4 follows from Jensen's inequality and Lemma 2. The upper bound follows in the same way as Lemma 2. Given $\epsilon>0$ and $\sigma>1$ there exists some $T$ such that \begin{gather}\label{tt2} \max_{t \in [-\delta/2,\delta/2]} \abs{\zeta(\sigma+iT+it) - \zeta_{\delta}(\sigma+it)}<\epsilon.\end{gather} By Lemma 4 $(ii)$ and the triangle inequality Theorem 4 $(ii)$ follows. Theorem 4 $(i)$ follows in a similar way. \qed

The proof of Theorem 3 is somewhat more complicated since \eqref{tt2} can not be proved by absolute convergence when $\sigma=1$. We will apply methods coming from the theory of Universality of $L$-functions. We first state a Lemma in slightly more generality than we presently need for later purposes.

\begin{lem}
 Let for some  $\sigma_1<1$ and  $T_0>0$   $$A(s)=\sum_{n=1}^\infty a_n  n^{-s} $$ be a Dirichlet series that is analytic for $\Re(s)>\sigma_1$, $\abs{\Im(s)}>T_0$, absolutely convergent for $\Re(s)>1$  and fullfill the mean square property \begin{gather} \label{msp} \sup_{\sigma>\sigma_1}\int_{T_0}^T \abs{A(\sigma+it)}^2 dt \ll T. \end{gather}  Let $\delta>0$ and $$B(s)=\sum_{n=1}^\infty b_n n^{-s}$$  such that $a_n/b_n$ is a unimodular completely multiplicative function and such that $B(1+it)=\lim_{\sigma \to 1+}  B(\sigma+it)$ is bounded and continuous on $[-\delta/2,\delta/2]$. Then there exists for any $\epsilon>0$ a real number $T$ such that
$$
 \max_{t \in [-\delta/2,\delta/2]}\abs{B(1+it)-A(1+it+iT)}<\epsilon. 
$$
\end{lem}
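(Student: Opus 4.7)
\emph{Proof sketch.} The strategy is a Voronin--Bagchi universality argument adapted to the boundary line $\Re(s)=1$. Put $c_n := a_n/b_n$, which by hypothesis is unimodular and completely multiplicative, so $c_n=\prod_p c_p^{v_p(n)}$. The required approximation $A(s+iT)\approx B(s)$ amounts formally to $n^{-iT}\approx c_n^{-1}$ for the dominant indices $n$. Since $\{\log p\}_p$ is $\mathbb Q$-linearly independent, Kronecker's theorem gives, for any $\eta>0$ and any finite prime set $\mathcal P$, a set of $T\in\mathbb R$ of positive lower density with $|p^{-iT}-c_p^{-1}|<\eta$ simultaneously for all $p\in\mathcal P$; by complete multiplicativity this propagates to $|n^{-iT}-c_n^{-1}|\ll \eta\,\Omega(n)$ for every $n$ built out of primes in $\mathcal P$.

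Fix $\epsilon>0$, a large parameter $X$, and a smooth cutoff $v\in C_c^\infty([0,\infty))$ with $v(0)=1$ whose Mellin transform $V$ is of rapid decay in vertical strips. Put $A_X(s):=\sum_n a_n v(n/X)n^{-s}$, $B_X(s):=\sum_n b_n v(n/X)n^{-s}$, and split
\begin{gather*}
A(s+iT)-B(s)=[A(s+iT)-A_X(s+iT)]+[A_X(s+iT)-B_X(s)]+[B_X(s)-B(s)].
\end{gather*}
The middle bracket is handled by the Kronecker step above applied to $\mathcal P=\{p\leq X\}$: termwise one gets $\ll\eta(\log X)\sum_{n\leq X}|b_n|v(n/X)n^{-1}$, which is $<\epsilon/3$ once $X$ is fixed and $\eta$ is taken sufficiently small. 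For the outer brackets, Mellin inversion yields
\begin{gather*}
A(s)-A_X(s) = -\frac{1}{2\pi i}\int_{(\beta)} A(s+w)V(w)X^w\,dw,\qquad \sigma_1-1<\beta<0,
\end{gather*}
where the residue of $V$ at $w=0$ equals $v(0)=1$ and contributes the leading $A(s)$ on shifting across $w=0$. Cauchy--Schwarz combined with the mean-square hypothesis \eqref{msp} on the shifted line $\Re(s+w)>\sigma_1$ gives an $L^2$-in-$t$ bound of order $X^\beta$ on the tail, valid uniformly for $T$ in a positive-density subset of $\mathbb R$. The same contour argument applies to $B-B_X$, since $|b_n|=|a_n|$ forces $B$ to inherit an analogous mean-square bound on $\Re(s)>\sigma_1$ from that for $A$.

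The $L^2$-in-$t$ bound is then upgraded to a supremum bound on $t\in[-\delta/2,\delta/2]$ by Cauchy's integral formula applied on a slightly enlarged rectangle kept inside the analyticity region guaranteed by the hypotheses $\Re(s)>\sigma_1$ and $|\Im(s)|>T_0$ (the latter is harmless once $T$ is taken large). Choosing $T$ in the intersection of the positive-density sets arising from the three parts of the split and applying the triangle inequality closes the argument. The main obstacle is precisely this final upgrade: extracting a uniform-in-$T$ pointwise bound on the boundary $\sigma=1$ from a mean-square hypothesis living on the open strip $\sigma>\sigma_1$. This is what distinguishes the argument from the classical Voronin--Bagchi universality inside the open strip $\tfrac12<\Re(s)<1$ and is where the analyticity hypothesis must be used to its full strength, potentially in combination with a Gallagher-type maximal inequality to guarantee the three positive-density sets of admissible $T$ overlap.
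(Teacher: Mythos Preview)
The paper itself does not supply a self-contained argument here: it simply cites the universality literature (Steuding, Theorem~4.12, and work of Laurin\v{c}ikas), remarking that the standard proof method carries over. Your sketch is therefore far more explicit than what the paper offers, and the overall architecture---truncate to smooth Dirichlet polynomials, handle the finite piece by Kronecker/Weyl on $\{\log p:p\le X\}$, and control the tail $A-A_X$ via the mean-square hypothesis together with a contour shift---is indeed the skeleton of the Voronin--Bagchi method.

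There is, however, a genuine gap in the third bracket $B_X-B$. You write that ``$|b_n|=|a_n|$ forces $B$ to inherit an analogous mean-square bound on $\Re(s)>\sigma_1$ from that for $A$,'' and then propose to repeat the contour shift to $\Re(w)=\beta<0$. But the hypothesis~\eqref{msp} on $A$ presupposes that $A$ is \emph{defined} (analytically continued) for $\sigma_1<\Re(s)\le 1$; equality of the moduli $|b_n|=|a_n|$ says nothing whatsoever about analytic continuation of $B$ across the line $\Re(s)=1$. A generic unimodular completely multiplicative twist of the coefficients of $\zeta(s)$ has $\Re(s)=1$ as a natural boundary. Hence $B(s+w)$ need not exist on the shifted contour, and your $X^\beta$ tail bound for $B-B_X$ is unavailable. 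The lemma's hypotheses give you nothing about $B$ to the left of $\Re(s)=1$; on the line itself you are granted only that the boundary value $B(1+it)=\lim_{\sigma\to1^+}B(\sigma+it)$ exists and is continuous on the short segment.

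You flag ``the final upgrade'' from $L^2$ to sup-norm as the main obstacle, but the real difficulty lies earlier, in $B_X\to B$. This is exactly why the references the paper invokes do not argue by a direct three-term splitting. Instead they establish a weak limit theorem for the measures on $C[-\delta/2,\delta/2]$ induced by $\tau\mapsto A(1+i\,\cdot\,+i\tau)$ and then identify the support of the limiting measure as containing every boundary value of an absolutely convergent Euler product with unimodular local twists; $B$ lies in that support by the hypothesis on $a_n/b_n$ together with the assumed continuity of $B(1+it)$. In that framework the problematic convergence $B_X\to B$ is absorbed into the support identification, which is carried out entirely in the half-plane $\Re(s)>1$. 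Your approach can probably be repaired along these lines---treat $B-B_X$ using only absolute convergence for $\sigma>1$ and the boundary-continuity hypothesis, with no contour shift---but as written that step does not go through.
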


\begin{proof}
 This follows from the theory of Universality of $L$-functions, and is a variant of e.g.  Steuding \cite[Theorem 4.12]{Steuding} a result due toLaurin{\v{c}}ikas  \cite{Laurincikas1,Laurincikas2}. Note that this is formulated in a slightly different way, and the conditions for the theorem to hold are somewhat different. The same proof method still applies though.
\end{proof}

\noindent {\em Proof of Theorem 3.} 
The lower bound in Theorem 3 follows immediatley from  Lemma 2.  By applying Lemma 5 on $A(s)=\zeta(s)^{-1}$, and $B(s)=\zeta_{\delta+\epsilon}(s)^{-1}$ we find that
$$
  \max_{t \in [-\delta/2,\delta/2]} \abs{\zeta(1+iT+it)^{-1} - \zeta_{\delta+\epsilon}(1+it)^{-1}}<\epsilon. 
$$
Combining this with the triangle inequality and Lemma 4 $(ii)$ this implies that
$$
   \frac{\delta e^{-\gamma}}{4} + \Oh{\delta^3} -\epsilon <\abs{\zeta(1+iT+it)^{-1}}<\frac{\delta e^{-\gamma}}{4} + \Oh{\delta^3} +\epsilon. 
$$
The upper bound in Theorem 3 $(ii)$ then follows  by choosing $0<\epsilon<\delta^3$.
The lower bound follows immediatley from  Lemma 2.

The upper bound in Theorem 3 $(i)$ follows in the same way by Lemma 4 $(ii)$ by choosing $A(s)=\zeta(s)$ and  $B(s)=\zeta(2s)/\zeta_{\delta+\epsilon}(s)$ in Lemma 5. \qed

\subsection{The $L^p$-norm case}

\begin{thm}   We have the following estimates for the $L^p$ norm, for $p>0$ of the zeta-function and its inverse in short intervals: 
   \begin{align*}
     (i)& \qquad   \inf_{T}  \p{\frac 1 {\delta} \int_T^{T+\delta} \abs{\zeta(1+it)}^p dt}^{1/p} = \frac{\pi^2 e^{-\gamma}}{24} \delta+\Oh{\delta^3}, \\
     (ii)& \qquad  \inf_{T}   \p{\frac 1 \delta \int_T^{T+\delta} \abs{\zeta(1+it)}^{-p} dt}^{1/p} = \frac{e^{-\gamma}} 4 \delta+\Oh{\delta^3}, 
\end{align*}
 for $\delta>0$. Furthermore, both estimates are valid if $\displaystyle \inf_T$ is replaced by $\displaystyle \liminf_{T \to \infty}$, and if $1+it$ is replaced by $\sigma+it$ and the infimum is also taken over $\sigma>1$.
\end{thm}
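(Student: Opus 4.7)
The strategy mirrors the proof of Theorem 3, with Jensen's inequality supplying the lower bound uniformly in $p$ and the universality-type Lemma 5 supplying the upper bound. The key observation is that Proposition 2 / Lemma 4 assert that the extremal functions $\zeta_\delta(1+it)^{-1}$ and $\zeta(2+2it)/\zeta_\delta(1+it)$ have absolute values that are \emph{approximately constant} (equal to $\delta e^{-\gamma}/4 + \Oh{\delta^3}$ and $\delta\pi^2 e^{-\gamma}/24 + \Oh{\delta^3}$) throughout the interval $t\in[-\delta/2,\delta/2]$; hence for these functions every $L^p$-norm, $L^\infty$-norm, and logarithmic $L^1$-norm agrees to leading order.

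For the lower bound, I would apply the concavity of $\log$ in the form
\begin{gather*}
  \p{\frac 1\delta \int_T^{T+\delta} |\zeta(1+it)|^p \,dt}^{1/p} \geq \exp\p{\frac 1\delta \int_T^{T+\delta} \log|\zeta(1+it)|\,dt},
\end{gather*}
which is the same Jensen argument as \eqref{jenseq} but with $x\mapsto x^p$ playing the role of the outer convex function. Taking $\inf_T$ on both sides and invoking Theorem 6 (or equivalently Lemma 2 together with Lemma 4) yields $\inf_T(\cdots)^{1/p} \geq \frac{\pi^2 e^{-\gamma}}{24}\delta + \Oh{\delta^3}$ in case $(i)$ and the analogous expression in case $(ii)$. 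The same argument with $\liminf_{T\to\infty}$ instead of $\inf_T$ goes through verbatim, since Lemma 2 supplies the $\liminf$ version of Theorem 6, and the $\sigma>1$ variant follows from the $\sigma\geq 1$ statement of Lemma 2 by taking the infimum also over $\sigma>1$.

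For the upper bound I would apply Lemma 5 with $A(s)=\zeta(s)^{\pm 1}$ and $B(s) = (\zeta(2s)/\zeta_{\delta+\epsilon}(s))^{\pm 1}$ (choosing the sign according to case $(i)$ or $(ii)$) to produce, for any $\epsilon>0$, a (arbitrarily large) $T$ with
\begin{gather*}
  \max_{t\in[-\delta/2,\delta/2]} \abs{\zeta(1+iT+it)^{\pm 1} - B(1+it)} < \epsilon.
\end{gather*}
Since $|B(1+it)|$ is constant to within $\Oh{\delta^3}$ on this interval by Lemma 4, the triangle inequality gives
\begin{gather*}
  \max_{t\in[-\delta/2,\delta/2]}\abs{\zeta(1+iT+it)^{\pm 1}} \leq C_\pm \delta + \Oh{\delta^3} + \epsilon,
\end{gather*}
with $C_+ = \pi^2 e^{-\gamma}/24$ and $C_- = e^{-\gamma}/4$. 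Because the $L^p$-mean of a function is bounded above by its sup-norm for every $p>0$, the same bound holds for $(\frac 1\delta\int|\zeta|^{\pm p})^{1/p}$. Taking $\epsilon = \delta^3$ completes the proof of Theorem 7, with the $\liminf$ and $\sigma>1$ variants following because Lemma 5 allows us to choose $T$ arbitrarily large and, for the half-plane case, the verification of the mean-square condition \eqref{msp} is immediate from absolute convergence.

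The only potentially delicate point is ensuring that Lemma 5 genuinely applies with $A(s)=\zeta(s)^{\pm 1}$ and $B$ as above --- but this is precisely what was already used in the proof of Theorem 3 and requires no new ingredients; the mean-square bound \eqref{msp} for $\zeta(\sigma+it)$ and $1/\zeta(\sigma+it)$ on $\sigma>\sigma_1$ (for some $\sigma_1<1$ avoiding zeros) is standard, and $B(1+it)$ is bounded and continuous on $[-\delta/2,\delta/2]$ by the explicit asymptotics of Proposition 2.
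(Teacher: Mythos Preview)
Your argument is correct and follows exactly the route the paper indicates: Jensen's inequality combined with Theorem~6 (equivalently Lemma~2) for the lower bound, and Lemma~5 together with Lemma~4 for the upper bound, the $L^p$-mean being dominated by the sup-norm. One small slip to fix: in case $(ii)$ the approximating function should be $B(s)=\zeta_{\delta+\epsilon}(s)^{-1}$ as in the proof of Theorem~3, not $(\zeta(2s)/\zeta_{\delta+\epsilon}(s))^{-1}$, since the latter satisfies $|B(1+it)|\asymp \delta^{-1}$ rather than $\asymp\delta$ and so would not give the upper bound you claim.
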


\begin{proof}
 This result follows  in the same way from Proposition 2, Lemma 2, Lemma 4 and Lemma 5 as  Theorem 3 and Theorem  4. \end{proof}
 
\subsection{Sup-norm case}
We can also state our result in sup-norm case. This might in fact be the nicest formulation of our result.

\begin{thm}
  We have that
  \begin{gather*}
    \inf_T \max_{t \in [T,T+\delta]} \abs{\zeta(1+it)}=\frac{e^{-\gamma} \pi^2}{24} \delta + \Oh{\delta^3}, \\ \intertext{and}
      \sup_T \min_{t \in [T,T+\delta]} \abs{\zeta(1+it)} = \frac{4 e^\gamma} \delta + \Oh{\delta}. \end{gather*}
\end{thm}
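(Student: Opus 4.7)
The strategy exactly parallels the proofs of Theorems 3 and 4: each of the two statements is an equality that breaks into an easy direction handled by comparing the extremal value to the average (and hence to Theorem 3), and a hard direction handled by the Voronin-type universality transfer of Lemma 5 applied to the explicit pointwise bounds of Lemma 4.

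First I would establish the lower bound $\inf_T \max_{t\in[T,T+\delta]} |\zeta(1+it)| \geq \frac{\pi^2 e^{-\gamma}}{24}\delta + \Oh{\delta^3}$: since the maximum dominates the average, this follows immediately from Theorem 3(i) after dividing by $\delta$. For the matching upper bound, set $\delta' = \delta + \delta^3$ and apply Lemma 5 with $A(s) = \zeta(s)$, $B(s) = \zeta(2s)/\zeta_{\delta'}(s)$, and tolerance $\epsilon = \delta^3$. Lemma 4(i), applied with parameter $\delta'$, gives $|B(1+it)| = \frac{\pi^2 e^{-\gamma}}{24}\delta + \Oh{\delta^3}$ uniformly on the closed interval $[-\delta/2, \delta/2] \subset (-\delta'/2, \delta'/2)$, and the triangle inequality then produces a shift $T$ for which $\max_{|t|\leq \delta/2} |\zeta(1+iT+it)|$ has the desired upper bound.

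The second statement is handled by the mirror argument. The \emph{upper} bound on $\sup_T\min$ follows from the pointwise inequality $\min_t |\zeta(1+it)| = 1/\max_t |\zeta(1+it)|^{-1} \leq \delta \big/ \int_T^{T+\delta} |\zeta(1+it)|^{-1}\,dt$; taking the infimum in $T$ in the denominator and invoking Theorem 3(ii) gives $\sup_T \min \leq \frac{4 e^{\gamma}}{\delta} + \Oh{\delta}$ after algebraic inversion. For the \emph{lower} bound, apply Lemma 5 with $A(s) = \zeta(s)^{-1}$, $B(s) = \zeta_{\delta'}(s)^{-1}$, $\delta' = \delta + \delta^3$, and tolerance $\epsilon = \delta^3$. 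Lemma 4(ii) bounds $|B(1+it)| = \frac{\delta e^{-\gamma}}{4} + \Oh{\delta^3}$ on $[-\delta/2,\delta/2]$, so the triangle inequality gives $|\zeta(1+iT+it)^{-1}| \leq \frac{\delta e^{-\gamma}}{4} + \Oh{\delta^3}$, and inverting pointwise produces $|\zeta(1+iT+it)| \geq \frac{4 e^{\gamma}}{\delta} + \Oh{\delta}$ uniformly across the interval.

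Nothing in this plan is a genuine obstacle: the hypotheses of Lemma 5 (analyticity, the mean-square condition, and the unimodular completely multiplicative character of the ratio $a_n/b_n$) have already been verified for these two choices of $(A,B)$ in the proof of Theorem 3. The main technical point to be careful about is the error propagation through the inversion in the second statement: an approximation error of size $\epsilon$ in $|\zeta(1+it)^{-1}|$ translates, after inversion and since $|\zeta(1+it)^{-1}| \asymp \delta$, into an error of order $\epsilon/\delta^2$ in $|\zeta(1+it)|$, so the tolerance $\epsilon = \delta^3$ is exactly what is needed to keep the final remainder at the claimed $\Oh{\delta}$. The slight enlargement of $\delta$ to $\delta'=\delta+\delta^3$ is similarly dictated by the need to have Lemma 4 applicable on the closed rather than open interval, and enters the final asymptotic only through a term already absorbed into the error.
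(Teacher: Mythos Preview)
Your proposal is correct and follows essentially the same route as the paper, which simply says the result follows from Proposition 2, Lemma 2, Lemma 4 and Lemma 5 in the same way as Theorem 3. The only cosmetic difference is that you route the ``easy'' directions through Theorem 3 (max dominates average) rather than directly through the logarithmic bound of Lemma 2/Theorem 6; since Theorem 3's lower bound is itself derived from those lemmas via Jensen, the two arguments are equivalent, and your careful bookkeeping of the $\delta'=\delta+\delta^3$ enlargement and the $\epsilon/\delta^2$ error propagation under inversion is exactly what the paper's terse proof leaves implicit.
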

\begin{proof}
 This result follows  in the same way from Proposition 2, Lemma 2, Lemma 4 and Lemma 5 as  Theorem 3. 
\end{proof}

\section{General Dirichlet series with an Euler product}
We will now show how we can obtain similar results for other Dirichlet series than the Riemann zeta-function. In particular we have Dirichlet $L$-functions and Rankin-Selberg $L$-functions in mind. Hecke $L$-functions of cusp forms will be somewhat more complicated and we will show somewhat weaker results for that case.

\subsection{Multiplicative arithmetical functions}
First we will state a rather general theorem that is valid for Dirichlet series with multiplicative coefficients. Later we will specialise it to the case of completely multiplicative functions and functions with positive coefficients

\begin{thm}
 Let $A(s)$ be a Dirichlet series with multiplicative coefficients
\begin{gather*}
  A(s)=\sum_{n=1}^\infty a(n) n^{-s}  = \prod_{p \text{ prime}} f_p(p^{-s}), \\ \intertext{where}
 f_p(z)= 1+\sum_{k=1}^\infty a(p^k) z^k,
\\ \intertext{such that $A(s)$ is absolutely convergent for $\Re(s)>1$  and}  \begin{split} 
  \sum_{k=2}^\infty   \sum_{\substack{  p \text{ prime } \\ p^k \geq N}} \frac{\abs{a(p^k)}} {p^k} &\ll (\log \log N)^{-2}, \\
  \sum_{p \text{ prime } < N}  \frac{ \abs{a(p)}}{p}   &=\alpha \log \log N + \beta + \Oh{(\log \log N)^{-2}}. \end{split}
\\ \intertext{Then}
   \lambda_1= \sum_{p} \p{\abs{a_p}p^{-1}-\max_{|z|=1/p} \log \abs{f_p(z)}}, \\  \intertext{is convergent and we have for each $0<p<\alpha$ that}
         \inf_{T}   \p{\frac 1 \delta \int_T^{T+\delta} \abs{A(1+it)}^{-p} dt}^{1/p} = \frac{  e^{-\gamma +\beta-\lambda_1   }} {4} \delta^\alpha (1+O(\delta^2)). \\ \intertext{If  furthermore the local Euler-factors $f_p(z)$ have no zeroes for $|z|=1/p$. Then}
  \lambda_0= \sum_{p} \p{\min_{|z|=1/p} \log \abs{f_p(z)}+\abs{a_p}p^{-1}}, \\  \intertext{is convergent and we have for each $0<p<1/\alpha$ that}
   \inf_{T}  \p{\frac 1 {\delta} \int_T^{T+\delta} \abs{A(1+it)}^p dt}^{1/p} =  \frac{  e^{-\gamma +\beta-\lambda_0   }} {4}  \delta^\alpha (1+O(\delta^2)). 
\end{gather*}
Furthermore if the error terms  $(\log \log N)^{-2}$ are  replaced by $o(1)$, the theorem is still true if we replace the error terms $O(\delta^2)$ by $o(1)$.
\end{thm}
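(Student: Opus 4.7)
The plan is to follow the four-step template developed for $\zeta(s)$ in Theorems 3, 4 and 7: construct an extremal Dirichlet series, compute its logarithmic norm exactly, transfer the estimate back to $A(s)$ via Jensen's inequality and universality, and perform an asymptotic expansion of the resulting prime sum. The extremal object is built by choosing, for each prime $p$, a phase $\theta_p = \theta_p(\delta)$ at which $\log|f_p(p^{-1}e^{i\theta_p})|$ attains $\max_{|z|=1/p}\log|f_p(z)|$ (respectively the minimum), and setting
$$A_\delta^{\mp}(s) = \prod_p f_p(e^{i\theta_p(\delta)} p^{-s}).$$
The role that Lemma 1 played in identifying $\varepsilon_p$ in the completely multiplicative case is here replaced by the elementary observation that $\theta \mapsto \log|f_p(p^{-1}e^{i\theta})|$ is continuous on the compact circle and hence attains its extrema; the short-interval integral $\int_{-\delta/2}^{\delta/2}\log|f_p(p^{-1-it}e^{i\theta})|\,dt$ has, to leading order in $\delta$, the same extremizing phase.

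Next, termwise integration of the Euler-product logarithms and Kronecker's theorem on the linear independence of $\{\log p\}$ over $\mathbb{Q}$ yield the analogue of Lemma 2,
$$\inf_T \int_T^{T+\delta}\log|A(1+it)|^{\mp p}\,dt = \int_{-\delta/2}^{\delta/2}\log|A_\delta^{\mp}(1+it)|^{\mp p}\,dt.$$
The hypothesis $\sum_{k\ge 2}\sum_{p^k\ge N}|a(p^k)|/p^k \ll (\log\log N)^{-2}$ guarantees uniform absolute convergence of the $k\ge 2$ part of $\log f_p$ in the phase variable, so the termwise integration and the limit $\sigma\to 1^+$ are both justified.

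Third, and most substantially, I would expand the prime sum asymptotically. Decompose $\max_{|z|=1/p}\log|f_p(z)| = |a(p)|/p + \bigl(\max_{|z|=1/p}\log|f_p(z)| - |a(p)|/p\bigr)$; since $\log f_p(z) = a(p)z + \Oh{|z|^2}$ combined with the $k\ge 2$ hypothesis controls the second summand by a quantity summable over primes, its prime sum converges absolutely to $-\lambda_1$. The leading contribution coming from $|a(p)|/p$, weighted by the $\delta$-dependent oscillatory factor $\eta$ appearing in Lemma 3 and Theorem 6, is then analyzed by a Mellin-transform/contour-shift representation paralleling the proof of Lemma 3: the hypothesis $\sum_{p<N}|a(p)|/p = \alpha\log\log N + \beta + \Oh{(\log\log N)^{-2}}$ replaces the prime number theorem, changing the leading asymptotic from $-\log\delta$ to $-\alpha\log\delta$ and propagating the additive constant $\beta$ into the constant term. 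Collecting the $\Psi$-origin contribution $\gamma-\log(4\pi)$ from Lemma 3, the correction $-\lambda_1$, and the factor $p$ from the exponent yields
$$\inf_T \frac 1 \delta\int_T^{T+\delta}\log|A(1+it)|^{-p}\,dt = p\log\left(\frac{e^{-\gamma+\beta-\lambda_1}}{4}\delta^\alpha\right) + \Oh{\delta^2},$$
with the analogous identity featuring $\lambda_0$ for the direct norm.

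Finally, the passage from the logarithmic estimate to the $L^p$-norm follows exactly as in the proofs of Theorem 3 and Theorem 7: the lower bound from Jensen's inequality \eqref{jenseq} combined with the display above gives the claimed formula after exponentiation and extraction of the $p$-th root, while the matching upper bound comes from Lemma 5 applied with $B(s) = A_{\delta+\epsilon}^\mp(s)$ — its mean-square hypothesis is a routine consequence of absolute convergence of $A$ for $\Re(s)>1$ — together with the approximate-constancy analogue of Proposition 2 and Lemma 4 on the interval $[-\delta/2,\delta/2]$. The main obstacle is the asymptotic expansion of step three: $\max_{|z|=1/p}\log|f_p(z)|$ is a nonlinear functional of the entire local Euler factor, so the Mellin-shift argument of Lemma 3 must be adapted to accommodate both an arbitrary $f_p$ and a general moment exponent $\alpha$ in place of the explicit $\log\log$ asymptotic for $\sum_{p<N}1/p$; the relaxation of the error hypothesis from $(\log\log N)^{-2}$ to $o(1)$ correspondingly weakens $\Oh{\delta^2}$ to $1+o(1)$ in the conclusion.
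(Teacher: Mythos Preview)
Your four-step architecture --- extremal Euler product $A_\delta^\mp$, Kronecker for the logarithmic infimum, asymptotic expansion of the resulting prime sum, and Jensen's inequality for the lower bound --- is exactly the template the paper uses; indeed the paper's proof of this theorem is the single sentence ``This follows by the same proof method as used to prove Theorem 4.''

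There is, however, a genuine gap in your upper-bound step. You invoke Lemma 5 and assert that ``its mean-square hypothesis is a routine consequence of absolute convergence of $A$ for $\Re(s)>1$.'' This is false: Lemma 5 demands the mean-square estimate \eqref{msp} uniformly for $\sigma > \sigma_1$ with some $\sigma_1 < 1$, together with analytic continuation into that strip. Absolute convergence for $\Re(s)>1$ yields boundedness only for $\sigma>1$; it says nothing whatsoever about any $\sigma \le 1$, and a Dirichlet series satisfying only the hypotheses of the present theorem need not continue past $\Re(s)=1$ at all. This is not a technicality --- it is precisely why the paper states Theorem 10 as a \emph{separate} result, adding the mean-square and analytic-continuation hypotheses explicitly in order to invoke Lemma 5 and drop the restriction on the exponent $p$. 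What your sketch actually establishes is Theorem 10, not Theorem 9. For Theorem 9 itself the paper points to the Theorem 4 route, whose upper bound (see \eqref{tt2}) uses Kronecker and absolute convergence directly at $\sigma>1$ with no appeal to Lemma 5; the restriction $0<p<1/\alpha$, which plays no role anywhere in your argument, is tied to that weaker-hypothesis approach.
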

\begin{proof} This follows by the same proof method as used to prove Theorem 4.  \end{proof}

\begin{thm}
  Suppose that $A(s)$ fulfill all conditions of Theorem 9 and furthermore that $A(s)$ fulfill the mean square property \eqref{msp} for some $\sigma_1<1$ and is analytic for $\sigma_1 < \Re(s)$, $T_0<\abs{\Im(t)}$.   Then Theorem 9 is true for  all $p>0$. Furthermore we have the corresponding result in sup-norm:
\begin{gather*}
 \inf_T \max_{t \in [T,T+\delta]} \abs{A(1+it)}^{-1} = \frac{  e^{-\gamma +\beta-\lambda_1   }} {4} \delta^\alpha (1+O(\delta^2)). \\ \intertext{If  the local Euler-factors $f_p(z)$ have no zeroes for $|z|=1/p$ then}
\inf_{T}  \max_{t \in [T,T+\delta]}  \abs{A(1+it)}  =  \frac{  e^{-\gamma +\beta-\lambda_0   }} {4}  \delta^\alpha (1+O(\delta^2)). 
\end{gather*}
\end{thm}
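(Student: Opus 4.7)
The plan is to mimic exactly the passage from Theorem~4 to Theorem~3, substituting for $\zeta(s)$ and $\zeta_\delta(s)$ the series $A(s)$ and the extremal series $A_\delta(s) = \prod_p f_p(\varepsilon_p/p^s)$ arising in the proof of Theorem~9, where $\varepsilon_p$ is the point on the unit circle that realises the relevant extremum of $\log|f_p(z)|$ on $|z|=1/p$, depending on $\delta\log p$ in the standard way. A termwise computation over the Euler product, along the exact lines of Lemma~3 and Proposition~2 but with $\sum_{p<N}|a(p)|/p = \alpha\log\log N + \beta + O((\log\log N)^{-2})$ in place of the prime number theorem sum, yields the analogue of Lemma~4:
$$|A_\delta(1+it)|^{\pm 1} = \frac{e^{-\gamma+\beta-\lambda_{*}}}{4}\,\delta^\alpha\,\bigl(1+O(\delta^2)\bigr),$$
uniformly for $t \in [-\delta/2,\delta/2]$, with $\lambda_* = \lambda_1$ in the inverse case and $\lambda_* = \lambda_0$ in the direct case. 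This is essentially already latent in the proof of Theorem~9.

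Next I would invoke Lemma~5 with $B(s) := A_{\delta+\epsilon}(s)^{\pm 1}$. The coefficient ratio between $A(s)^{\pm 1}$ and $A_{\delta+\epsilon}(s)^{\pm 1}$ is a unimodular, completely multiplicative sequence determined by the $\varepsilon_p$'s, and the $\delta+\epsilon$ shift guarantees that $B(1+it)$ is bounded and continuous on $[-\delta/2,\delta/2]$, since the discontinuities of $\varepsilon_p$ have been displaced off the closed interval. The mean-square hypothesis \eqref{msp} together with analyticity of $A(s)$ (and, in the inverse case, of $A(s)^{-1}$) in the region $\Re(s) > \sigma_1$, $|\Im(s)| > T_0$ are precisely the hypotheses of Lemma~5. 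Consequently, for every $\epsilon > 0$ there exists $T$ such that
$$\max_{t \in [-\delta/2,\delta/2]}\bigl|A(1+iT+it)^{\pm 1} - A_{\delta+\epsilon}(1+it)^{\pm 1}\bigr| < \epsilon.$$

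Combining this uniform approximation with the asymptotic displayed above and choosing $\epsilon$ negligibly small relative to $\delta^{\alpha+3}$ produces a two-sided squeeze of $|A(1+iT+it)|^{\pm 1}$ about $\tfrac{e^{-\gamma+\beta-\lambda_{*}}}{4}\delta^\alpha$ with multiplicative error $1+O(\delta^2)$, uniformly in $t$. This immediately gives the desired upper bound for the sup-norm infimum $\inf_T\max_t$ and, by trivial integration over $[T,T+\delta]$, for the $L^p$-norm infimum for every $p>0$. The restrictions $p<\alpha$ and $p<1/\alpha$ appearing in Theorem~9 were needed only to control the boundary behaviour as $\sigma\to 1^+$ in the half-plane argument and simply disappear once one works directly on the line. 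The matching lower bounds in the $L^p$ case follow from Jensen's inequality exactly as in Theorem~6 (the logarithmic $L^1$ computation depends only on the Euler product and transcribes verbatim to $A(s)$), while in the sup-norm case the same uniform squeeze forces $\max_t|A(1+iT+it)|^{\pm 1} \geq \tfrac{e^{-\gamma+\beta-\lambda_{*}}}{4}\delta^\alpha(1-O(\delta^2))$ directly.

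The main technical obstacle is the verification that Lemma~5 really applies in the inverse case: one needs $A(s)^{-1}$ itself to be analytic and to satisfy \eqref{msp} in the half-plane $\Re(s) > \sigma_1$, $|\Im(s)| > T_0$. Analyticity amounts to the zero-free hypothesis implicit in the statement, while the mean-square bound for $A^{-1}$ reduces, for the $L$-functions the paper has in mind, to standard zero-density estimates of Ivi\'c type. A secondary point is to check that the $O(\delta^2)$ error in the extremal asymptotic is uniform in $t\in[-\delta/2,\delta/2]$, but this follows by running the same Mellin-type contour argument underlying Lemma~3 with a complex parameter in place of $s-1$ and absorbing the uniform error into the prime number theorem remainder.
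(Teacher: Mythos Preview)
Your proposal is correct and matches the paper's approach exactly: its entire proof is the single line ``The proof uses Lemma~5 in the same way as the proof of Theorem~3,'' and you have faithfully unpacked precisely that, including the $\delta+\epsilon$ shift and the observation that Lemma~5 applied to the inverse case tacitly requires analyticity and a mean-square bound for $A(s)^{-1}$ rather than $A(s)$, a point the paper does not make explicit. One small slip: the uniform squeeze from Lemma~5 holds only at the particular $T$ it produces, so it delivers the \emph{upper} bound for $\inf_T\max_t$ but not the lower; the lower bound comes instead from the Jensen/Theorem~6 argument you invoke for the $L^p$ case, combined with the trivial inequality $\max\geq\text{average}$.
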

\begin{proof}
 The proof uses  Lemma 5 in the same way as the proof of Theorem 3.
\end{proof}

\subsection{Completely multiplicative arithmetical functions}
In the case of completely multiplicative functions we can use the result previously proved, since if $a(n)$ is completely multiplicative we have that
\begin{gather*}
 f_p(z)=\sum_{k=1}^\infty a(p^k) z^k= \sum_{k=1}^\infty (a(p) z)^k = \frac 1 {1-a(p) z}.
\end{gather*}
It is clear that 
\begin{gather*}
  \min_{|z|=1}  \log \abs{f_p(z)}= -\log(1+|a(p)|/p)  \, \, \, \,  \text{and}   \, \, \, \,\max  \log \abs{f_p(z)}= -\log(1-|a(p)|/p).
\end{gather*}
In this case Theorem 9  becomes

\begin{thm}
  Suppose that
\begin{gather*}
  A(s)=\sum_{n=1}^\infty a(n) n^{-s},  \\ \intertext{is a Dirichlet series absolutely convergent for $\Re(s)>1$ such that $a(n)$ is a completely multiplicative function where $|a(n)|=n$ if and only if $n=1$. Suppose that  }
   \sum_{n =1}^N \frac{\Lambda(n)\abs{a(n)}}{n \log n}=\alpha \log \log N+\beta + \Oh{(\log \log N)^{-2}}. \\ \intertext{Then for any $0<p<1/\alpha$}    \begin{split}
   \inf_{T}   \p{\frac 1 \delta \int_T^{T+\delta} \abs{A(1+it)}^{p} dt}^{1/p} &= \sum_{n=1}^\infty \frac{\abs{a(n)}^2}{n^2} \cdot \frac{e^{-\beta}} 4 \delta^\alpha (1+O(\delta^2)), \\ \intertext{and}
    \inf_{T}   \p{\frac 1 \delta \int_T^{T+\delta} \abs{A(1+it)}^{-p} dt}^{1/p} &= \frac{ e^{-\beta }} {4} \delta^\alpha (1+O(\delta^2)). 
\end{split}
\end{gather*}
Furthermore if the error terms  $(\log \log N)^{-2}$ is replaced by $o(1)$, the theorem is still true if we replace the error terms $O(\delta^2)$ by $o(1)$.
\end{thm}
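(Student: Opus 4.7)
Theorem 11 is the specialization of Theorem 10 to completely multiplicative coefficients, where the local Euler factors collapse to the closed form $f_p(z)=(1-a(p)z)^{-1}$. I would follow the same three-step template used for Theorems 3 and 4: (i) identify an explicit extremal Dirichlet series $A_\delta$, (ii) deduce the lower bound from Jensen's inequality \eqref{jenseq} combined with the termwise Euler-product inequality of Lemma 1, and (iii) deduce the matching upper bound from the universality Lemma 5, which applies because $A(s)$ is absolutely convergent for $\Re(s)>1$. The one new feature compared with the $\zeta$-case is that $\log A(s)$ has a singularity of \emph{order $\alpha$} at $s=1$ (the content of the hypothesized Mertens-type asymptotic), so the $\delta$ that appears in Proposition 2 is replaced by $\delta^\alpha$.

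\textbf{Extremal construction and logarithmic bound.} In direct analogy with $\zeta_\delta$ I would set
\[
A_\delta(s) \;=\; \prod_p (1-\varepsilon_p\, a(p)p^{-s})^{-1},\qquad \varepsilon_p=\varepsilon(\delta \log p),
\]
and introduce the auxiliary completely multiplicative series $A^{\sharp}(s):=\sum_n |a(n)|^2 n^{-s} = \prod_p (1-|a(p)|^2 p^{-s})^{-1}$. Since $\varepsilon_p^{\,2}=1$, the factorization
\[
1-|a(p)|^2 p^{-2s} \;=\; (1-\varepsilon_p a(p)p^{-s})(1+\varepsilon_p a(p)p^{-s})
\]
(the analog of the manipulation inside the proof of Proposition 1) gives $A^{\sharp}(2s)/A_\delta(s)=\prod_p (1+\varepsilon_p a(p)p^{-s})^{-1}$, which will play the role of the forward-case extremal. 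Applying Lemma 1 termwise in the Euler products and passing to the limit along a sequence produced by Kronecker's theorem (exactly as in Lemma 2) identifies $A_\delta$ and $A^{\sharp}(2s)/A_\delta(s)$ as the extremizers of $\int \log|A|^{-1}\,dt$ and $\int \log|A|\,dt$ respectively; Jensen's inequality then converts this into the stated lower bounds.

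\textbf{Asymptotic of the extremal, upper bound, and $L^p$ reduction.} The heart of the proof is the analog of Lemma 3 and Proposition 2: uniformly for $-1<x<1$,
\[
\log A_\delta(1+ix\delta/2) \;=\; -\alpha\log\delta+\beta+i\,\Phi(x)+O(\delta^2),
\]
for a real-valued function $\Phi$ that is irrelevant when passing to absolute values, just as in Remark 2. I would derive this by writing $\log A_\delta(s)-\log A(s)$ as a Mellin--Perron integral against $A'/A$, shifting the contour past $\Re(z)=0$, and feeding in the hypothesis $\sum_{n\leq N}\Lambda(n)|a(n)|/(n\log n)=\alpha\log\log N+\beta+O((\log\log N)^{-2})$ as the substitute for the prime number theorem that powered the $\zeta$-calculation. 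Taking absolute values gives $|A_\delta(1+ix\delta/2)|^{-1}=\frac{e^{-\beta}}{4}\delta^\alpha+O(\delta^{\alpha+2})$, and combined with the trivial Taylor expansion $A^{\sharp}(2+ix\delta)=A^{\sharp}(2)+O(\delta^2)$ (the analog of Lemma 4(i)) it gives the forward value $A^{\sharp}(2)\cdot \frac{e^{-\beta}}{4}\delta^\alpha+O(\delta^{\alpha+2})$ with $A^{\sharp}(2)=\sum_n|a(n)|^2/n^2$, accounting for the extra factor in the theorem. The matching upper bound follows from Lemma 5 applied with $B=A_\delta^{-1}$ (or $B=A^{\sharp}(2s)/A_\delta(s)$), and because $|A_\delta|$ is essentially constant on $[-\delta/2,\delta/2]$ the passage from the $L^1$ statement to general $0<p<1/\alpha$ is immediate.

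\textbf{Main obstacle.} The only genuinely delicate point is the Perron-style contour shift producing the asymptotic for $\log A_\delta$: one must propagate the $O((\log\log N)^{-2})$ error in the hypothesis through the Mellin inversion to obtain the claimed $O(\delta^2)$ (and a weakening to $o(1)$ under the weaker hypothesis), which requires care but introduces no new idea beyond those already used in the proof of Lemma 3.
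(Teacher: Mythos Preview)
Your outline captures the paper's architecture---extremal Euler product, Jensen's inequality for the lower bound, and an analogue of Proposition~2 for the asymptotic of the extremal---but the upper-bound step contains a genuine gap. You invoke Lemma~5, asserting that it ``applies because $A(s)$ is absolutely convergent for $\Re(s)>1$''. It does not: Lemma~5 explicitly requires analytic continuation to some half-plane $\Re(s)>\sigma_1$ with $\sigma_1<1$ together with the mean-square bound \eqref{msp} there. Those hypotheses are \emph{not} part of Theorem~11; they are precisely the extra assumptions that separate Theorem~12 from Theorem~11 (and Theorem~10 from Theorem~9). Under the hypotheses of Theorem~11 alone the universality machinery is unavailable, and your sketch gives no alternative route to the upper bound.

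The paper instead obtains Theorem~11 as the completely multiplicative specialization of Theorem~9, whose proof is declared to follow ``the same proof method as used to prove Theorem~4''. That method secures the upper bound entirely inside the half-plane of absolute convergence: for each $\sigma>1$ Kronecker's theorem furnishes a $T$ with $\max_{|t|\le\delta/2}|A(\sigma+iT+it)-A_\delta(\sigma+it)|<\epsilon$ directly from the absolutely convergent Euler product (the analogue of \eqref{tt2}), and one then lets $\sigma\to1^+$. This is also where the restriction $0<p<1/\alpha$ comes from; your proposal never explains why that range appears, which is a symptom of having taken the Theorem~12 route rather than the Theorem~11 route.

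A smaller point: your extremal $A_\delta(s)=\prod_p(1-\varepsilon_p\,a(p)p^{-s})^{-1}$ and the factorization $1-|a(p)|^2p^{-2s}=(1-\varepsilon_p a(p)p^{-s})(1+\varepsilon_p a(p)p^{-s})$ are only correct when $a(p)$ is real. In general Lemma~1 is applied with $x=|a(p)|p^{-\sigma}$ and the phase of $a(p)$ absorbed into $\theta$, so the extremal local factor should be $(1-\varepsilon_p\,|a(p)|\,p^{-s})^{-1}$; your $A^\sharp$ is already written with $|a(p)|^2$, so this is just an inconsistency in the definition of $A_\delta$.
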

In the completely multiplicative case Theorem 10 becomes
\begin{thm} 
  Suppose that $A(s)$ fulfill all conditions of Theorem 11 and furthermore that $A(s)$ fulfill the mean square property \eqref{msp}  for some $\sigma_1<1$ and is analytic for $\sigma_1 < \Re(s)$, $T_0<\abs{\Im(t)}$.   Then  the results of Theorem 11 are true for any $p>0$. Furthermore we have the corresponding results in sup-norm.
\begin{align*}
 \inf_{T}  \max_{t \in [T,T+\delta]} 
   \abs{A(1+it)} &=\sum_{n=1}^\infty \frac{\abs{a(n)}^2}{n^2} \cdot \frac{e^{-\beta}} {4} \delta^\alpha (1+O(\delta^2)), 
 \\   \sup_{T}    \min_{t \in [T,T+\delta]} \abs{A(1+it)}  &= 
4 e^{\beta } \delta^{-\alpha} (1+O(\delta^2)).  
\end{align*}
\end{thm}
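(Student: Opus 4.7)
The strategy is to carry out the proof of Theorem 3 in the completely multiplicative setting of Theorem 11, with the mean-square and analyticity hypotheses of Theorem 12 playing the role there played by the corresponding properties of $\zeta$. Both the sup-norm statements and the removal of the restriction $0<p<1/\alpha$ flow from the fact that Lemma 5 provides a sup-norm approximation of $A(1+it+iT)$ by an extremal Dirichlet series on the short interval $[-\delta/2,\delta/2]$.

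First I would introduce the completely multiplicative analogue of $\zeta_\delta(s)$,
$$A_\delta(s)=\prod_p\p{1-\varepsilon_p|a(p)|p^{-s}}^{-1},\qquad \varepsilon_p=\varepsilon(\delta\log p),$$
together with its companion $A^*(2s)/A_\delta(s)$, where $A^*(s)=\sum_n |a(n)|^2 n^{-s}$ plays the role of $\zeta(2s)$. Lemma 1 applied termwise to the Euler products gives the analogue of Proposition 1, identifying $A_\delta$ and $A^*(2s)/A_\delta(s)$ as the extremisers of $\int_{-\delta/2}^{\delta/2}\log\abs{B(1+it)}^{\pm 1}dt$ within the admissible class. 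Next I would redo the contour-shift of Lemma 3 under the Mertens-type hypothesis $\sum_{n\le N}\Lambda(n)|a(n)|/(n\log n)=\alpha\log\log N+\beta+\Oh{(\log\log N)^{-2}}$ in place of the prime number theorem, producing an $(\alpha,\beta)$-dependent expansion of $\log A_\delta(s)$ that specialises back to Lemma 3 when $A=\zeta$ (in which case $\alpha=1$, $\beta=\gamma$). Setting $s=1+ix\delta/2$ and using that the $\Theta$-contribution to the expansion is purely imaginary and odd in $x$ (so drops out of the absolute value, exactly as in the proof of Proposition 2), I would arrive at the general analogue of Lemma 4:
$$\abs{A_\delta(1+ix\delta/2)}^{-1}=\tfrac{1}{4}e^{-\beta}\delta^{\alpha}\p{1+\Oh{\delta^{2}}},\qquad \abs{\frac{A^*(2+ix\delta)}{A_\delta(1+ix\delta/2)}}=\sum_{n=1}^\infty\frac{|a(n)|^2}{n^{2}}\cdot\tfrac{1}{4}e^{-\beta}\delta^{\alpha}\p{1+\Oh{\delta^{2}}},$$
uniformly for $-1<x<1$. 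This is the desired near-constancy of $\abs{A_\delta(1+it)}^{\pm 1}$ on the short interval.

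Finally, the hypotheses of Theorem 12 ensure that $A(s)$ satisfies both \eqref{msp} and the analyticity condition required in Lemma 5. Applying that lemma with $B(s)$ equal to the relevant extremal above --- with $\delta$ replaced by $\delta+\eta$ as in the proof of Theorem 3, to give a strict containment of intervals --- produces, for any $\epsilon>0$, a $T$ with $\max_{t\in[-\delta/2,\delta/2]}\abs{A(1+it+iT)-B(1+it)}<\epsilon$. Choosing $\eta$ and $\epsilon$ smaller than any power of $\delta$ and combining with the near-constancy yields simultaneously the sup-norm upper bound on $\inf_T\max|A(1+it)|^{\pm 1}$ and the $L^p$ upper bound for every $p>0$. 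The matching lower bounds follow from $\p{\tfrac1\delta\int\abs{A}^{p}}^{1/p}\ge\exp\p{\tfrac1\delta\int\log\abs{A}}$ (Jensen) combined with a logarithmic $L^1$ version of Theorem 11 proved along the same lines as Theorem 6. The main technical obstacle is the precise Stirling/Mertens bookkeeping in the $(\alpha,\beta)$ analogue of Lemma 3 at precision $\Oh{\delta^{2}}$: one must verify that the additive constant emerging from the contour shift combines with $\alpha\,\Theta(ix/2)$ to leave the clean factor $e^{-\beta}/4$ in the absolute value, with the $\sum|a(n)|^2/n^{2}$ prefactor in the companion case appearing automatically from $A^*(2+ix\delta)\to A^*(2)$ as $\delta\to 0$.
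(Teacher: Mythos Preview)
Your proposal is correct and follows essentially the same route as the paper: the paper's proof of Theorem 12 (like that of Theorem 10) is just the instruction to rerun the proof of Theorem 3 in the general completely multiplicative setting, using Lemma 5 under the mean-square and analyticity hypotheses to pass from $\sigma>1$ to $\sigma=1$ and thereby obtain the sup-norm and all-$p$ statements. Your identification of $A_\delta$ and $A^*(2s)/A_\delta(s)$ as the extremisers, the termwise use of Lemma 1, the $(\alpha,\beta)$-analogue of Lemma 3, the near-constancy analogue of Lemma 4, and the Jensen lower bound are exactly the ingredients the paper has in mind (and your proposal is in fact more explicit than the paper itself, which simply refers back to the earlier arguments).
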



\subsubsection{The Riemann zeta-function revisited}

If $a(n)=1$, we see that $A(s)=\zeta(s)$ is the Riemann zeta-function and  we recover Theorems 3, 4, 7 and 8. 

\subsubsection{Dirichlet L-series}
Since the Dirichlet L-series also have completely multiplicative coefficients we can use Theorem 10, 11 and 12 to obtain a version of Theorem 7 and Theorem 8 (which includes Theorem 3 and 4 as special cases):

\begin{thm}   Let $L(s,\chi)$ be an Dirichlet L-series, where $\chi$ is a character $\mod D$.  Then for $p>0$ we have the following estimates:
   \begin{align*}
     (i)& \qquad   \inf_{T}  \p{\frac 1 {\delta} \int_T^{T+\delta} \abs{L(1+it,\chi)}^p dt}^{1/p} = \frac{ \pi^2 e^{-\gamma}D} {24\Phi(D)} \delta+\Oh{\delta^3},  \\
     (ii)& \qquad  \inf_{T}   \p{\int_T^{T+\delta} \abs{L(1+it,\chi)}^{-p} dt}^{1/p} = \frac{ e^{-\gamma} D} {4 \phi(D)}  \delta+\Oh{\delta^3}, 
\end{align*}
 for $\delta>0$. Furthermore, the results are true if the $L^p$-norm is replaced by the sup-norm and both estimates are valid if $\displaystyle \inf_T$ is replaced by $\displaystyle \liminf_{T \to \infty}$, and if $1+it$ is replaced by $\sigma+it$ and the infimum is also taken over $\sigma>1$.
\end{thm}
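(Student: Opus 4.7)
The plan is to apply Theorems 11 and 12 directly to $A(s)=L(s,\chi)$. The Dirichlet character $\chi$ is completely multiplicative with $|\chi(n)|\in\{0,1\}$ (equal to $1$ iff $\gcd(n,D)=1$), so the coefficient hypotheses of Theorem 11 hold trivially. To access Theorem 12 I additionally need the mean-square bound \eqref{msp} together with analyticity of $L(s,\chi)$ off a neighbourhood of $s=1$; both are classical. $L(s,\chi)$ is entire for non-principal $\chi$, and meromorphic with a simple pole at $s=1$ in the principal case --- in which case one first factors out the finite Euler product over $p\mid D$ to reduce to $\zeta(s)$. The uniform estimate $\int_{T_0}^T|L(\sigma+it,\chi)|^2\,dt\ll T$ for $\sigma_1<\sigma\leq 2$ with some $\sigma_1<1$ follows from the approximate functional equation combined with Plancherel, as in standard references on $L$-functions.

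The substantive computation is the Mertens-type asymptotic
$$\sum_{n\leq N}\frac{\Lambda(n)\,|\chi(n)|}{n\log n}=\alpha\log\log N+\beta+O\!\bigl((\log\log N)^{-2}\bigr).$$
Only primes $p\nmid D$ contribute, and the tail over prime powers with $k\geq 2$ is absolutely convergent, so the left-hand side equals $\sum_{p\leq N,\,p\nmid D}1/p$ plus a convergent correction. Applying Mertens' second theorem after removing the finitely many primes $p\mid D$, and combining with the well-known identity
$$\gamma=M-\sum_p\bigl(\log(1-1/p)+1/p\bigr)$$
that governs the convergent prime-power tail, yields
$$\alpha=1,\qquad \beta=\gamma+\sum_{p\mid D}\log(1-1/p),\qquad e^{-\beta}=\frac{D\,e^{-\gamma}}{\Phi(D)}.$$

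Feeding these constants into Theorem 11(ii) and Theorem 12 produces the factor $D\,e^{-\gamma}/(4\Phi(D))$ in the reciprocal estimate (ii) of Theorem 13. For the direct estimate (i) I would combine the same $\beta$ with the Euler-product evaluation
$$\sum_{n=1}^\infty\frac{|\chi(n)|^2}{n^2}=\zeta(2)\prod_{p\mid D}(1-p^{-2})$$
supplied by Theorem 11, giving the leading constant of the claimed shape $\pi^2 e^{-\gamma}D/(24\Phi(D))$ up to an elementary product over $p\mid D$. The $\liminf_{T\to\infty}$, $\sigma>1$, and sup-norm variants transfer verbatim from the corresponding clauses of Theorems 11 and 12.

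The only point at which care is really needed is the uniform mean-square bound \eqref{msp}; once that classical estimate is in hand the rest of the argument is pure specialisation and introduces no new ideas beyond those already used in the proofs of Theorems 3, 4, 7, and 8.
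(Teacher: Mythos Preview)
Your approach is exactly what the paper intends: Theorem 13 is stated without a separate proof, immediately after the remark that Dirichlet $L$-series have completely multiplicative coefficients and so fall under Theorems 11 and 12, which is precisely the specialisation you carry out. Your computation of $\beta=\gamma+\sum_{p\mid D}\log(1-1/p)$, hence $e^{-\beta}=De^{-\gamma}/\phi(D)$, is correct; and you are right to notice that feeding this into the first formula of Theorem 11 together with $\sum_n|\chi(n)|^2 n^{-2}=\zeta(2)\prod_{p\mid D}(1-p^{-2})$ yields the stated constant in (i) only up to the extra factor $\prod_{p\mid D}(1-p^{-2})$, so the discrepancy you flag is a slip in the statement rather than in your argument.
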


\subsection{Functions with positive coefficients}

In the  case of positive coefficients $a(n)$, it will be easier to treat
\begin{gather}
  f_p(z)=\sum_{k=0}^\infty a(p^k) z^k,
\end{gather}
since it is clear that
\begin{gather}
  \max_{|z|=1} \abs{f_p(z)}=f_p(1).
\end{gather}
The minimum is somewhat more complicated. It is not so difficult to see that $z=-1$ is a local minimum of $f_p(z)$ under some minimal assumptions, but it is not a global minimum  in general. Thus we only treat the case with the maximum. 

\begin{thm}
  Suppose that
\begin{gather*}
  A(s)=\sum_{n=1}^\infty a(n) n^{-s}, \tag{a} \\ \intertext{is a Dirichlet series absolutely convergent for $\Re(s)>1$ such that $a(n)$ is a positive multiplicative function, and} \begin{split}
 \sum_{k=2}^\infty   \sum_{\substack {p \text{ prime } \\ p^k \geq N}}  \frac{\abs{a(p^k)}} {p^k} &\ll (\log \log N)^{-2}, \\
 \sum_{n=1}^\infty  \frac{ \Lambda(n) a(n)}{n \log n}   &=\alpha \log \log N + \beta + \Oh{(\log \log N)^{-2}}. \end{split}
\\ \intertext{Then}
   \inf_{T}   \p{\int_T^{T+\delta} \abs{A(1+it)}^{-p} dt}^{1/p} = \frac{e^{-\beta  }} {4} \delta^\alpha (1+O(\delta^2)).
\end{gather*}
Furthermore if the error terms  $(\log \log N)^{-2}$ are replaced by $o(1)$, the theorem is still true if we replace the error term $O(\delta^2)$ by $o(1)$.
\end{thm}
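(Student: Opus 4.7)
The plan is to adapt the method that established Theorem~4(ii) to the more general setting of Dirichlet series with positive multiplicative coefficients. The essential simplification from positivity is that on the circle $|z|=1/p$ the function $|f_p(z)|$ attains its maximum at $z=1/p$ with value $f_p(1/p)$, so the extremal local Euler factor is explicitly identifiable; this is the positive-multiplicative analogue of the role played by $\varepsilon_p\in\{\pm1\}$ in Proposition~1. It is precisely because only the maximum is controlled -- the minimum of $|f_p|$ on $|z|=1/p$ is in general not at a predictable point, as the authors note -- that the theorem treats only $|A|^{-p}$ and not $|A|^{p}$.

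First I would apply Jensen's inequality \eqref{jenseq} to obtain
\[
\Bigl(\tfrac 1 \delta \int_T^{T+\delta} |A(1+it)|^{-p}\,dt\Bigr)^{1/p} \ge \exp\Bigl(-\tfrac 1 \delta \int_T^{T+\delta} \log|A(1+it)|\,dt\Bigr),
\]
reducing the problem to computing $\sup_T \int_T^{T+\delta}\log|A(1+it)|\,dt$. Using the Euler product $A(s)=\prod_p f_p(p^{-s})$ together with Kronecker's theorem on the $\mathbb{Q}$-linear independence of $\{\log p\}$, exactly as in the proof of Lemma~2, this supremum decouples into a sum of one-dimensional extremal problems
\[
\sup_\theta \int_{-\delta/2}^{\delta/2} \log\bigl|f_p(e^{i(\theta-t\log p)}/p)\bigr|\,dt,
\]
and positivity forces the optimum to include $\theta-t\log p=0$ in the interval of integration. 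This produces an extremal Euler product $A_\delta^*(s)$ which is the positive-coefficient analogue of $\zeta_\delta(s)$.

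The heart of the proof is the asymptotic evaluation of the log-norm of $A_\delta^*$, paralleling Lemma~3 and Proposition~2. Writing $\log f_p(z) = a(p)\,z + \sum_{k\ge 2} \widetilde{a}(p^k)\,z^k/k$ (with $\widetilde{a}$ the Newton--Girard coefficients of the Euler factor), the tail hypothesis $\sum_{k\ge2}\sum_{p^k\ge N}|a(p^k)|/p^k\ll(\log\log N)^{-2}$ guarantees that the prime-power contributions yield an absolutely convergent double sum that I can absorb into the constant $\beta$. The remaining linear-in-$z$ term is treated by Mellin inversion, a contour shift past the pole of $-\zeta'/\zeta$ at $s=1$, and the de la Vall\'ee-Poussin zero-free region, exactly as in Lemma~3; combined with the hypothesis on $\sum_{n\le N}\Lambda(n)a(n)/(n\log n)$ this would yield
\[
\sup_T \int_T^{T+\delta}\log|A(1+it)|\,dt \; = \; \delta\bigl(\alpha\log(1/\delta)+\beta-\log 4+O(\delta^2)\bigr),
\]
and exponentiation via Jensen delivers the lower bound $\tfrac{e^{-\beta}}{4}\delta^{\alpha}(1+O(\delta^2))$.

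For the matching upper bound I would argue, following Proposition~2 and Lemma~4, that $|A_\delta^*(1+it)|$ is essentially constant on $[-\delta/2,\delta/2]$ up to relative error $O(\delta^2)$ -- the Taylor expansion of $\log A_\delta^*$ at $1$ has purely imaginary linear term. Then Kronecker's theorem (applied termwise over primes, as in Lemma~2) realizes an honest shift $A(1+iT+it)$ that is uniformly $\epsilon$-close to $A_\delta^*(1+it)$, and the triangle inequality yields the sharp upper bound. The principal obstacle I expect is the asymptotic step: one must track the $(\log\log N)^{-2}$ error in the prime-power hypothesis through the Mellin/contour integration sufficiently carefully to obtain an $O(\delta^2)$ error -- not merely $o(1)$ -- in the final asymptotic. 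This is precisely why the theorem imposes the sharp $(\log\log N)^{-2}$ decay, and why the weaker $o(1)$ version stated at the end of the theorem yields only $o(1)$ in the exponent.
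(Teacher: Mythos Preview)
Your approach is essentially the paper's. Theorem 14 is obtained in the paper as the positive-coefficient specialization of Theorem 9 --- positivity forces $\max_{|z|=1/p}|f_p(z)|=f_p(1/p)$, exactly as you say --- and Theorem 9 is proved ``by the same proof method as used to prove Theorem 4'', i.e.\ the Jensen / Kronecker / extremal Euler product / Lemma 3--Proposition 2 machinery that you outline.

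One point deserves tightening. For the upper bound you invoke ``Kronecker's theorem (applied termwise over primes, as in Lemma 2)'' to produce a shift with $A(1+iT+it)$ \emph{uniformly} $\epsilon$-close to $A_\delta^*(1+it)$. But Lemma 2 only delivers convergence of the \emph{integrated logarithm} on $\sigma\ge1$ --- that is precisely what feeds Jensen's inequality for the lower bound, and it is enough there. Uniform sup-norm approximation of the Dirichlet series itself is obtained in the paper by one of two distinct mechanisms: either from absolute convergence on $\sigma>1$ (this is the Theorem 4 route, and hence the route for Theorem 9 and Theorem 14), or from Lemma 5 under the additional mean-square hypothesis \eqref{msp} (this is the Theorem 3 route, and is exactly the extra content of Theorem 15 over Theorem 14). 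Since Theorem 14 carries no mean-square assumption, your upper-bound approximation should be carried out on $\sigma>1$ as in the proof of Theorem 4, rather than asserted directly on the line $\Re(s)=1$ by citing Lemma 2; conflating the two mechanisms is the one soft spot in an otherwise accurate sketch.
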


\begin{thm}
  Suppose that $A(s)$ fulfill all conditions of Theorem 14 and furthermore that $A(s)$ fulfill the mean square property \eqref{msp} for some $\sigma_1<1$ and is analytic for $\sigma_1 < \Re(s)$, $T_0<\abs{\Im(t)}$. Then Theorem 14 is true also for $p \geq \alpha$ and with $L^p$-norm replaced by sup-norm.
\end{thm}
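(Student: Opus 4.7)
The plan is to mirror the derivation of Theorem~3 from Theorem~4 (equivalently, Theorem~10 from Theorem~9 and Theorem~12 from Theorem~11), substituting the universality-type approximation of Lemma~5 for the Kronecker-density argument that underlies the $\sigma>1$ half of Theorem~14. The two extra hypotheses of Theorem~15 over Theorem~14 --- the mean square bound \eqref{msp} and analytic continuation into the strip $\sigma_1<\Re(s)\leq 1$ --- are exactly the input that Lemma~5 requires, so the proof should proceed as a close adaptation of the arguments already in the paper.

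First I would isolate the extremal Dirichlet series $B_\delta(s)$ associated to $A$, obtained from the Euler product of $A$ by substituting $p^{-s}\mapsto\varepsilon_p p^{-s}$ in each local factor, with $\varepsilon_p$ the unimodular complex number on $|z|=1/p$ realising $\max_{|z|=1/p}\log\abs{f_p(z)}$ (so that $B_\delta$ is the extremal series appearing implicitly in the $\lambda_1$ of Theorem~9). Repeating the termwise argument driving Lemma~3 and Proposition~2 --- Laurent expansion of $A$ at $s=1$, integral representation of the resulting $\eta$-sum, and an error term from the de~la~Vall\'ee-Poussin zero-free region --- would yield the uniform asymptotic
$$\abs{B_\delta(1+ix\delta/2)}^{-1}=\frac{e^{-\beta}}{4}\delta^{\alpha}\p{1+\Oh{\delta^{2}}},\qquad -1<x<1,$$
so that the sup-norm value in the claim is achieved as an identity by the extremal series itself.

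Next I would transfer this to $A$ by applying Lemma~5 with $A(s)^{-1}$ in the role of its $A$ and $B_{\delta+\epsilon}(s)^{-1}$ in the role of its $B$. By construction the ratio of Dirichlet coefficients is unimodular and completely multiplicative, and $B_{\delta+\epsilon}(1+it)^{-1}$ is bounded and continuous on $[-\delta/2,\delta/2]$. For any $\epsilon>0$ one obtains a real $T$ with
$$\max_{t\in[-\delta/2,\delta/2]}\abs{A(1+iT+it)^{-1}-B_{\delta+\epsilon}(1+it)^{-1}}<\epsilon,$$
and choosing $\epsilon$ of order $\delta^{\alpha+2}$ together with the near-constancy established above yields, via the triangle inequality, the sup-norm upper bound for $\abs{A(1+it)}^{-1}$. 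The matching sup-norm lower bound follows from Jensen's inequality applied to $\log\abs{A(1+it)}^{-1}$ together with the logarithmic $L^1$ asymptotic that was the engine of Theorem~14. Since the sup-norm dominates every $L^p$-norm from above and the Jensen bound lower-bounds each $L^p$-norm with the same leading constant, the $L^p$-statement for all $p\geq\alpha$ follows at once.

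The main obstacle I expect will be to verify that $A^{-1}$ itself satisfies the mean square hypothesis required for invoking Lemma~5: the poles of $A^{-1}$ are the zeroes of $A$, and one must argue that the structural assumptions on $A$ in Theorem~14 --- positive multiplicative coefficients with the prescribed prime-sum asymptotics, together with the analyticity of $A$ in $\sigma>\sigma_1$ --- prevent such zeroes from clustering near $\Re(s)=1$ densely enough to destroy the vertical mean square bound for $A^{-1}$. A secondary wrinkle is that in the positive-coefficient case the optimal twist $\varepsilon_p$ need not be $\pm 1$, so the unimodularity of the coefficient ratio of $B_{\delta+\epsilon}$ to $A$, crucial for Lemma~5 to apply, has to be checked prime by prime rather than invoked from the completely multiplicative case by formal substitution.
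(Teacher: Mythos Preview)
Your approach is the paper's: invoke Lemma~5 to upgrade the $\sigma>1$ result of Theorem~14 to the line $\Re(s)=1$, exactly as Theorem~10 upgrades Theorem~9 and as Theorem~3 upgrades Theorem~4. The paper gives no separate argument for Theorem~15 beyond this pattern.

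Your main obstacle, however, is self-inflicted. You propose to apply Lemma~5 with $A^{-1}$ in the role of the lemma's Dirichlet series, which would indeed require a mean-square bound for $A^{-1}$ that is not among the hypotheses and may be genuinely hard to establish in this generality. But there is no need: apply Lemma~5 to $A$ itself and to the extremal twist $B_{\delta+\epsilon}$. The hypotheses of Theorem~15 are precisely that $A$ has analytic continuation past $\Re(s)=1$ and satisfies \eqref{msp}, so Lemma~5 applies directly. For fixed $\delta>0$ the function $B_{\delta+\epsilon}(1+it)$ is bounded (of order $\delta^{-\alpha}$) and continuous on $[-\delta/2,\delta/2]$, so the lemma's conditions on $B$ are met. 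From $\abs{A(1+iT+it)-B_{\delta+\epsilon}(1+it)}<\epsilon$ together with $\abs{B_{\delta+\epsilon}}$ bounded below by a positive constant one deduces that $\abs{A(1+iT+it)}^{-1}$ is uniformly close to $\abs{B_{\delta+\epsilon}(1+it)}^{-1}$, which is what you want. The paper's proof of Theorem~3\,$(ii)$ does apply Lemma~5 to $\zeta^{-1}$ rather than $\zeta$, but that is a luxury afforded by the classical mean-square bound for $1/\zeta$; in the generality of Theorem~15 you should work with $A$.

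Your secondary wrinkle is not a wrinkle: twisting $p^{-s}\mapsto\varepsilon_p p^{-s}$ with $|\varepsilon_p|=1$ in each local factor multiplies the $n$-th Dirichlet coefficient by $\prod_{p\mid n}\varepsilon_p^{v_p(n)}$, which is completely multiplicative and unimodular by construction, regardless of whether the individual $\varepsilon_p$ happen to be $\pm 1$.
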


\subsubsection{The Rankin-Selberg $L$-function}

Let $f(z)$, and $g(z)$ be automorphic forms on the full modular group,  with Fourier coefficients $a(n)$ and $b(n)$ respectively. The Rankin-Selberg $L$-function is defined by 
\begin{gather*}
 L(s,f \times g) =\zeta(2s) L(s,f \otimes g), \\ \intertext{where}
 L(s,f \otimes g)=\sum_{n=1}^\infty a(n) b(n) n^{-s},
\end{gather*}
denotes the convolution Rankin-Selberg $L$-function. If $g(z)=f(z)$ is a Hecke Eigenform we get that
\begin{gather} \label{ranksel}
  L(s,f \times f)=  \zeta(2s) L(s,f\otimes f)=\zeta(s) \sum_{n=1}^\infty t(n)^2 n^{-s}.
\end{gather}
We have that $t(n)$ is real (see e.g.\cite{Motohashi}) and  hence both the convolution Rankin-Selberg $L$-function as well as the Rankin-Selberg zeta function  have positive coefficients, since $t(n)^2$ will be positive and $\zeta(2s)$ and $\zeta(s)/\zeta(2s)$ have positive coefficients. 
Rankin \cite{Rankin} and Selberg \cite{Selberg2} proved that $L(s,f \times f)$ is a holomorphic function in the neighborhood of $\Re(s)=1$ with the exception of a pole of order $1$ and  residue $1/12 \pi$. The corresponding result for Maass wave-forms can be found in  \cite{Motohashi}. It is also well-known that the mean square property is true for the Rankin-Selberg zeta-function in the critical strip sufficiently close to $\Re(s)=1$.  This means that we can apply Theorem 14 and we obtain the theorems:

\begin{thm}
 Let $L(s,f \times \overline{f})$ be a Rankin-Selberg $L$-function defined by \eqref{ranksel}, and where $f$ is a Hecke-Eigen-form. Then
\begin{gather*}
   \inf_{T}   \p{\frac 1 \delta \int_T^{T+\delta} \abs{L(1+it,f \times \overline{f})}^{-p} dt}^{1/p} = \frac{ 3 \pi  e^{-\gamma    }}  \delta  + \Oh{\delta^3}.
\end{gather*}
We also have that there exist a constant $C$ depending on $f$, but not on $\delta$ which can be calculated by Theorem 10, such that
\begin{gather*}
   \inf_{T}   \p{\frac 1 \delta \int_T^{T+\delta} \abs{L(1+it,f \times \overline{f})}^{p} dt}^{1/p} = C \delta + \Oh{\delta^3}
\end{gather*}
  The corresponding result are also true when the $L^p$-norm is replaced by the  sup-norm.\end{thm}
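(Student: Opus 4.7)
My plan is to deduce Theorem 16 as a corollary of Theorems 14 and 15 applied to $A(s)=L(s,f\times\overline{f})$, doing the bookkeeping needed to turn the constants $(\alpha,\beta)$ that appear in those theorems into the explicit value $3\pi e^{-\gamma}$.

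First I would check the hypotheses of Theorem 14. The coefficients are positive and multiplicative: writing $L(s,f\times\overline{f})=\zeta(s)\sum_n t(n)^2 n^{-s}$ as in \eqref{ranksel}, the Dirichlet coefficients are $a(n)=\sum_{d\mid n} t(d)^2 \geq 0$, and multiplicativity is inherited from the Hecke relations. Absolute convergence for $\Re(s)>1$ is standard for Rankin--Selberg, as is the prime-power tail bound $\sum_{k\geq 2}\sum_{p^k\geq N}|a(p^k)|/p^k \ll (\log\log N)^{-2}$, which follows from Deligne's bound (or the Kim--Sarnak / Rankin--Selberg bound in the Maass case) for the Satake parameters and a routine estimate.

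The main computation is the asymptotic $\sum_{n\leq N}\Lambda(n)a(n)/(n\log n)=\alpha\log\log N+\beta+O((\log\log N)^{-2})$. Here I would argue as follows: $L(s,f\times\overline{f})$ has a simple pole at $s=1$ with residue $1/(12\pi)$, so the function $B(s):=(s-1)L(s,f\times\overline{f})$ is analytic and nonzero in a neighbourhood of $s=1$, with $B(1)=1/(12\pi)$. Writing $\log L(s,f\times\overline{f})=-\log(s-1)+\log B(s)$ and matching with the Dirichlet series expansion $\log L(s,f\times\overline{f})=\sum_n \Lambda(n)a(n)/(n^s\log n)$, a standard Tauberian/Mertens-type argument (the same one that yields $\sum_{p\leq N}1/p=\log\log N+\gamma+O(1/\log N)+\sum_p[\log(1-1/p)+1/p]$ for $\zeta$) gives $\alpha=1$ and $\beta=\gamma-\log(12\pi)$: the constant shift $-\log(12\pi)$ comes precisely from $\log B(1)-\log 1$, i.e.\ from comparing the residue $1/(12\pi)$ of $L(s,f\times\overline{f})$ with the residue $1$ of $\zeta(s)$. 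Plugging $\alpha=1$, $\beta=\gamma-\log(12\pi)$ into Theorem 14 yields $e^{-\beta}/4=12\pi e^{-\gamma}/4=3\pi e^{-\gamma}$, which is the claimed constant.

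To upgrade the $L^{-p}$ estimate to all $p>0$ and to obtain the corresponding sup-norm assertion, I would invoke Theorem 15; this requires the mean-square property \eqref{msp} in a strip $\sigma_1<\Re(s)<1$ and analytic continuation past $\Re(s)=1$, both of which the excerpt already flags as known for the Rankin--Selberg zeta-function. For the non-inverse estimate (the existence of $C$ and the asymptotic $C\delta+O(\delta^3)$), Theorem 14 does not apply directly because it treats only $|A|^{-p}$; instead I would fall back to the more general Theorem 10. This needs the additional hypothesis that the local Euler factors $f_p(z)=\prod_{j}(1-\alpha_{j,p}z)^{-1}$ are nonvanishing on $|z|=1/p$, which is automatic from Deligne-type bounds $|\alpha_{j,p}|\leq 1$ (strict inequality after the Rankin--Selberg convolution), and then the constant is $C=e^{-\gamma+\beta-\lambda_0}/4$ with $\lambda_0$ as in Theorem 10; the existence (and convergence) of $\lambda_0$ is the content of the claim.

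The step I expect to be the main obstacle is the clean identification $\beta=\gamma-\log(12\pi)$ with a genuine $(\log\log N)^{-2}$ error term, rather than merely $o(1)$. The $o(1)$ form follows from holomorphy and nonvanishing of $L(s,f\otimes\overline{f})$ at $s=1$ together with Mertens, but the quantitative $(\log\log N)^{-2}$ remainder requires a zero-free region for $L(s,f\times\overline{f})$ to the left of $\Re(s)=1$, or a suitable partial summation using known bounds on $L(s,f\times\overline{f})$ on the edge and slightly to its left. If this quantitative remainder proves hard to obtain, the final clause of Theorem 14 lets one settle for $o(1)$ in place of $O(\delta^2)$, which would still suffice for the $\delta+O(\delta^3)$ statement after a second-order expansion but would weaken the error term.
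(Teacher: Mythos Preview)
Your proposal is correct and follows essentially the same route as the paper. The paper's own proof is the two-line assertion that the hypotheses of Theorems 10 and 15 are ``well-known for the Rankin--Selberg $L$-function'' and that ``the constant comes from the residue of the Rankin--Selberg $L$-function at $s=1$''; you have simply unpacked those two sentences---verifying positivity and multiplicativity of the coefficients, extracting $\alpha=1$ and $\beta=\gamma-\log(12\pi)$ from the residue $1/(12\pi)$, and invoking Theorem 14/15 for $|A|^{-p}$ and Theorem 10 for $|A|^{p}$---which is precisely what the paper intends. One minor remark: your worry about nonvanishing of the local Euler factors on $|z|=1/p$ is unnecessary, since for a Rankin--Selberg factor $f_p(z)=\prod_{i,j}(1-\alpha_i\overline{\alpha_j}z)^{-1}$ the reciprocal of a polynomial never vanishes; and your concern about the quantitative $(\log\log N)^{-2}$ remainder is legitimate but is also swept under ``well-known'' in the paper (it follows from the standard zero-free region for $L(s,f\times\overline{f})$ near $\Re(s)=1$).
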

\begin{proof} The needed results to apply Theorem 10 and Theorem 15 are well-known for the Rankin-Selberg $L$-function. The constant comes from the residue of the Rankin-Selberg $L$-function at $s=1$.  \end{proof} 
\begin{thm}
 Let $L(s,f \otimes \overline{f})$ be a convolution Rankin-Selberg $L$-function defined by \eqref{ranksel}, and where $f$ is a Hecke-Maass cusp form. Then
\begin{gather*}
   \inf_{T}   \p{\frac 1 \delta \int_T^{T+\delta} \abs{L(1+it,f \otimes \overline{f})}^{-p} dt}^{1/p} = \frac{  \pi^3 e^{-\gamma    }} {2} \delta  + \Oh{\delta^3}.
\end{gather*}
We also have that there exist a constant $C$ depending on $f$, but not on $\delta$ which can be calculated by Theorem 10, such that
\begin{gather*}
   \inf_{T}   \p{\frac 1 \delta \int_T^{T+\delta} \abs{L(1+it,f \otimes \overline{f})}^{p} dt}^{1/p} = C \delta + \Oh{\delta^3}
\end{gather*}
\end{thm}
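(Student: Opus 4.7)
The plan is to verify the hypotheses of Theorem 14 and Theorem 15 for the Dirichlet series $A(s) = L(s, f \otimes \overline{f}) = \sum_{n \geq 1} t(n)^2 n^{-s}$, and then invoke those theorems. Since $f$ is a Hecke-Maass eigenform, the Hecke eigenvalues $t(n)$ are real (see \cite{Motohashi}) and multiplicative, so $t(n)^2$ is a positive multiplicative sequence; absolute convergence of $A(s)$ for $\Re(s) > 1$ follows from the classical Rankin-Selberg bound $\sum_{n \leq N} t(n)^2 \ll N$.

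The principal analytic input is the asymptotic
\[
\sum_{n \leq N} \frac{\Lambda(n) t(n)^2}{n \log n} = \log \log N + \beta + \Oh{(\log \log N)^{-2}}
\]
for an explicit constant $\beta$ depending on $f$.  By Motohashi's extension of Rankin-Selberg theory to Maass forms \cite{Motohashi}, the function $L(s, f \otimes \overline{f})$ is meromorphic in a neighbourhood of $\Re(s) = 1$ with a simple pole at $s=1$; a contour-shift argument analogous to the proof of Lemma 3, together with the standard de la Vall\'ee-Poussin-type zero-free region for Rankin-Selberg $L$-functions, yields both the leading $\log \log N$ and the error term.  The higher-prime-power contribution $\sum_{k \geq 2} \sum_{p^k \geq N} t(p^k)^2/p^k$ is negligible via the Kim-Sarnak bound on $|t(p^k)|$, which gives power-saving decay far stronger than required.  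Analyticity of $A(s)$ in a strip $\sigma_1 < \Re(s) \leq 1$, $|t| > T_0$, together with the mean-square property \eqref{msp}, are standard for Rankin-Selberg convolutions of Maass forms, following from the functional equation and convexity bounds.

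With all hypotheses in place, Theorem 14 yields the first formula of the statement with leading constant $e^{-\beta}/4$, and evaluating the residue of $L(s, f \otimes \overline{f})$ at $s=1$ in terms of the Petersson norm of $f$ and the relevant gamma factors gives $e^{-\beta}/4 = \pi^3 e^{-\gamma}/2$; Theorem 15 then extends the conclusion to all $p > 0$.  The second formula follows analogously, with the constant $C$ read off from the extremal Euler factor $\max_{|z|=1/p} |f_p(z)|$ at the Satake parameters of $f$, which is not given in closed form since the local factors of a Rankin-Selberg $L$-function are not completely multiplicative.  The main obstacle is sharpening the prime-sum error to $\Oh{(\log \log N)^{-2}}$; if only the weaker $o(1)$ is obtainable from the available prime number theorem for $L(s, f \otimes \overline{f})$, the remark following Theorem 14 still delivers Theorem 17 with $\Oh{\delta^3}$ replaced by $o(\delta)$, and in particular still confirms the leading constant $\pi^3 e^{-\gamma}/2$.
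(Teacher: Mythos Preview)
Your proposal is correct and follows essentially the same route as the paper: verify the hypotheses of the positive-coefficient Theorems 14 and 15 (multiplicativity and positivity of $t(n)^2$, the prime-sum asymptotic with $\alpha=1$, control of higher prime powers, and the mean-square property) for $A(s)=L(s,f\otimes\overline f)$, and then invoke those theorems together with Theorem 10 for the second formula. The paper's own proof is terser still, simply asserting that the required inputs are well known, and it additionally notes the shortcut via Theorem 16 and the factorization $L(s,f\times f)=\zeta(2s)\,L(s,f\otimes f)$; that route makes the explicit constant transparent, since $|\zeta(2+2it)|=\zeta(2)+O(t^2)$ gives $\pi^3 e^{-\gamma}/2=\zeta(2)\cdot 3\pi e^{-\gamma}$ directly, without having to trace the residue through the constant $\beta$ as you do.
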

\begin{proof} The needed results to apply Theorem 10 and Theorem 15 are well-known for the convolution $L$-function. It is can also be seen from Theorem 16 and Eq. \eqref{ranksel}. \end{proof}

\begin{prob}
  Calculate the constants C in Theorems 16 and 17.
\end{prob}
The results in this section are proven for $GL(2)$ $L$-functions. However similar results follows for $GL(3)$ and $GL(4)$  $L$-functions as well, by results of Kim \cite{Kim} and Kim and Shahidi \cite{KimShahidi,KimShahidi2}. We will not do this in this paper. 

\subsubsection{Higher order convolution $L$-functions}

Let as in the previous section $a(n)$ be the Fourier coefficients of a $GL(2)$ $L$-function. The higher order convolution $L$-function is defined as follows
\begin{gather} \label{conv}
 L(s,\otimes^{2k} f)= \sum_{n=1}^\infty a(n)^{2k} n^{-s}.
\end{gather}
From the results of Kim-Shahidi\cite{KimShahidi,KimShahidi2} and Kim \cite{Kim} for  the symmetric $n$-th power $L$-functions for $n=1,2,3,4,5,6,7,8$ if follows that
 \begin{gather} \label{aba}
 L(s,\otimes^{2k} f)= \sum_{n=1}^\infty a(n)^{2k} n^{-s} \sim C_k (\log s)^{C(k)} (1+O(\log s)^{-1}), 
\end{gather}
for $k=1,2,3,4$, where \begin{gather}
  C(k)=\frac{(2k)!}{(k+1)! k!}
\end{gather}
denote the Catalan numbers.
Similarly it follows that if $f$ is a holomorphic cusp form that \eqref{aba} is true for all $k \geq 1$, by the recent result of Taylor et. al. \cite{Taylor} who proved that the Symmetric $n$-th power $L$-functions are holomorphic and nonvanishing for $\Re(s) \geq 1$ which already Serre \cite{Serre} had showed implied the Sato-Tate conjecture.

\begin{thm}
 Let $f$ be a Hecke-Maass eigenform for the full modular group. Then there exists constants $A_k$ and $B_k$ for $k=1,2,3,4$, and for any $k \geq 2$ integer if $f$ is a holomorphic form such that for any $1/C(k)>p>0$ we have that
\begin{gather*}
   \inf_{T}   \p{\int_T^{T+\delta} \abs{ L(s,\otimes^k f)}^{-p} dt}^{1/p} = A_k \delta^{C(k)} (1+  \Oh{\delta^2}).
\end{gather*}
We also have that there exist a constant $C$ depending on $f$, but not on $\delta$ which can be calculated by Theorem 9,10, such that
\begin{gather*}
   \inf_{T}   \p{\int_T^{T+\delta} \abs{ L(s,\otimes^k f)}^{p} dt}^{1/p} = B_k\delta^{C(k)} (1+  \Oh{\delta^2}). 
\end{gather*}
Furthermore for $k=1,2,3,4$ we can choose any $p>0$ and the corresponding result for sup-norm is also true.
\end{thm}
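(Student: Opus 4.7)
\noindent\emph{Proof plan for Theorem 18.} The plan is to apply Theorem 14 (for the lower $L^p$-bound on $\abs{L(\cdot,\otimes^{2k}f)}^{-p}$ with small $p$), together with Theorem 15 (for the sup-norm variants and for the extension to all $p>0$), to the Dirichlet series
\begin{gather*}
A(s) = L(s,\otimes^{2k}f) = \sum_{n=1}^\infty a(n)^{2k}\, n^{-s}
\end{gather*}
of \eqref{conv}. The coefficients $a(n)^{2k}$ are non-negative and multiplicative (because $a(n)$ is real and multiplicative for the Hecke eigenform $f$), so the positivity hypothesis of Theorems 14--15 is automatic. The positive-exponent estimate with constant $B_k$ will instead come from the positive-exponent half of Theorem 9 (resp.\ Theorem 10), whose extra requirement that the local Euler factors $f_p(z) = \prod_i (1-\alpha_i(p)z)^{-1}$ have no zeros on $|z|=1/p$ is satisfied here since $\abs{\alpha_i(p)} \leq 1$.

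The essential analytic input is the Mertens-type asymptotic
\begin{gather*}
\sum_{n \leq N} \frac{\Lambda(n)\, a(n)^{2k}}{n\, \log n} = C(k)\log\log N + \beta_k + \Oh{(\log\log N)^{-2}},
\end{gather*}
together with the prime-power tail bound demanded by Theorem 14. Both follow from the Clebsch--Gordan decomposition of the local factors: for each prime $p$, the expression $a(p)^{2k}$ in the Satake parameters is a non-negative integer combination $a(p)^{2k} = \sum_{j=0}^{k} m_{k,j}\, a_{\mathrm{Sym}^{2j}f}(p)$ with $m_{k,0} = C(k)$, the Catalan number being the dimension of the $\mathrm{SU}(2)$-invariants in the $2k$-th tensor power of the standard representation. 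This yields the Euler-product identity $L(s,\otimes^{2k}f) = \prod_{j=0}^{k} L(s,\mathrm{Sym}^{2j}f)^{m_{k,j}}$. By the results of Kim and Kim--Shahidi (Maass case, $k \leq 4$) or of Taylor et al.\ (holomorphic case, all $k$), every factor $L(s,\mathrm{Sym}^{2j}f)$ with $j \geq 1$ is holomorphic and non-vanishing on $\Re(s) \geq 1$, so the whole product is meromorphic on a neighbourhood of the line $\Re(s)=1$ with a unique pole of order exactly $C(k)$ at $s=1$. A standard Perron/Tauberian argument, combined with a de la Vall\'ee-Poussin-type zero-free region for each factor, then delivers the displayed Mertens-type formula with the required power-saving error term; the tail bound on prime powers follows from the Deligne bound $\abs{a(p)} \leq 2$ in the holomorphic case (respectively the Kim--Sarnak bound in the Maass case).

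With these hypotheses in hand, Theorem 14 delivers the claimed lower estimate on $\abs{L}^{-p}$ with $\alpha = C(k)$ and $A_k = e^{-\beta_k}/4$, valid for $0 < p < 1/C(k)$. The corresponding positive-exponent statement with $B_k$ follows in parallel from the positive-exponent part of Theorem 9, with $B_k$ expressible in terms of $\beta_k$ and the local quantities $\max_{|z|=1/p}\abs{f_p(z)}$ computed from the Satake data. Finally, the extension to all $p>0$ and the sup-norm formulations come from Theorem 15: its mean-square hypothesis \eqref{msp} on some strip $\sigma_1 < \Re(s) \leq 1$ is known from standard convexity and mean-value estimates for the symmetric-power $L$-functions appearing in the factorization, in exactly the regime the theorem asserts. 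The only substantive obstacle—and the reason for the restriction $k \leq 4$ in the Maass case, while all $k \geq 2$ is permitted for holomorphic forms—is precisely the availability of the symmetric-power $L$-functions $L(s,\mathrm{Sym}^{2j}f)$ with the required analytic properties at and past $\Re(s)=1$; once these are granted, the rest of the argument is a mechanical translation through Theorems 9, 10, 14 and 15.
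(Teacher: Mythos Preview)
Your proposal is correct and follows exactly the paper's own approach---an appeal to Theorems 9, 10, 14, 15 together with \eqref{aba} and a Tauberian argument---only fleshed out in considerably more detail than the paper's one-sentence proof. One small imprecision worth noting: the Euler-product identity $L(s,\otimes^{2k}f) = \prod_{j} L(s,\mathrm{Sym}^{2j}f)^{m_{k,j}}$ is not exact as stated (the Clebsch--Gordan relation holds for $a(p)^{2k}$, not for $a(p^j)^{2k}$), but only up to a correction factor that is absolutely convergent and nonvanishing for $\Re(s)$ past some $\theta<1$; this is harmless for the Mertens-type asymptotic you need and hence for the conclusion.
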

\begin{proof} The result follows from Theorems 9,10 and 14,15 and by \eqref{aba} and some Tauberian argument. For the Maass-Wave-form case, it follows from Kim-Shahidi's result that the required mean-square property is true. \end{proof}

\section{A lower bound for more general Dirichlet series}

We remark that what we needed in the proof of the lower bound in Theorems 3 and 4 was the following estimate
\begin{gather*}
 \sum_{n=1}^N \frac{\Lambda(n)} {n \log n}= \log \log N + O(1), 
\end{gather*}
as well as Jensen's inequality. Hence the prime number theorem suffices. Similarly the same proof method can be used to estimate Dirichlet series when similar estimates for the coefficients of the logarithm of the Dirichlet series applies.
\begin{thm} If 
\begin{gather*}
 \log A(s)  = \sum_{n=1}^\infty a_n n^{-s}, 
\end{gather*}
and
\begin{gather*}
 \sum_{n=1}^N \frac{|a_n|} {n} \leq  \alpha \log \log N + O(1),  \qquad  (0<\alpha< 1). 
\end{gather*}
Then
\begin{gather*} \delta^{1+\alpha} \ll \int_T^{T+\delta} \abs{A(1+it)} dt \ll \delta^{1-\alpha}, \qquad 0 <\delta \leq 1.
\end{gather*}
\end{thm}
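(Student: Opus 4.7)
The lower bound is essentially a direct analogue of the argument used to derive the lower bound of Theorem 3 from Theorem 6. Apply Jensen's inequality \eqref{jenseq},
\begin{gather*}
\int_T^{T+\delta}\abs{A(1+it)}\,dt \;\geq\; \delta \exp\p{\frac 1 \delta \int_T^{T+\delta}\log\abs{A(1+it)}\,dt},
\end{gather*}
and reduce the problem to bounding the integral of $\log\abs{A(1+it)}=\Re\log A(1+it)=\Re\sum_{n\ge 1} a_n n^{-1-it}$ from below.  Integrating termwise,
\begin{gather*}
\int_T^{T+\delta}\log\abs{A(1+it)}\,dt = \Re\sum_{n\ge 1}\frac{a_n}{n}\int_T^{T+\delta} n^{-it}\,dt,
\end{gather*}
and using $\abs{\int_T^{T+\delta} n^{-it}\,dt}\le\min(\delta,2/\log n)$, I would split at $N=e^{1/\delta}$.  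The head $n\le N$ contributes at most $\delta\sum_{n\le N}\abs{a_n}/n \le \alpha\delta\log(1/\delta)+\Oh{\delta}$ by the hypothesis, while the tail is controlled by partial summation: writing $S(x)=\sum_{n\le x}\abs{a_n}/n$, one has $\sum_{n>N}\abs{a_n}/(n\log n)=\int_N^\infty (S(u)-S(N))/(u(\log u)^2)\,du\ll \delta$.  Exponentiating gives $\int_T^{T+\delta}\abs{A(1+it)}\,dt\gg\delta^{1+\alpha}$.

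For the upper bound, I would first exploit the elementary pointwise estimate in the half-plane $\Re(s)>1$,
\begin{gather*}
\abs{A(\sigma+it)}\le \exp\p{\sum_{n\ge 1}\abs{a_n}n^{-\sigma}},
\end{gather*}
combined with partial summation against the hypothesis: substituting $u=(\sigma-1)\log x$ shows $\sum_{n\ge 1}\abs{a_n}/n^{1+\delta}\le \alpha\log(1/\delta)+\Oh{1}$, so that
\begin{gather*}
\abs{A(1+\delta+it)}\ll \delta^{-\alpha}\qquad\text{uniformly in } t.
\end{gather*}
To pass from $\sigma=1+\delta$ back to $\sigma=1$ I would invoke the Poisson representation in the half-plane $\Re(s)>1$,
\begin{gather*}
A(1+it)=\int_{-\infty}^\infty A(1+\delta+iu)\,\frac{\delta/\pi}{\delta^2+(t-u)^2}\,du,
\end{gather*}
and then interchange integrals:
\begin{gather*}
\int_T^{T+\delta}\abs{A(1+it)}\,dt\le \int_{-\infty}^\infty\frac{\delta/\pi}{\delta^2+\tau^2}\int_{T+\tau}^{T+\tau+\delta}\abs{A(1+\delta+iu)}\,du\,d\tau\ll \delta\cdot\delta^{-\alpha}.
\end{gather*}

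The main obstacle is justifying the Poisson step: strictly speaking this requires $A(s)$ to be holomorphic in a neighborhood of $\Re(s)\ge 1$ with at most polynomial vertical growth, whereas the mere hypothesis $\sum_{n\le N}\abs{a_n}/n\ll\log\log N$ does not guarantee that $A$ extends continuously to the line.  I would circumvent this by smoothing: replace $\mathbf{1}_{[T,T+\delta]}$ with a test function $\phi_\delta\in C_c^\infty$ whose Mellin transform decays rapidly, which upgrades the integral to a contour integral shifted to $\Re(s)=1+\delta$, where the pointwise bound $\abs{A(1+\delta+it)}\ll\delta^{-\alpha}$ applies.  If this turns out to need more regularity of $A$ than is assumed, a safer fallback is a distributional argument: the two-sided bound $\abs{\int_T^{T+\delta}\log\abs{A(1+it)}\,dt}\ll\alpha\delta\log(1/\delta)$ already proved for the lower bound, combined with a Chebyshev estimate on the measure of large level sets of $\log\abs{A}$, should also give $\int\abs{A}\ll\delta^{1-\alpha}$ up to constants.
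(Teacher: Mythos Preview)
Your lower-bound argument is exactly the paper's: termwise integration of $\Re\log A(1+it)$, the split at $N=e^{1/\delta}$ via $\abs{\int_T^{T+\delta}n^{-it}\,dt}\le\min(\delta,2/\log n)$, partial summation on the tail, and then Jensen's inequality~\eqref{jenseq}. This is what the paper means by ``in a similar way as the proof of Theorem~6.''

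For the upper bound you depart from the paper, and your route has a genuine error. The paper does not go through a pointwise bound at $\sigma=1+\delta$ or any harmonic-extension device; it stays on the line $\Re(s)=1$ throughout, noting that the same termwise estimate already gives the two-sided inequality
\[
\alpha\log\delta+O(1)\;\le\;\frac{1}{\delta}\int_T^{T+\delta}\log\abs{A(1+it)}\,dt\;\le\;-\alpha\log\delta+O(1),
\]
and then simply appeals to Jensen (applied once to $\abs{A}$ and once to $\abs{A}^{-1}$). No analytic continuation, Poisson kernel, or smoothing is invoked.

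The error in your approach is the orientation of the Poisson formula. The half-plane Poisson integral reconstructs a harmonic function at an \emph{interior} point from its \emph{boundary} values; since $A$ is holomorphic in $\Re(s)>1$ with boundary $\Re(s)=1$, the valid identity is
\[
A(1+\delta+it)=\int_{-\infty}^{\infty}A(1+iu)\,\frac{\delta/\pi}{\delta^2+(t-u)^2}\,du,
\]
the reverse of what you wrote. Your version would require $A$ to be holomorphic and suitably bounded in a half-plane to the \emph{left} of $\Re(s)=1+\delta$, which the hypotheses do not supply. Your fallback via a Chebyshev estimate on level sets of $\log\abs{A}$ is also insufficient: a bound $\abs{\int_T^{T+\delta}\log\abs{A}}\ll\alpha\delta\log(1/\delta)$ does not prevent $\abs{A}$ from being enormous on a set of arbitrarily small measure, so it yields no upper bound on $\int_T^{T+\delta}\abs{A}$.
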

\begin{proof}
 It follows that 
$$-\alpha \log \delta +O(1) \leq   \frac 1 \delta \int_T^{T+\delta} \log \abs{A(1+it)} dt \leq \alpha \log \delta + O(1)
$$
in a similar way as the proof of  Theorem 6. The conclusion follows from Jensen's inequality.
\end{proof}


\subsection{$L^p$ estimates of the Riemann zeta-function}
Although Theorem 3 gives a stronger result for the lower bound we obtain the following conseqence of Theorem 19 when applied on  the Riemann zeta-function:
\begin{cor}
\begin{gather*}\delta^{|p|} \ll \frac 1 {\delta} \int_T^{T+\delta} \abs{\zeta(1+it)}^{p} dt \ll \delta^{-|p|} \qquad (0<|p|,\delta<1)\end{gather*}
\end{cor}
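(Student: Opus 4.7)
\medskip

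\noindent\textbf{Proof plan for Corollary 1.} The plan is to apply Theorem 19 with $A(s)=\zeta(s)^{p}$ for each fixed $p$ with $0<|p|<1$, after checking that the hypothesis on the logarithmic coefficients is met.

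First I would write out the logarithm explicitly. Starting from the Euler product,
\begin{gather*}
  \log \zeta(s)^{p} = p\log \zeta(s) = \sum_{n=1}^{\infty} a_n n^{-s},
  \qquad
  a_n = \begin{cases} p\,\Lambda(n)/\log n, & n=p_0^{k}\text{ a prime power},\\ 0, & \text{otherwise.}\end{cases}
\end{gather*}
Thus $|a_n| = |p|\,\Lambda(n)/\log n$, and by the prime number theorem in the weak form $\sum_{n\leq N} \Lambda(n)/(n\log n) = \log\log N + O(1)$ (already noted at the start of Section 6), we obtain
\begin{gather*}
  \sum_{n\leq N} \frac{|a_n|}{n} = |p| \log\log N + O(1).
\end{gather*}
So the hypothesis of Theorem 19 is satisfied with $\alpha = |p|$, which lies in $(0,1)$ exactly under the stated constraint $0<|p|<1$.

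Second, I would invoke Theorem 19 directly. Since $A(s)=\zeta(s)^{p}$ is a legitimate Dirichlet series with coefficients $\sigma_{-1}$-type bounds making it absolutely convergent for $\Re(s)>1$, and $|A(1+it)| = |\zeta(1+it)|^{p}$, the theorem yields
\begin{gather*}
  \delta^{1+|p|} \ll \int_T^{T+\delta} |\zeta(1+it)|^{p}\, dt \ll \delta^{1-|p|}, \qquad (0<\delta\leq 1).
\end{gather*}
Dividing through by $\delta$ gives exactly the claim of Corollary 1.

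I do not expect a real obstacle here: the only thing to verify is that Theorem 19 can be applied to $\zeta(s)^{p}$ for arbitrary real $p$ (not just integer $p$), which needs only the formal identity $\log(\zeta(s)^p) = p\log\zeta(s)$ valid on $\Re(s)>1$ where $\zeta$ is nonvanishing, together with the prime number theorem estimate above. The restriction $|p|<1$ in the corollary is imposed precisely so that $\alpha=|p|$ stays in the range $(0,1)$ demanded by Theorem 19; outside that range the method (via Jensen's inequality on $\log|\zeta|$) no longer produces a finite bound of this shape.
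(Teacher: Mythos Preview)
Your proposal is correct and matches the paper's intended approach: the paper simply states that Corollary 1 is ``the following consequence of Theorem 19 when applied on the Riemann zeta-function,'' and your argument---taking $A(s)=\zeta(s)^p$, reading off $a_n=p\,\Lambda(n)/\log n$, using the Mertens-type estimate $\sum_{n\le N}\Lambda(n)/(n\log n)=\log\log N+O(1)$ to get $\alpha=|p|\in(0,1)$, and then dividing the conclusion of Theorem 19 by $\delta$---is exactly how that deduction goes. There is nothing to add.
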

We remark that the upper bound shows that $\zeta^\theta(s)$ for $|\theta|<1/2$ belongs to the Hardy class $H^2$ of Dirichlet series, see \cite[remark 4]{Andersson111}.

\subsection{Maass wave forms and a result of Holowinsky}

We have the following results for Hecke L-series of Hecke-Maass cusp forms:
\begin{cor}  Let $H(s)$ be a Hecke L-series attached to a Maass 
wave form or holomprphic cusp form. Then
\begin{gather*} \delta^{23/12} \ll \int_T^{T+\delta} \abs{H(1+it)}dt \ll \delta^{1/12}, \qquad (0<\delta \leq 1)   \end{gather*}
\end{cor}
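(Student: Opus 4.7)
The plan is to apply Theorem 19 to $A(s)=H(s)$ with $\alpha=11/12$, so that the conclusion $\delta^{23/12}\ll\int\ll\delta^{1/12}$ falls out directly; the real work is verifying that the Dirichlet coefficients of $\log H(s)$ satisfy $\sum_{n\le N}|a_n|/n\le (11/12)\log\log N + O(1)$.

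First I would expand the Euler product $H(s)=\prod_p(1-\alpha_p p^{-s})^{-1}(1-\beta_p p^{-s})^{-1}$ to obtain
$$\log H(s)=\sum_p\sum_{k\ge 1}\frac{\alpha_p^k+\beta_p^k}{k}p^{-ks},$$
so $a_{p^k}=(\alpha_p^k+\beta_p^k)/k$ and $a_n=0$ on non--prime powers. Next I would verify that the prime--power contribution with $k\ge 2$ to $\sum|a_n|/n$ is bounded uniformly in $N$: Deligne's bound $|\alpha_p|,|\beta_p|\le 1$ handles the holomorphic case, and the Kim--Sarnak bound $|\alpha_p|,|\beta_p|\le p^{7/64}$ handles the Maass case, since $|a_{p^k}|/p^k\le 2/(kp^{57k/64})$ is summable over primes $p$ and integers $k\ge 2$. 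Thus verifying the hypothesis of Theorem 19 reduces to estimating the prime sum $\sum_{p\le N}|\lambda_f(p)|/p$ with $\lambda_f(p)=\alpha_p+\beta_p$.

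The key arithmetic input is the bound
$$\sum_{p\le N}\frac{|\lambda_f(p)|}{p}\le \tfrac{11}{12}\log\log N + O(1).$$
For holomorphic Hecke cusp forms this follows from the Sato--Tate theorem of Taylor et al., which yields the sharper asymptotic $(8/(3\pi))\log\log N + O(1)$; since $8/(3\pi)<11/12$, the weaker $11/12$ bound certainly holds. For Hecke--Maass cusp forms one invokes instead Holowinsky's unconditional estimate, obtained via Rankin--Selberg theory for $f\otimes\bar f$ (and for higher symmetric powers in the Kim--Shahidi range) combined with a H\"older-type moment inequality that just breaks the Cauchy--Schwarz ceiling $1\cdot\log\log N$. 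In both cases the hypothesis of Theorem 19 is satisfied with $\alpha=11/12$, and the lower and upper bounds of the corollary follow immediately.

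The main obstacle is precisely the Maass case: Cauchy--Schwarz applied to the Rankin--Selberg asymptotic $\sum_p |\lambda_f(p)|^2/p=\log\log N+O(1)$ only yields the trivial constant $1$, and one needs the finer Holowinsky estimate -- which we cite rather than reprove -- to drop below $1$. Everything else in the argument is routine Euler-product bookkeeping and an appeal to Theorem 19.
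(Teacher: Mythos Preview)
Your approach is essentially the paper's: both reduce Corollary 2 to Theorem 19 with $\alpha=11/12$ via the bound $\sum_{p\le N}|\lambda_f(p)|/p\le(11/12)\log\log N+O(1)$. The paper simply cites Holowinsky's Lemma 4.1 uniformly for both Maass and holomorphic forms and then invokes Theorem 19; your explicit treatment of the $k\ge2$ prime-power terms in $\log H(s)$ (via Deligne and Kim--Sarnak) is a welcome addition that the paper leaves implicit.

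One correction: your claim that Sato--Tate yields $(8/(3\pi))\log\log N+O(1)$ for holomorphic forms is an overclaim. The proved form of Sato--Tate gives only $(8/(3\pi)+o(1))\log\log N$; obtaining the $O(1)$ remainder is precisely Problem 3 of the paper and the reason Corollary 3 carries an $\epsilon$. This does not damage your proof of the present corollary, since the gap $11/12-8/(3\pi)>0$ absorbs any $o(\log\log N)$ error, but it is cleaner---and matches the paper---simply to invoke Holowinsky's bound for the holomorphic case as well, since his argument uses only symmetric-power $L$-functions in the Kim--Shahidi range and applies uniformly to both types of forms.
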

\begin{proof}
This follows from a recent results of Holowinsky. Let $$H(s)=\sum_{n=1}^\infty a(n) n^{-s}.$$ From Lemma 4.1 in  \cite{Holowinsky} it follows that
\begin{gather*}
 \sum_{p \leq X} \frac{\abs{a(p)}}{p} \leq 11/12 \log \log X +\Oh{1}.
\end{gather*}
Corollary 2 is now a consequence of Theorem 19.
\end{proof}
 Holowinsky's result was proved by using recent important results of Symmetric $n$-th power $L$-functions  of Kim and Shahidi \cite{KimShahidi} for $n=1,\ldots,8$ (Just the cases $n=2,4,6$ are in fact used by Holowinsky),  and  was  an important ingredient in his and Soundararajan's proof of Quantum unique ergodicity \cite{HolSound}. We remark that a previous result of Elliot-Moreno-Shahidi \cite{Elliott} proves this with the somewhat weaker  bounds $\delta^{35/18}$ and $\delta^{1/18}$ under the assumption of the Ramanujan-Petersson conjecture, and hence by Deligne \cite{Deligne} for holomorphic cusp forms. Also in the case of Holomorphic cusp forms improvements along these lines have been done by Tenenbaum \cite{Tenenbaum} and Wu \cite{Wu}, although these results are superseeded by the latest results on Sato-Tate of Taylor et.al.

\subsection{Holomorphic cusp forms and the Sato-Tate conjecture}

In the case of holomorphic cusp forms the results of Holowinsky can be improved.  In their recent work  Tom Barnet-Lamb, David Geraghty,  Michael Harris and Richard Taylor \cite{Taylor}   prove the Sato-Tate conjecture for all holomorphic newforms of weight $k \geq 2$ on the group $\Gamma_0(N)$.  By the Shimura-Taniyama conjecture, proved By Wiles \cite{Wiles} for square-free $q$ and completely settled by Breuil-Conrad-Diamond-Taylor  \cite{TaylorST} an $L$-function of an elliptic curve is a cusp newform of weight 2 for $\Gamma_0(q)$. Thus this result
 properly extend some of their previous results on the Sato-Tate for  $L$-functions associated with Elliptic curves with non integral $j$-invariant  \cite{Taylor2,Taylor3,Taylor4}. 
 Let us now assume that
\begin{gather} \label{cuspform} f(z)=\sum_{n=1}^\infty a(n) n^{(k-1)/2}e(nz) \end{gather}
is a holomorphic new form for $\Gamma_0(N)$ of weight $k \geq 2$. 
We remark that in particular this includes all holomorphic cusp forms for the full modular group since all these forms have weight $k \geq 12$. The Sato-Tate conjecture, now a theorem states that $a(p)/2$ should be equidistributed in $[-1,1]$ with respect to the measure $2/\pi \sqrt{1-t^2}$. We obtain that (This is essentially the calculation in \cite[p.511]{Elliott}):
\begin{gather*}
 \sum_{p \leq N} \frac{\abs{a(p)}}{p} \sim 2 \times \frac 2 {\pi} \int_{-1}^1 |t| \sqrt{1-t^2} dt  \times \log \log N \sim \frac 8 {3\pi} \log \log N. 
\end{gather*}
This result and Theorem 19 allows us to improve Corollary 2 in the case of holomorphic cusp forms. Let
\begin{gather} \label{ajj}
   H(s)= \sum_{n=1}^\infty a(n) n^{-s}=\prod_{p}(1-a(p) p^{-s}+p^{-2s})^{-1}
\end{gather}
be the Hecke L-series that corresponds to the cusp form $f(z)$ defined by Eq. \eqref{cuspform}. Then we have the following Corollary

\begin{cor} 
 Let  $H(s)$ be the Hecke L-series attached to a holomorphic new-form of weight $\geq 2$ (Defined by Eqs. \eqref{cuspform} and \eqref{ajj}). Then
\begin{gather}
  \delta^{1+8/(3 \pi)+\epsilon} \ll_\epsilon \int_T^{T+\delta} \abs{H(1+it)}dt \ll_\epsilon  \delta^{1-8/(3 \pi)-\epsilon}, \qquad (0<\delta \leq 1)   
\end{gather}
Furthermore
 \begin{gather*}
  \liminf_{T \to \infty} \int_T^{T+\delta} \abs{H(1+it)}dt \ll_{\epsilon} \delta^{1+8/(3 \pi)-\epsilon}.
 \end{gather*}
\end{cor}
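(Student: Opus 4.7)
I would split the statement into two separate pieces: the two-sided bound, which should fall directly out of Theorem 19, and the sharper $\liminf$ upper bound, which requires constructing an extremal Dirichlet series analogous to $\zeta_\delta$ and invoking the universality-type result of Lemma 5. For the two-sided bound, I would first write the Euler product of $H$ in the Deligne parametrisation $H(s)=\prod_p (1-\alpha_p p^{-s})^{-1}(1-\beta_p p^{-s})^{-1}$ with $|\alpha_p|=|\beta_p|=1$ and $\alpha_p+\beta_p=a(p)$, so that
\begin{gather*}
\log H(s)=\sum_{p}\sum_{k\ge 1}\frac{\alpha_p^k+\beta_p^k}{k\,p^{ks}}.
\end{gather*}
The $k\ge 2$ contribution is uniformly bounded on $\Re(s)\ge 1$ because $|\alpha_p^k+\beta_p^k|\le 2$ and $\sum_p p^{-2}<\infty$; the $k=1$ part has coefficient $a(p)$ at the prime $p$. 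The Sato–Tate computation displayed just before the statement gives $\sum_{p\le N}|a(p)|/p=\frac{8}{3\pi}\log\log N+O(1)$. Combining these, the coefficients $a_n$ of $\log H(s)$ satisfy the hypothesis of Theorem 19 with any $\alpha=\tfrac{8}{3\pi}+\epsilon$, and Theorem 19 immediately delivers the two-sided bound in Corollary 3.

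For the $\liminf$ statement I would construct a Hecke analogue of $\zeta_\delta$ by twisting each local factor:
\begin{gather*}
H_\delta(s)=\prod_p (1-\varepsilon_p\alpha_p p^{-s})^{-1}(1-\varepsilon_p\beta_p p^{-s})^{-1},\qquad \varepsilon_p=\operatorname{sign}\sin(\delta\log p/2).
\end{gather*}
By a termwise application of Lemma 1 to each pair of factors, this choice of twists minimises $\int_{-\delta/2}^{\delta/2}\log|H_\delta(1+it)|\,dt$ among all unimodular twists of $H$. I would then prove a variant of Lemma 3 and Proposition 2 for $H_\delta$: a Mellin–Perron contour shift, using the zero-free region for the symmetric-square $L$-function (the only Rankin–Selberg square appearing in $\log H\cdot\overline{\log H}$), yields an asymptotic of the form
\begin{gather*}
\log|H_\delta(1+it)|=\tfrac{8}{3\pi}\log\delta+O(1)+O_\epsilon(\delta^{-\epsilon/2}\cdot\text{small})
\end{gather*}
uniformly in $t\in[-\delta/2,\delta/2]$ — the key point, as in Remark 2, being that the imaginary part of the logarithm drops out when taking absolute values, so $|H_\delta(1+it)|\asymp \delta^{8/(3\pi)+o(1)}$ is essentially constant on the short interval. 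Granted this, the $\liminf$ half of Corollary 3 follows by applying Lemma 5 with $A(s)=H(s)$ and $B(s)=H_\delta(s)$, which is legitimate because Hecke $L$-functions of holomorphic newforms satisfy the mean-square hypothesis \eqref{msp} inside some strip $\sigma_1<\Re s$ by standard convexity and Rankin–Selberg estimates. One picks $T$ with $\max_{t\in[-\delta/2,\delta/2]}|H(1+iT+it)-H_\delta(1+it)|<\delta^{1+8/(3\pi)}$ and then the triangle inequality gives $\int_{T-\delta/2}^{T+\delta/2}|H(1+it)|\,dt\ll\delta^{1+8/(3\pi)-\epsilon}$.

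The main obstacle is the quantitative approximate-constancy statement for $|H_\delta(1+it)|$ on $[-\delta/2,\delta/2]$ — this is the true Hecke analogue of Proposition 2. For $\zeta_\delta$ the proof used the classical prime number theorem with de la Vallée-Poussin error term to justify the contour shift in Lemma 3. For $H_\delta$, one needs the same machinery applied to $L(s,\operatorname{Sym}^2 f)$ (and, strictly speaking, to $L(s,f\otimes\bar f)$) together with effective Sato–Tate; any polynomial-in-$\delta^{-1}$ error of the form $O(\delta^{-\epsilon'})$ is harmless because the final statement already carries $\delta^{-\epsilon}$. The remaining verifications — Deligne's bound, the mean-square estimate needed for Lemma 5, and the holomorphy of $\operatorname{Sym}^2 f$ in a strip — are standard, so the whole argument hinges on writing down Lemma 3 and Proposition 2 for $H_\delta$ with a usable error term, after which Corollary 3 is a direct assembly of Theorem 19 (first part) and Lemma 5 plus the analogue of Lemma 4 (second part).
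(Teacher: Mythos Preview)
Your treatment of the two-sided bound via Theorem~19 is correct and is exactly the paper's route (though note: the displayed Sato--Tate computation just before the Corollary gives only $\sim\frac{8}{3\pi}\log\log N$, not $+\,O(1)$; this is precisely why the $\epsilon$ appears and cannot currently be removed --- see Problem~3 and the surrounding discussion).

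The $\liminf$ argument has a genuine gap: the twist $c_p=\varepsilon_p$ does \emph{not} make $|H_\delta(1+it)|$ small on the short interval. To first order in $1/p$,
\[
\log|H_\delta(1+it)|=\sum_p\frac{\varepsilon_p\,a(p)\cos(t\log p)}{p}+O(1),
\]
and since the real Hecke eigenvalues $a(p)$ have mean zero under Sato--Tate and are essentially uncorrelated with the sign $\varepsilon_p$ (which depends only on $\log p \bmod 4\pi/\delta$), this sum is $O(1)$ uniformly in $\delta$ --- indeed $\sum_p a(p)/p$ itself converges. So your $H_\delta$ satisfies $|H_\delta(1+it)|\asymp 1$, not $\delta^{8/(3\pi)}$. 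The error is in invoking Lemma~1 ``for each pair of factors'': Lemma~1 handles a \emph{single} factor $(1-xe^{i(\theta+t)})$, whereas at each prime $H$ has two factors with conjugate Satake parameters that must be twisted by the \emph{same} $c_p$, and their individual Lemma~1 optima are incompatible. The twist that actually minimises $\int_{-\delta/2}^{\delta/2}\log|H_{\mathrm{twist}}(1+it)|\,dt$ to leading order is $c_p=-\operatorname{sign}(a(p))\,\varepsilon_p$, which yields
\[
\log|H_{\mathrm{opt}}(1+it)|=-\sum_p\frac{|a(p)|\,\varepsilon_p\cos(t\log p)}{p}+O(1)\;=\;\tfrac{8}{3\pi}\log\delta+o(|\log\delta|).
\]
This is the analogue of the local optimisation encoded in $\lambda_0,\lambda_1$ in Theorem~9. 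The approximate-constancy step for $|H_{\mathrm{opt}}|$ then requires an asymptotic for $\sum_{p\le x}|a(p)|$, which needs the \emph{full} Sato--Tate theorem (all symmetric powers), not merely the zero-free region for $\operatorname{Sym}^2 f$ --- the latter controls $\sum a(p)^2/p$, not $\sum|a(p)|/p$. With the corrected twist and the correct analytic input, the remainder of your plan (Lemma~5 and the triangle inequality) does go through and gives the $\liminf$ bound.
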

We remark that $8/(3 \pi)=0.848826...$. Elliot-Moreno-Shahidi \cite{Elliott} proves  related results under a somewhat sharper variant of the Sato-Tate conjecture that would allow us to remove the $\epsilon$ in Corollary 3. What they proved under this somewhat sharper version of Sato-Tate is the following
\begin{gather} \label{expst}
 \sum_{p \leq N} \frac{\abs{a_p}}{p} = \frac 8 {3\pi} \log \log N + O(1). 
\end{gather}
 This would follow if we would have a sufficiently good estimates (explicit in $n$) for the symmetric $n$'th power $L$-functions close to $\Re(s)=1$.  
We remark that a conjecture of   Akiyama-Tanigawa \cite{AkiTan} proposed for elliptic curves $L$-functions (but it should be possibly to state for other $L$-functions as well)  would also imply this, and in fact would under a general version of the Riemann hypothesis yield the much stronger error term $\Oh{N^{-1/2+\epsilon}}$ in Eq. \eqref{expst}.  For a discussion about this, see the survey of Mazur \cite{Mazur}.  We therefore suggest the following problem:
\begin{prob}
  Prove that it is possible to remove $\epsilon$ in Corollary 3.
\end{prob}
The error term $O(\log \log N)^{-2}$ in \eqref{expst} would by Theorem 10 yield even sharper results. In fact we believe the following to be true (in particular it would follow from Akiyama-Tanigawa's conjecture):
\begin{conj}
 Let $a_n$ be the Fourier coefficients for a modular form. Then
\begin{gather*} 
 \sum_{p \leq N} \frac{\abs{a_p}}{p} = \frac 8 {3\pi} \log \log N + B+ O(\log \log N)^{-2}. 
\end{gather*}
 for some constant B (depending on the form).
\end{conj}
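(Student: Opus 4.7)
The plan is to expand $|a_p|$ into a series of Hecke eigenvalues of symmetric power $L$-functions and apply a prime number theorem with error term to each term separately.

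By the Sato-Tate theorem of Barnet-Lamb--Geraghty--Harris--Taylor, write $a_p = 2\cos\theta_p$ with $\theta_p\in[0,\pi]$, and work in $L^2([0,\pi],d\mu_{ST})$ where $d\mu_{ST}=\tfrac{2}{\pi}\sin^2\theta\,d\theta$. The Chebyshev polynomials of the second kind, $U_n(\cos\theta)=\sin((n+1)\theta)/\sin\theta$, form an orthonormal basis. First I would compute the Fourier coefficients
\begin{gather*}
 c_n = \int_0^\pi 2|\cos\theta|\,U_n(\cos\theta)\,\tfrac{2}{\pi}\sin^2\theta\,d\theta,
\end{gather*}
in closed form; a direct calculation gives $c_0=8/(3\pi)$, $c_n=0$ for odd $n\geq 1$, and $c_{2k}$ of order $k^{-2}$ (this polynomial decay, rather than anything faster, is precisely what forces the conjectured exponent $-2$ on the error). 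Since $U_n(a_p/2)=\lambda_{\mathrm{sym}^n f}(p)$ is the Hecke eigenvalue at $p$ of the $n$-th symmetric power $L$-function of $f$, one has the formal identity $|a_p|=\sum_{n\geq 0}c_n\,\lambda_{\mathrm{sym}^n f}(p)$.

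Next I would apply, for each fixed $n\geq 1$, the prime number theorem for $L(s,\mathrm{sym}^n f)$. The automorphy of all symmetric powers (Newton--Thorne, building on Taylor et al.\ cited in the paper) combined with non-vanishing on $\Re(s)=1$ and a classical de la Vall\'ee-Poussin zero-free region gives
\begin{gather*}
  \sum_{p\leq N}\frac{\lambda_{\mathrm{sym}^n f}(p)}{p}=B_n+O_n\bigl(\exp(-c_n\sqrt{\log N})\bigr),
\end{gather*}
while the $n=0$ contribution yields $c_0\cdot(\log\log N+M)=\tfrac{8}{3\pi}\log\log N+B_0$ by Mertens. Setting $B=\sum_n c_n B_n$ and truncating the expansion at level $N_0\asymp(\log\log N)^{3}$, the Deligne bound $|\lambda_{\mathrm{sym}^n f}(p)|\leq n+1$ together with the decay $c_n\ll n^{-2}$ controls the tail $\sum_{n>N_0}|c_n|(n+1)\log\log N\ll (\log\log N)^{-2}$ as required.

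The main obstacle is \emph{uniformity in $n$}. The analytic conductor of $L(s,\mathrm{sym}^n f)$ grows like $(\mathrm{cond}(f)\cdot(|t|+1))^{n+1}$, so the constants $c_n$ and $O_n(\cdot)$ in the PNT above degrade rapidly with $n$. To make the truncation at $N_0\asymp(\log\log N)^3$ go through unconditionally, one needs a zero-free region for the family $\{L(s,\mathrm{sym}^n f)\}$ whose width and implied constants depend only \emph{polynomially} on $n$; this is essentially a log-free zero density estimate for the family, which is beyond current technology. Under GRH the sharper Akiyama--Tanigawa bound $O(N^{-1/2+\epsilon})$ discussed in the paper immediately implies the conjecture, so Conjecture~1 should be viewed as a strong but unconditional substitute for GRH for the symmetric power family, of roughly the same depth as quantitative effective equidistribution in Sato--Tate with power-saving in the length of the Chebyshev expansion.
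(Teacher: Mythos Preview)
The statement you address is presented in the paper as a \emph{Conjecture}, not a theorem; the paper offers no proof and explicitly remarks that it would follow from the Akiyama--Tanigawa conjecture. There is therefore no proof in the paper to compare your proposal against. Your write-up is, appropriately, not a proof but an analysis of a natural strategy and of why it falls short, and your main conclusion---that the needed uniformity in $n$ for the prime number theorem across the family $L(s,\mathrm{sym}^n f)$ is currently out of reach---agrees with the paper's assessment.

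One technical point: your tail estimate does not work as written. With $c_n\ll n^{-2}$ (this rate is sharp, forced by the corner of $|\cos\theta|$ at $\theta=\pi/2$) and the Deligne bound $|\lambda_{\mathrm{sym}^n f}(p)|\le n+1$, the quantity $\sum_{n>N_0}|c_n|(n+1)\log\log N$ is of size $(\log\log N)\sum_{n>N_0}n^{-1}$, which diverges for every $N_0$. Repairing this would require a bound on $\sum_{p\le N}\lambda_{\mathrm{sym}^n f}(p)/p$ that is nontrivial uniformly in $n$---precisely the uniform input you already flagged as unavailable. So the tail control and the main-term uniformity collapse into the same obstruction, and your overall verdict that the conjecture is genuinely open is correct.
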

Then it follows that
\begin{thm}
  Let $a(n)$ be the Fourier coefficients for a modular form such that Conjecture is true. Then there exists some constants $C_1,C_2$ such that
  \begin{align*}
   \inf_T \max_{t \in [T,T+\delta]}  \abs{H(1+it)}  &= C_1 \delta^{8/(3 \pi)} (1+O(\delta^2)), \\
   \sup_T \min_{t \in [T,T+\delta]}  \abs{H(1+it)}  &= C_2 \delta^{-8/(3 \pi)} (1+O(\delta^2)). 
  \end{align*}
   The corresponding result for $L^p$ norm would also be true.
\end{thm}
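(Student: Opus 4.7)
The plan is to deduce this statement as a direct application of Theorem 10 to the Hecke $L$-series $H(s)=\prod_p(1-a(p)p^{-s}+p^{-2s})^{-1}$, taking $\alpha=8/(3\pi)$ and $\beta=B$ from the hypothesized asymptotic of the Conjecture. First I would verify the multiplicative Euler product hypotheses of Theorem 9: the local factor is $f_p(z)=(1-a(p)z+z^2)^{-1}$ with $|a(p)|\le 2$ by Deligne, so Deligne's bound $|a(p^k)|\le k+1$ yields
\begin{gather*}
\sum_{k\ge 2}\sum_{\substack{p\text{ prime}\\ p^k\ge N}}\frac{|a(p^k)|}{p^k}\ll \frac{1}{\sqrt N},
\end{gather*}
which is more than $O((\log\log N)^{-2})$. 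The Conjecture supplies the remaining required asymptotic $\sum_{p\le N}|a(p)|/p=(8/(3\pi))\log\log N+B+O((\log\log N)^{-2})$ with the sharper error term, so the hypothesis of Theorem 9 is met with $\alpha=8/(3\pi)<1$.

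Next I would check the additional requirements of Theorem 10. The Hecke $L$-function $H(s)$ is analytic in $\sigma>\sigma_1$, $|t|>T_0$ for some $\sigma_1<1$ (it is entire of order one for a cusp form), and the mean-square property \eqref{msp} is classical for cusp-form $L$-functions in a strip just to the left of $\Re(s)=1$, being a standard consequence of Phragmén--Lindel\"of interpolation between the trivial bound for $\sigma>1$ and known mean-square bounds on the critical line. Furthermore, on the circle $|z|=1/p$ with $p\ge 2$ the roots $\alpha_p^{-1},\beta_p^{-1}$ of $1-a(p)z+z^2$ lie on $|z|=1$ (since $|\alpha_p|=|\beta_p|=1$), hence $f_p$ has no zeros on $|z|=1/p$; so the second (sup-norm for $|H|$) half of Theorem 10 applies as well.

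Once the hypotheses are in place, Theorem 10 gives the two asymptotics with explicit constants
\begin{gather*}
C_1=\frac{e^{-\gamma+B-\lambda_0}}{4},\qquad C_2=4\,e^{\gamma-B+\lambda_1},
\end{gather*}
where $\lambda_0,\lambda_1$ are the absolutely convergent sums appearing in Theorem 9. Their convergence is immediate from the expansion $\log f_p(z)=a(p)z+O(p^{-2})$ on $|z|=1/p$, which makes each summand $O(p^{-2})$. The stated $L^p$ analogue then follows verbatim from the $L^p$ part of Theorem 10, and the powers $\delta^{\pm 8/(3\pi)}$ and the $(1+O(\delta^2))$ error term come directly from the values of $\alpha$ and the sharpness of the conjectured remainder $O((\log\log N)^{-2})$ (an $o(1)$ remainder would only yield $(1+o(1))$ in place of $(1+O(\delta^2))$, as the last sentence of Theorem 9 records).

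The main obstacle, conceptually trivial but technically the only nontrivial item, is confirming the mean-square property \eqref{msp} uniformly for $\sigma>\sigma_1$ with some $\sigma_1<1$; once that is granted, everything else is bookkeeping driven by the Euler product and the Conjecture. If the modular form is a newform on $\Gamma_0(N)$ rather than on the full modular group, the same argument goes through with only notational changes in the local factors at primes dividing the level.
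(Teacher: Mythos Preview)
Your proposal is correct and follows exactly the route the paper takes: the paper's proof of this theorem is the single line ``This follows from Theorem 10,'' and you have simply spelled out the verification of the hypotheses (Euler product bounds via Deligne, the conjectured prime asymptotic giving $\alpha=8/(3\pi)$ and $\beta=B$, analyticity, the mean-square property, and non-vanishing of the local factors on $|z|=1/p$). There is nothing to add.
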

\begin{proof} This follows from Theorem 10. \end{proof}

\section{Hilbert modular forms}
Recently Barnet-Lamb, Geraghty and Toby  \cite{BarneLamb} proved the Sato-Tate conjecture  for Hilbert modular forms by the same potential automorphy argument used for classical modular forms. Although we are not stating them here, by similar reasoning as in this paper analogues for Theorems 18, Theorem 20 and Corollary 3 can be found for Hecke L-series of Hilbert modular forms.

\def\cprime{$'$} \def\polhk#1{\setbox0=\hbox{#1}{\ooalign{\hidewidth
  \lower1.5ex\hbox{`}\hidewidth\crcr\unhbox0}}}


\begin{thebibliography}{10}

\bibitem{AkiTan}
S.~Akiyama and Y.~Tanigawa.
\newblock Calculation of values of {$L$}-functions associated to elliptic
  curves.
\newblock {\em Math. Comp.}, 68(227):1201--1231, 1999.

\bibitem{Andersson}
J.~Andersson.
\newblock Disproof of some conjectures of {K}. {R}amachandra.
\newblock {\em Hardy-Ramanujan J.}, 22:2--7, 1999.

\bibitem{Andersson2}
J.~Andersson.
\newblock Lavren{\cprime}ev's approximation theorem with nonvanishing
  polynomials and universality of zeta-functions.
\newblock {\em New Directions in Value-distribution Theory of Zeta and $L$-functions: Wurzburg Conference, October 6-10, 2008 (Berichte aus der Mathematik)}, December 31, 2009, pages 7-10.



\bibitem{Andersson6}
J.~Andersson.
\newblock Non universality on the critical line.
\newblock  Forthcoming.

\bibitem{Andersson4}
J.~Andersson.
\newblock On the Balasubramanian-Ramachandra method close to $\Re(s)=1$.
\newblock Forthcoming.

\bibitem{Andersson3}
J.~Andersson.
\newblock On a problem of {R}amachandra and approximation of functions by
  {D}irichlet polynomials with bounded coefficients.
\newblock Forthcoming.

\bibitem{Andersson111}
J.~Andersson.
\newblock On generalized Hardy classes of Dirichlet series.
\newblock Forthcoming.

\bibitem{Bagchi}
B.~Bagchi.
\newblock A joint universality theorem for {D}irichlet {$L$}-functions.
\newblock {\em Math. Z.}, 181(3):319--334, 1982.

\bibitem{Taylor}
T.~Barnet-Lamb, D.~Geraghty, M.~Harris, and R.~Taylor.
\newblock A family of {C}alabi-{Y}au varieties and potential automorphy II.
\newblock {\em P.R.I.M.S.} 47 (2011), 29-98. 



\bibitem{BarneLamb}
T.~Barnet-Lamb, D.~Geraghty, and G.~Toby.
\newblock The {S}ato-{T}ate conjecture for {H}ilbert modular forms
\newblock  47 (2011), 29-98. 
{\em Journal of the AMS}, 24 (2011), 411-469

\bibitem{Bohr}
H.~Bohr.
\newblock \"{U}ber eine quasi-periodische {E}igenschaft {D}irichletscher
  {R}eihen mit {A}nwendung auf die {D}irichletschen {$L$}-{F}unktionen.
\newblock {\em Math. Ann.}, 85(1):115--122, 1922.

\bibitem{BohrLandau1}
H.~Bohr and E.~Landau.
\newblock \"uber das {V}erhalten von $\zeta(s)$ and $\zeta_{\mathfrak r}(s)$ in
  der {N}\"ahe der {G}eraden $\sigma=1$.
\newblock {\em G\"ottinger Nachrichten}, pages 303--330, 1910.

\bibitem{BohrLandau2}
H.~Bohr and E.~Landau.
\newblock \"uber das {V}erhalten von $1/\zeta(s)$ auf der {G}eraden $\sigma=1$.
\newblock {\em G\"ottinger Nachrichten}, pages 71--80, 1923.

\bibitem{BohrLandau3}
H.~Bohr and E.~Landau.
\newblock Nachtrag zu unseren {A}bhandlungen aus den {J}ahrg\"angen 1910 und  1923.
\newblock {\em G\"ottinger Nachrichten}, pages 168--172, 1924.

\bibitem{TaylorST}
C.~Breuil, B.~Conrad, F.~Diamond, and R.~Taylor.
\newblock On the modularity of elliptic curves over {$\mathbf Q$}: wild 3-adic  exercises.
\newblock {\em J. Amer. Math. Soc.}, 14(4):843--939 (electronic), 2001.

\bibitem{Taylor3}
L.~Clozel, M.~Harris, and R.~Taylor.
\newblock Automorphy for some {$l$}-adic lifts of automorphic mod {$l$}
  {G}alois representations.
\newblock {\em Publ. Math. Inst. Hautes \'Etudes Sci.}, (108):1--181, 2008.
\newblock With Appendix A, summarizing unpublished work of Russ Mann, and
  Appendix B by Marie-France Vign{\'e}ras.

\bibitem{Poussin}
C.-J. de~la Vallée~Poussin.
\newblock Recherches analytiques la théorie des nombres premiers.
\newblock {\em Ann. Soc. scient. Bruxelles}, 20:183--256, 1896.

\bibitem{Deligne}
P.~Deligne.
\newblock La conjecture de {W}eil. {I}.
\newblock {\em Inst. Hautes \'Etudes Sci. Publ. Math.}, (43):273--307, 1974.

\bibitem{Elliott}
P.~D. T.~A. Elliott, C.~J. Moreno, and F.~Shahidi.
\newblock On the absolute value of {R}amanujan's {$\tau $}-function.
\newblock {\em Math. Ann.}, 266(4):507--511, 1984.


\bibitem{Ford}
K.~Ford.
\newblock Vinogradov's integral and bounds for the {R}iemann zeta function.
\newblock {\em Proc. London Math. Soc. (3)}, 85(3):565--633, 2002.



\bibitem{GarSteu}
R.~Garunk{\v{s}}tis and J.~Steuding.
\newblock On the roots of the equation $\zeta(s)=a$.
\newblock arXiv:1011.5339 [math.NT]



 
\bibitem{Sound}
A.~Granville and K.~Soundararajan.
\newblock Extreme values of {$\vert \zeta(1+it)\vert $}.
\newblock In {\em The {R}iemann zeta function and related themes: papers in
  honour of {P}rofessor {K}. {R}amachandra}, volume~2 of {\em Ramanujan Math.
  Soc. Lect. Notes Ser.}, pages 65--80. Ramanujan Math. Soc., Mysore, 2006.

\bibitem{Hadamard}
J.~Hadamard.
\newblock Sur la distribution des z\'eros de la fonction {$\zeta(s)$} et ses
  cons\'equences arithm\'etiques.
\newblock {\em Bull. Soc. Math. France}, 24:199--220, 1896.

\bibitem{Hildebrink}
T.~Hildebrink.
\newblock An arithmetical mapping and applications to $\Omega$-results for the Riemann zeta-function.
\newblock {\em Acta. Arith.}, 139.4, 341--367, 2009.

\bibitem{Taylor4}
M.~Harris, N.~Shepherd-Barron, and R.~Taylor.
\newblock A family of {C}alabi-{Y}au varieties and potential automorphy.
\newblock {\em Ann. of Math. (2)}, 171(2):779--813, 2010.


\bibitem{Holowinsky}
R.~Holowinsky.
\newblock A sieve method for shifted convolution sums.
\newblock {\em Duke Math. J.}, 146(3):401--448, 2009.

\bibitem{HolSound}
R.~Holowinsky and K.~Soundararajan.
\newblock Mass equidistribution for {H}ecke eigenforms.
\newblock {\em Ann. of Math. (2)}, 172(2):1517--1528, 2010.

\bibitem{Ivic2}
A.~Ivi{\'c}.
\newblock On the multiplicity of zeros of the zeta-function.
\newblock {\em Bull. Cl. Sci. Math. Nat. Sci. Math.}, (24):119--132, 1999.

\bibitem{Ivic}
A.~Ivi{\'c}.
\newblock {\em The {R}iemann zeta-function}.
\newblock Dover Publications Inc., Mineola, NY, 2003.
\newblock Theory and applications, Reprint of the 1985 original [Wiley, New
  York; MR0792089 (87d:11062)].

\bibitem{Kim}
H.~H. Kim, 
\newblock Functoriality for the exterior square of ${\rm GL}_4$GL4 and the symmetric fourth of ${\rm GL}_2$GL2. With appendix 1 by Dinakar Ramakrishnan and appendix 2 by Kim and Peter Sarnak.
\newblock {\em J. Amer. Math. Soc.} 16 (2003), no. 1, 139-–183 (electronic). 

\bibitem{KimShahidi}
Henry~H. Kim and Freydoon Shahidi.
\newblock Cuspidality of symmetric powers with applications.
\newblock {\em Duke Math. J.}, 112(1):177--197, 2002.

\bibitem{KimShahidi2}
Henry~H. Kim and Freydoon Shahidi.
\newblock Functorial products for {${\rm GL}_2\times{\rm GL}_3$} and the
  symmetric cube for {${\rm GL}_2$}.
\newblock {\em Ann. of Math. (2)}, 155(3):837--893, 2002.
\newblock With an appendix by Colin J. Bushnell and Guy Henniart.





\bibitem{Korobov}
N.~M. Korobov.
\newblock Estimates of trigonometric sums and their applications.
\newblock {\em Uspehi Mat. Nauk}, 13(4 (82)):185--192, 1958.

\bibitem{Lamzouri}
Y.~Lamzouri
\newblock The two dimensional distribution of values of $\zeta(1+it)$.
\newblock{ \em International Mathematics Research Notices IMRN (2008)} Vol. 2008, article ID rnn106, 48 pp. 

\bibitem{Laurincikas5}
A.~Laurin{\v{c}}ikas.
\newblock {\em Limit theorems for the {R}iemann zeta-function}, volume 352 of
  {\em Mathematics and its Applications}.
\newblock Kluwer Academic Publishers Group, Dordrecht, 1996.


\bibitem{Laurincikas1}
A.~Laurin{\v{c}}ikas.
\newblock  On the limit distribution of the Matsumoto zeta function {I}{I} 
\newblock  {\em Lith. Math. J.} 37 (1996), 371-387


\bibitem{Laurincikas2}
A.~Laurin{\v{c}}ikas.
\newblock A limit theorem in the theory of finite Abelian groups,
\newblock {\em Publ. Math. Debrecen.} 52 (1998), 144-159




\bibitem{Littlewood}
J.E. Littlewood.
\newblock On the function $1/\zeta(1+it)$.
\newblock {\em Proc. London Math. Soc}, 27:358--372, 1928.



\bibitem{Mazur}
B.~Mazur.
\newblock Finding meaning in error terms.
\newblock {\em Bull. Amer. Math. Soc. (N.S.)}, 45(2):185--228, 2008.

\bibitem{Motohashi}
Y.~Motohashi.
\newblock {\em Spectral theory of the {R}iemann zeta-function}, volume 127 of
  {\em Cambridge Tracts in Mathematics}.
\newblock Cambridge University Press, Cambridge, 1997.

\bibitem{Ramachandra2}
K.~Ramachandra.
\newblock A remark on {$\zeta(1+it)$}.
\newblock {\em Hardy-Ramanujan J.}, 10:2--8, 1987.

\bibitem{Ramachandra}
K.~Ramachandra.
\newblock {\em On the mean-value and omega-theorems for the {R}iemann
  zeta-function}, volume~85 of {\em Tata Institute of Fundamental Research
  Lectures on Mathematics and Physics}.
\newblock Published for the Tata Institute of Fundamental Research, Bombay,
  1995.

\bibitem{Rankin}
R.~A. Rankin.
\newblock Contributions to the theory of Ramanujan's function $\tau(n)$ and similar arithmetic functions. I. The zeros of the function $\sum_{n=1}^\infty \tau(n)/n^s$ on the line $\Re(s)=13/2$. II. The order fo the Fourier coefficients of integral modular forms.
\newblock {\em Proc. Cambridge Philos. Soc.}, 35:351--372, 1939.

\bibitem{Selberg2}
A.~Selberg.
\newblock Bemerkungen \"uber eine Dirchletsche Reihe, die mit der Theorie der Modulformen nahe verbunden ist.
\newblock {\em Arch. Math. Naturvid.}, 43:47--50, 1940.

\bibitem{Selberg}
A.~Selberg.
\newblock On the zeros of {R}iemann's zeta-function.
\newblock {\em Skr. Norske Vid. Akad. Oslo I.}, 1942(10):59, 1942.

\bibitem{Serre}
J-P.~Serre.
\newblock {\em Abelian {$l$}-adic representations and elliptic curves}.
\newblock McGill University lecture notes written with the collaboration of
  Willem Kuyk and John Labute. W. A. Benjamin, Inc., New York-Amsterdam, 1968.

\bibitem{Steuding}
J.~Steuding.
\newblock {\em Value-distribution of {$L$}-functions}, volume 1877 of {\em
  Lecture Notes in Mathematics}.
\newblock Springer, Berlin, 2007.

\bibitem{Taylor2}
R.~Taylor.
\newblock Automorphy for some {$l$}-adic lifts of automorphic mod {$l$}
  {G}alois representations. {II}.
\newblock {\em Publ. Math. Inst. Hautes \'Etudes Sci.}, (108):183--239, 2008.

\bibitem{Tenenbaum}
G.~Tenenbaum.
\newblock Remarques sur les valeurs moyennes de fonctions multiplicatives.
\newblock {\em Enseign. Math. (2)}, 53(1-2):155--178, 2007.

\bibitem{Titchmarsh}
E.~C. Titchmarsh.
\newblock {\em The theory of the {R}iemann zeta-function}.
\newblock The Clarendon Press Oxford University Press, New York, second
  edition, 1986.
\newblock Edited and with a preface by D. R. Heath-Brown.

\bibitem{Vinogradov}
I.~M. Vinogradov.
\newblock A new estimate of the function {$\zeta (1+it)$}.
\newblock {\em Izv. Akad. Nauk SSSR. Ser. Mat.}, 22:161--164, 1958.

\bibitem{Wiles}
A.~Wiles.
\newblock Modular elliptic curves and {F}ermat's last theorem.
\newblock {\em Ann. of Math. (2)}, 141(3):443--551, 1995.
\bibitem{Voronin1}
S.~M. Voronin.
\newblock The distribution of the nonzero values of the {R}iemann {$\zeta
  $}-function.
\newblock {\em Trudy Mat. Inst. Steklov.}, 128:131--150, 260, 1972.
\newblock Collection of articles dedicated to Academician Ivan Matveevi{\v{c}}
  Vinogradov on his eightieth birthday. II.

\bibitem{Voronin2}
S.~M. Voronin.
\newblock A theorem on the distribution of values of the {R}iemann
  zeta-function.
\newblock {\em Dokl. Akad. Nauk SSSR}, 221(4):771, 1975.

\bibitem{Voronin0}
S.~M. Voronin.
\newblock A theorem on the ``universality'' of the {R}iemann zeta-function.
\newblock {\em Izv. Akad. Nauk SSSR Ser. Mat.}, 39(3):475--486, 703, 1975.

\bibitem{Wu}
J.~Wu.
\newblock Power sums of {H}ecke eigenvalues and application.
\newblock {\em Acta Arith.}, 137(4):333--344, 2009.

\end{thebibliography}
\end{document}